\def\be{\begin{equation}}
\def\ee{\end{equation}}
\def\ra{\rightarrow}
\def\llra{\longleftrightarrow}
\def\0{^{\phantom 0}}
\def\9{_{\phantom q}}
\def\m{{\phantom -}}
\DeclareMathOperator\Tr{Tr}
\DeclareMathOperator\val{val}
\DeclareMathOperator\FP{FP}
\DeclareMathOperator\Id{Id}
\def\ov{\overline}
\def\ul{\underline}
\newtheorem{theorem}{Theorem}[section]
\newtheorem{corollary}[theorem]{Corollary}
\newtheorem{lemma}[theorem]{Lemma}
\newtheorem{proposition}[theorem]{Proposition}
\newtheorem{ineq}[theorem]{INEQ}
\theoremstyle{definition}
\newtheorem{remark1}[theorem]{Remark}
\newtheorem{definition1}[theorem]{Definition}
\newtheorem{example}[theorem]{Example}
\newtheorem{problem}[theorem]{Problem}
\newcommand{\R}{\mathbf R}
\newcommand{\A}{\mathbb A}
\newcommand{\E}{\mathcal E}
\newcommand{\ox}{x}
\newcommand{\B}{\mathcal B}
\newcommand{\Q}{\mathbf Q}
\newcommand{\Z}{\mathbf Z}
\newcommand{\C}{\mathbf C}
\newcommand{\PP}{[b_1,\ldots , b_{N},\overline{a_{1},\ldots ,a_{k}}]}
\newcommand{\p}{(b_1,\ldots ,b_{N}, a_1, \ldots a_{k})}
\newcommand{\q}{\mathfrak q}
\newcommand{\OO}{\mathcal O}
\newcommand{\X}{y_1, \ldots, y_N, x_1,\ldots , x_k}
\newcommand{\BP}{\mathbb P} 
\DeclareMathOperator\Rea{Re}
\DeclareMathOperator\N{N}
\DeclareMathOperator\Gal{Gal}
\DeclareMathOperator\SL{SL}
\DeclareMathOperator\SLT{\SL_{2}}
\def\be{\begin{equation}}
\def\ee{\end{equation}}
\def\ra{\rightarrow}
\def\llra{\longleftrightarrow}
\def\0{^{\phantom 0}}
\def\9{_{\phantom q}}
\def\m{{\phantom -}}
\DeclareMathOperator\Quad{Quad}
\def\ov{\overline}
\def\ul{\underline}
\newcommand{\ob}{\hat\beta}
\newcommand{\bs}{\beta^*}
\newcommand{\oA}{\overline{A}}
\DeclareMathOperator\GL{GL}
\begin{document}

\title{Periodic Continued Fractions over $\boldsymbol{S}$-Integers in Number Fields and Skolem's $\boldsymbol{p}$\,-adic Method }

\subjclass[2010]{Primary 20G30; Secondary 11C20}
\keywords{periodic continued fraction, Skolem's $p$-adic method, 
Ljunggren's equation, diophantine equations}

\author{Bradley~W.~Brock}
\address{Center for Communications Research, 805 Bunn Drive, Princeton,
NJ 08540-1966, USA}
\email{brock@idaccr.org}

\author{Noam~D.~Elkies}
\address{Mathematics Department, Harvard University, 1 Oxford Street,
Cambridge, MA 02138-2901, USA}
\email{elkies@math.harvard.edu}

\author{Bruce~W.~Jordan}
\address{Department of Mathematics, Baruch College, The City University
of New York, One Bernard Baruch Way, New York, NY 10010-5526, USA}
\email{bruce.jordan@baruch.cuny.edu}

\begin{abstract}
We  generalize the classical theory of periodic continued fractions
(PCFs) over $\Z$ to rings 
$\OO$ of $S$-integers in a number field.
Let $\B=\{\beta, \bs\}$ be the multi-set of roots of a quadratic
polynomial in $\OO[x]$. We show that PCFs
$P=[b_1,\ldots,b_N,\overline{a_1\ldots ,a_k}]$ of type $(N,k)$
potentially converging  to a limit in $\B$ are given by
$\OO$-points on an affine variety $V:=V(\B)_{N,k}$ 
generically of dimension $N+k-2$.  We give the equations
of $V$  in terms of the
continuant polynomials of Wallis and Euler. 
The integral points $V(\OO)$
are related to writing matrices in $\SLT(\OO)$ as
products of elementary
matrices.  We 
give an algorithm to determine if a PCF
converges and, if so, to compute its limit.

Our standard example generalizes the PCF
$\sqrt{2}=[1,\overline{2}]$ to
the $\Z_2$-extension of $\Q$: $F_n=\Q(\alpha_n)$,
$\alpha_{n}:=2\cos(2\pi/2^{n+2})$, with integers $\OO_n=\Z[\alpha_n]$.
We want to find the PCFs of $\alpha_{n+1}$ over $\OO_{n}$ of type 
$(N,k)$ by
finding the $\OO_{n}$-points on $V(\B_{n+1})_{N,k}$
for $\B_{n+1}:=\{\alpha_{n+1}, -\alpha_{n+1}\}$.
There are three types $(N,k)=(0,3), (1,2), (2,1)$ such that the
associated PCF variety $V(\B)_{N,k}$ is a curve; we analyze these curves.
For generic $\B$,
Siegel's theorem implies that each of these three $V(\B)_{N,k}(\OO)$
is finite.
We find all the $\OO_n$-points on these PCF curves
$V(\B_{n+1})_{N,k}$ for $n=0,1$.
When $n=1$ we make extensive use of Skolem's $p$-adic method for $p=2$,
including its application to Ljunggren's equation $x^2 + 1 =2y^4$.

\end{abstract}

\maketitle


\section{Introduction}

A {\em regular} (or {\em simple}) continued fraction
$[c_1,c_2,c_3,\ldots]$ is one where $c_i\in \Z$ with $c_i>0$ for $i>0$.
It is a classical fact that all regular continued fractions converge.
Every irrational real number has a unique representation
as a regular continued fraction.
By a celebrated theorem of Lagrange \cite{lagrange}, a real number $\alpha$ is
a quadratic irrational if and only if
its regular continued fraction is eventually periodic, that is, of the form
$P=\PP$ .
The eventually periodic continued fraction $P$ -- henceforth simply
called peri\-odic -- is of {\em type} $(N,k)$ and of {\em period} $k$.
A famous example of this is the continued fraction
$\sqrt{2}=[1,\overline{2}]$ of type $(1,1)$.

In this paper we generalize the theory of periodic continued
fractions (PCFs) from $\Z$ to the $S$-integers $\OO$ of a number field $K$.
We establish the foundations of periodic $\OO$-continued fractions
in terms of products of $2\times 2$ matrices over~$\OO$.
A PCF $P$ over $\OO$ (or an $\OO$-PCF) formally satisfies a quadratic polynomial
$Q(x)\in\OO[x]$ with multi-set of roots
$\B=\{\beta, \bs\}\subseteq \BP^1(\overline{\Q})$.
We say that $P$ has roots $\B$.
If $P$ actually converges to a limit $\hat{\beta}$, then $\hat{\beta}\in\B$.
Requiring that $P=[y_1,\ldots, y_N,\overline{x_1,\ldots , x_k}]$
formally satisfy 
$Q(x)=Ax^2+Bx+C\in\OO[x]$
with roots $\B$
defines a {\em PCF variety} 
\mbox{$V:=V(\B)_{N,k}\subseteq \A^{N,k}:=\A^N\times
\A^k$}  generically of dimension \mbox{$N+k-2$}.  
If $\alpha\in\OO$ but $\beta=\sqrt{\alpha}\notin \OO$, we write $V(\alpha)_{N,k}$ for $V(\{\beta,-\beta\})_{N,k}$.
We give the equations of these PCF varieties using the
continuant polynomials of Wallis \cite{wallis}*{p.~191} and Euler \cite{euler}*{pp.~103,106}.
An $\OO$-PCF $\PP$ with roots $\B$  corresponds to a point \mbox{$\p\in V(\OO)$}.
Unlike the classical theory of regular $\Z$-PCFs,
here $\beta$ can have infinitely many $\OO$-PCFs of type
$(N,k)$. This
leads to questions about the integral points on the varieties $V(\B)_{N,k}$ --
for example, are the $\OO$-points degenerate,
i.e., is their Zariski closure a proper subvariety?
As a first result on the geometry of $V(\B)_{N,k}$, we show that
it fibers over Fermat-Pell conic curves in Theorem \ref{FPfibration}.

But there is more than the algebraic theory: one also has to worry about
convergence, which is delicate.
For example, $[1,\overline{-1,2}]$ does not
converge, but $[1,\overline{-2,2}]$ does.
We formulate classical convergence conditions
as Algorithm \ref{con} which is easy to apply, deferring the proofs to an appendix.
The criteria for convergence involve both algebraic conditions and inequalities.
For {\em any \/}$\B$ we define an affine variety $V_{N,k}$ in Definition
\ref{pint} which is a divisor on $V(\B)_{N,k}$ having the property that all
points on $V_{N,k}$ correspond to {\em divergent} continued fractions,
cf. Corollary \ref{uber}.
Finding $\OO$-PCFs for $\beta$ then
entails a two-step process: 1) Find the points $(b_1,\ldots , b_N,
a_1,\ldots , a_k)$ on $V(\B)_{N,k}(\OO)$.
2) Determine whether the $\OO$-PCF $[b_1,\ldots
,b_N,\overline{a_1,\ldots ,a_k}]$ converges to $\beta$ 
using Algorithm \ref{con}.

We generalize the prototypical continued fraction
$\sqrt{2}=[1,\overline{2}]$, which gives a $\Z$-point on
$V(2)_{1,1}$, to the $\Z_2$-extension $\bigcup_{n\geq 0} F_n$ of $\Q$
with $F_n=\Q(\alpha_n)$ and $\alpha_{n}=2\cos(2\pi/2^{n+2})$.
Note that $\alpha_{n+1}^2=2+\alpha_{n}$ and that
the integers of $F_{n}$ are $\OO_{n}=\Z[\alpha_{n}]$ for $n\geq 0$.
Our problem is to find the $\Z[\alpha_n]$-PCFs of type $(N,k)$
for  $\alpha_{n+1}$, with $\sqrt{2}=[1,\overline{2}]$ corresponding to
$n=0$ and $(N,k)=(1,1)$.  Hence we have:
\begin{problem}
\label{problem}
Find $V(2+\alpha_n)_{N,k}(\OO_n)$, $n\geq 0$.
\end{problem}
\noindent In Section \ref{drive} we analyze the easy case of the three 
PCF varieties of dimension less than $1$, solving Problem \ref{problem}
for
$(N,k)=(0.1),\,(0,2),\, (1,1)$.

The second half of the paper -- Sections \ref{toady1}, \ref{ghost},
and \ref{sec:integral} -- studies the diophantine geometry of the three PCF varieties
which are curves, namely $V(\B)_{N,k}$ with $(N,k)=(0,3)$, $(2,1)$, and $(1,2)$.
We prove in Theorems~\ref{great}, \ref{rentdue}, and~\ref{tadpole}
that for generic $\B$ the $\OO$-points
on these PCF curves are finite in number by applying Siegel's theorem.
We solve Problem \ref{problem} for
$(N,k)= (2,1)$, $(1,2)$, and $(0,3)$ (the cases of curves) and $n=0,1$.
These curve examples amply illustrate the arithmetic richness of these varieties,
with the case $n=1$ not surprisingly giving the greatest difficulty.
The one break we get is that $V(2+\sqrt{2})_{2,1}(\Z[\sqrt{2}])=\emptyset$ by a
simple congruence argument -- so there are no periodic $\Z[\sqrt{2}]$-continued
fractions
of $\sqrt{2+\sqrt{2}}$ of type $(2,1)$, cf. Proposition \ref{grunt}.
We use Skolem's $p$-adic method \cites{skolem2,skolem} for $p=2$ to find
$V(2+\sqrt{2})_{0,3}(\Z[\sqrt{2}])$ and $V(2+\sqrt{2})_{1,2}(\Z[\sqrt{2}])$.
Skolem's method does not always apply in diophantine problems, and it does not
always work even when it applies, but on these PCF curves over the
$\Z_2$-extension of $\Q$ it is effective.
The argument for $V(2+\sqrt{2})_{1,2}$ in Section \ref{sec:integral} is
particularly involved since Skolem does not apply directly
but only after passing to a cover.

An early (1942) application of the $p$-adic method was 
Ljunggren's famous result \cite{ljunggren} solving
$x^2+1 =2y^4$ over $\Z$ -- the only solutions are 
$(x,y)= (\pm 1,\pm 1)$ and $(x,y)=(\pm 239, \pm 13)$.
We use Ljunggren two separate times--once for
$V(2+\sqrt{2})_{0,3}$ and once for $V(2+\sqrt{2})_{1,2}$.
Indeed, the appearance of $239+169\sqrt{2}$ in the periodic continued fraction 
\[
\sqrt{2+\sqrt{2}}=[\overline{57-39\sqrt{2}, 239+169\sqrt{2}, -733+52\sqrt{2}}]
\]
of type $(0,3)$ in Corollary \ref{rinds} is certainly suggestive of Ljunggren.
Theorem \ref{trails} gives the  sixteen  $\Z[\sqrt{2}]$-points on $V(2+\sqrt{2})_{0,3}$ 
with the eight $\Z[\sqrt{2}]$-PCFs of $\alpha_2=\sqrt{2+\sqrt{2}}$
of type $(0,3)$ given in Corollary \ref{rinds}.
Likewise Theorem \ref{tips} finds the twenty $\Z[\sqrt{2}]$-points on (a component of)
$V(2+\sqrt{2})_{1,2}$ with the ten $\Z[\sqrt{2}]$-PCFs
for $\alpha_{2}$ of type $(1,2)$ given in Corollary \ref{pot}.
In each case we prove there are no others.
The integral points we find are interesting: one
could hardly guess the very slowly converging periodic continued fraction
\begin{equation*}
    \sqrt{2+\sqrt{2}}=\left[442+312\sqrt{2},\,\overline{-298532+211094\sqrt{2},\,884+624\sqrt{2}}\right]
\end{equation*}
of type $(1,2)$ appearing in Corollary \ref{pot}.

\section{Periodic Continued Fractions}

Suppose $c_i\in\C$, $i\ge 1$.
A finite continued fraction
$F:=[c_1,c_2, c_3,\ldots, c_n]$ with {\em partial quotients} $c_i$
is defined inductively as follows.
\begin{align}
  [c_1]&=c_1,\nonumber\\
  [c_1,c_2]&=c_1 + 1/c_2,\text{ and}\nonumber\\
  [c_1,c_2,c_3,\ldots, c_n]&=[c_1,[c_2, c_3, \ldots , c_n]]\nonumber\\
&=c_1+\cfrac{1}{c_2+\cfrac{1}{c_3+\cfrac{1}{c_4
+\cfrac{1}{\raisebox{-1.2em}{\ensuremath{\ddots\quad}}\raisebox{-2.4em}{\ensuremath{c_{n-1}+\cfrac{1}{c_n}}}}}}}
\in\BP^1(\C)\label{peanut}.
\end{align}
In other words, if we define an automorphism of $\BP^1$ by
$\phi_c(z)=c+1/z$ and put
\begin{equation*}
\phi_F:=\phi_{c_1}\circ\phi_{c_2}\circ\cdots\circ\phi_{c_n},
\end{equation*}
then $F=\phi_F(\infty)$.
For $\alpha\in\C$ define the matrix
\begin{equation}
\label{goon}
D(\alpha):=\left[\begin{matrix}\alpha &1\\1&0\end{matrix}
\right],\quad\mbox{and set }t:=D(0);\quad\mbox{note that }
D(\alpha)^{-1}=tD(-\alpha)t=\left[\begin{matrix}0&1\\1&-\alpha \end{matrix}\right].
\end{equation}

For a matrix $A=\left[\begin{smallmatrix}a&b\\c&d\end{smallmatrix}\right]\in\GL(2,\C)$ define
the automorphism $\oA$ of $\BP^1$ by
\[
\oA:z\mapsto \frac{az+b}{cz+d}
\]
for $z\in\BP^1(\C)$.
With this notation
\[
\phi_c=\overline{D(c)} .
\]
For $\beta\in\C$, set
\begin{equation*}
\vec{v}_{\beta}:=\left[\begin{matrix}\beta\\1\end{matrix}\right]
\quad\mbox{and}\quad
\vec{v}_{\infty}:=\left[\begin{matrix}1\\0\end{matrix}\right].
\end{equation*}
As usual we identify $z\in\BP^{1}(\C)$ with $\vec{v}_{z}$.

\begin{definition1}\label{hartebeest}

Let $F=[c_1,c_2, c_3,\ldots, c_n]$ be a finite continued fraction.
Define
\begin{equation*}
M(F)=\left[\begin{matrix}p_n&p_{n-1}\\q_n&q_{n-1}\end{matrix}\right]
=\prod_{i=1}^nD(c_i)=D(c_1)D(c_2)\cdots D(c_n).
\end{equation*}
\end{definition1}
\noindent Explicit formulas for $p_n$ and $q_n$ are known classically;
we give them later in \eqref{jello}.

 So $\phi_F=\overline{M(F)}$,
and $F=p_n/q_n$ in terms of the $c_i$.
Furthermore, using \eqref{goon}
\begin{align*}
M(F)^{-1}&=\prod_{i=0}^{n-1}[tD(-c_{n-i})t]
=t\left[\prod_{i=0}^{n-1}D(-c_{n-i})\right]t\\ 
&=M([0,-c_n,\ldots,-c_1,0])\nonumber
\end{align*}
gives the inverse.
Note that
\begin{equation*}
\det M(F)=\det M(F)^{-1}=(-1)^{n}.
\end{equation*}

We define the infinite continued fraction $C:=[c_1,c_2, c_3,c_4,\ldots]$
to be a formal expression as in \eqref{peanut}, but which does not terminate.
We define $M_n(C)=M([c_1,c_2, c_3,\ldots,c_n])$ for $n\ge1$, $M_0(C)=I$,
and its {\em convergents} to be
$[c_1,c_2, c_3,\ldots,c_n]=M_n(C)_{11}/M_n(C)_{21}\in\BP^1(\C).$
The {\em value} of $C$ is
\begin{equation*}
 \ob([c_1,c_2,c_3,c_4,\ldots])=\lim_{n\to\infty}[c_1,c_2,c_3,\ldots, c_n]
\end{equation*}
if the limit exists.
In the theory of {\em regular} (or {\em simple}) continued fractions for
approximating real numbers, the $c_i\in\Z$ are positive for $i>1$,
and the limit always exists.
We say that $C$ is over a ring $\OO\subseteq \C$
(or that it is an $\OO$-continued fraction)
if $c_i\in\OO$ for all $i$.

The symbol $[c_1,c_2, c_3, \ldots]$ is often used
to denote both the continued fraction and its value if it exists, but
we have chosen to restrict this abuse of notation to the finite case
or when all the $c_i$'s are explicit real numbers.
In addition we write equality between two continued
fractions if they are connected by formal manipulations that leave the
value unchanged if it exists.
For example, the following lemma says that we can write
\begin{equation*}
  [\ldots,a,0,b,\ldots]=[\ldots,a+b,\ldots].
\end{equation*}
\begin{lemma}
\label{caribou}
Let $C=[\ldots,a,0,b,\ldots]$ be a finite or infinite continued fraction,
one of whose partial quotients is $0$.
The value $\ob(C)$ exists if and only if $\ob([\ldots,a+b,\ldots])$ exists,
in which case the two values are equal.
\end{lemma}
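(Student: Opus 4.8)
The plan is to reduce the statement to a purely algebraic identity about the matrices $D(c_i)$ together with the observation that the convergents of $C$ and of the contracted fraction agree from some point on. The key is the factorization $D(a)D(0)D(b) = D(a)tD(b)$. A direct computation gives
\[
D(a)\,t\,D(b)=\left[\begin{matrix}a&1\\1&0\end{matrix}\right]\left[\begin{matrix}0&1\\1&0\end{matrix}\right]\left[\begin{matrix}b&1\\1&0\end{matrix}\right]=\left[\begin{matrix}1&a\\0&1\end{matrix}\right]\left[\begin{matrix}b&1\\1&0\end{matrix}\right]=\left[\begin{matrix}a+b&1\\1&0\end{matrix}\right]=D(a+b).
\]
So if $C=[c_1,\ldots,c_{j-1},a,0,b,c_{j+3},\ldots]$ with $a=c_j$, $0=c_{j+1}$, $b=c_{j+2}$, and $C'=[c_1,\ldots,c_{j-1},a+b,c_{j+3},\ldots]$ is the contracted fraction, then for every index $n$ past the deletion (i.e. every $n\ge j+2$ in $C$, corresponding to index $n-2$ in $C'$) we have $M_n(C)=M_{n-2}(C')$, since the product $\prod_{i=1}^n D(c_i)$ is unchanged after replacing the consecutive block $D(a)D(0)D(b)$ by $D(a+b)$ and leaving everything to its left and right untouched.

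The next step is to translate this into a statement about values. The value $\ob(C)$, when it exists, is $\lim_{n\to\infty}M_n(C)_{11}/M_n(C)_{21}$, and likewise for $C'$. Since the two sequences of matrices $\{M_n(C)\}_{n\ge j+2}$ and $\{M_m(C')\}_{m\ge j}$ are literally the same sequence (just reindexed by a shift of $2$), the corresponding sequences of convergents in $\BP^1(\C)$ are the same eventually, hence one converges if and only if the other does, and to the same limit. This handles the infinite case; the finite case is even simpler, since then $M(C)=M(C')$ exactly (not merely eventually) and both continued fractions have the literally equal value $M(C)_{11}/M(C)_{21}$, where we must interpret division in $\BP^1(\C)$ so the edge case $M(C)_{21}=0$ giving value $\infty$ is covered uniformly. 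I would also note the harmless boundary cases where the zero partial quotient is at the very start or end of a finite $C$ (e.g. $[0,b]$ or $[a,0]$): these are degenerate but the matrix identity still applies after interpreting the missing neighbor via the convention $\phi_F=\ov{M(F)}$ with $M(F)(\infty)$, or one simply excludes them since the lemma is invoked only in the interior.

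The main obstacle — really the only subtlety — is bookkeeping about $\BP^1(\C)$ rather than $\C$: one must be careful that ``the limit exists'' is meant in $\BP^1(\C)$, so that a convergent equal to $\infty$ (arising when some $M_n(C)_{21}=0$, e.g. $[\ldots,a,0]$ truncations) is permitted, and that the matrix identity $D(a)tD(b)=D(a+b)$ is used as an identity of elements of $\GL(2,\C)$ acting on $\BP^1$, not of fractional expressions. Once one commits to the projective viewpoint already set up in the paper (identifying $z$ with $\vec v_z$ and reading convergents off the first column of $M_n$), the equivalence of convergence and equality of values both follow immediately from the equality of the tail matrix sequences, with no inequalities or analytic estimates needed. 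I expect the write-up to be short: state the matrix identity, deduce $M_n(C)=M_{n-2}(C')$ for large $n$, and conclude.
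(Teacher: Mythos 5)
Your proof is correct and is essentially the paper's: the identity $D(a)D(0)D(b)=D(a+b)$ is just the matrix form of the paper's one-line observation $\phi_a(\phi_0(\phi_b(z)))=\phi_{a+b}(z)$, and your reindexing of the tail matrices $M_n(C)=M_{n-2}(C')$ is precisely the paper's remarked alternative that the convergents of $[\ldots,a+b,\ldots]$ are those of $[\ldots,a,0,b,\ldots]$ with two omissions. No substantive difference.
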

\begin{proof}
It suffices to observe that $\phi_a(\phi_0(\phi_b(z)))=\phi_{a+b}(z)$.
\end{proof}
\noindent Another way to prove this is to note that the convergents of
$[\ldots,a+b,\ldots]$ are the same as those of $[\ldots,a,0,b,\ldots]$
with two omissions.

\begin{definition1}

The infinite continued fraction $P=[c_1,c_2,c_3, \ldots]$ is {\em periodic}
if there are integers $N\ge0$ and $k\ge1$ so that $c_n=c_{n+k}$ for all
integers $n> N$.
We write
\begin{equation}
\label{gorilla}
  P:=[b_1,b_2,\ldots, b_{N}, \overline{a_{1},a_{2},\ldots, a_{k}}]
\end{equation}
for the infinite periodic continued fraction 
\[
[b_1,\ldots, b_N, a_1, \dots, a_k,a_1, \ldots, a_k, a_1,\ldots, a_k, \ldots ]=[c_1,c_2, c_3, \ldots ]
\]
if $c_n$ depends only on $n\bmod k$ for $n>N$,
and say $P$ has {\em type} $(N,k)$ and {\em period} $k$.
If $N$ and $k$ are both minimal, we say that $(N,k)$ is the
{\em minimal type} and $k$ is the {\em minimal period}.
We set $\A^{N,k}=\A^N\!\times\! \A^k\cong \A^{N+k}$.
The PCF $P$ in \eqref{gorilla}
determines a point $p=p(P)=\p\in\A^{N,k}$ and likewise
the point $p=\p\in \A^{N,k}$ determines the PCF
$P=P_{N,k}(p)=\PP$.
\end{definition1}

\begin{definition1}
\label{muskdeer}

  We set the following notation for
\[
P=\PP\quad\mbox{or}\quad p=\p\in\A^{N,k}:
\]
\begin{align}
  E_{N,k}(p)=E(P)&=\left[\begin{matrix} E_{11}(P)& E_{12}(P)\\ E_{21}(P) & E_{22}(P)\end{matrix}\right]
 = M([b_1,\ldots,b_N, a_{1}, \ldots, a_{k},0,-b_{N},\ldots,-b_1,0])\label{fed1}\\
 &= D(b_1)\cdots D(b_N)D(a_1)\cdots  D(a_{k})tD(-b_N)\cdots D(-b_1)t\nonumber\\
&=M([b_1,\ldots,b_{N}])M([a_{1},\ldots,a_{k}])M([b_1,\ldots,b_{N}])^{-1},
\nonumber\\
\Quad(P)(x)&:=\Quad_{N,k}(p)(x) = E_{21}(P)x^2+[E_{22}-E_{11}](P)x-E_{12}(P)
\mbox{ having }\nonumber\\ 
\B(P)&:=\B_{N,k}(p) =\{\beta,\bs\}\mbox{ as multi-set of roots so that }\nonumber\\
\Quad(P)(x)&=E_{21}(P)(x-\beta)(x-\bs)\mbox{ if }\infty\not\in\B(P).\nonumber\\
 P^*&=[b_1,b_2,\ldots, b_{N},0, \overline{-a_{k},-a_{k-1},\ldots,-a_{1}}]\nonumber\\
  &\hskip-4.5em =\begin{cases}[0, \overline{-a_{k},-a_{k-1},\ldots,-a_1}]&\text{if }N=0\nonumber\\
  [b_1,b_2,\ldots,b_{N-1}, b_{N}-a_{k},\overline{-a_{k-1},\ldots,-a_{1},-a_{k}}]&\text{if }N>0
\end{cases}\nonumber
\end{align}
using Lemma \ref{caribou}. We have $E_{11}(P), E_{12}(P), E_{21}(P), E_{22}(P)\in
\Z[b_1,\ldots ,b_N, a_1, \ldots a_k]$.

In terms of variables $(y_1, \ldots, y_N, x_1, \ldots, x_k)$ put
\begin{equation}
\label{foggy}
E_{N,k}=\left[\begin{matrix} E_{11}& E_{12}\\ E_{21} & E_{22}\end{matrix}\right]=
\begin{cases} M([y_1, \ldots, y_N])M([x_1,\ldots, x_k])M([y_1, \ldots, y_N])^{-1}\\
D(y_1)\cdots D(y_N)D(x_1)\cdots D(x_k)tD(-y_N)\cdots D(-y_1)t;
\end{cases}
\end{equation}
we have $E_{11}, E_{12}, E_{21}, E_{22}\in\Z[y_1, \ldots, y_N,x_1,\ldots, x_k]$.
\end{definition1}

\begin{remark1}
\label{rindss}
Note that $\det E_{N,k}(p)=(-1)^k$.
\end{remark1}

\noindent We call $\B=\B_{N,k}(P)$ the {\em roots} of $P$ or $p$.
Here we have the usual convention that $\infty$ is a root of $0x^2+Bx+C$
and a double root of $0x^2+0x+C$.
We leave $\B_{N,k}(P)$ undefined if $E_{N,k}(P)$
is a multiple of the
identity because this corresponds to $\Quad_{N,k}(P)=0$.
In Proposition \ref{ratify} we shall see that $E(P)$ does not depend on
the choice of $N$ but does depend on $k$.
In particular it can happen that $E(P)$ is defined for $k$ but not defined
for some multiple of $k$, corresponding to the case $G=0$
in Proposition \ref{ratify}.
This can only happen if $\ob(P)$ does not exist.
The PCF
$P^*$ is called the {\em dual} of $P$ (Galois \cite{galois}).
So if $N>0$, the dual of a PCF of type $(N,k)$
can be made to be of type $(N,k)$.
However, the dual of type $(0,k)$ is of type $(1,k)$.
The dual of the dual gives the original continued fraction because
$[\ldots,a,0,0,b,\ldots]=[\ldots,a,b,\ldots]$ by Lemma \ref{caribou}.
We shall see that the value $\hat{\beta}(P)$ of a 
PCF if it exists is a
root of the quadratic polynomial $\Quad  (P)$, and the value of the dual if it exists is the
other root.
Note that
\begin{align}
E(P^*)&=M([b_1,\ldots,b_{N}])M([a_{1},\ldots,a_{k}])^{-1}
  M([b_1,\ldots,b_{N}])^{-1}\nonumber \\
&=E(P)^{-1}=
(-1)^k\begin{bmatrix}E_{22}(P)& -E_{12}(P)\\ -E_{21}(P) & E_{11}(P)
\end{bmatrix}.\label{sitatunga}
  \end{align}
Consequently, $\Quad_{N,k}(P^*)=-(-1)^k\Quad_{N,k}(P)$ and $\B(P^*)=\B(P)$.

The matrix $E(P)$ plays a key role; it is almost true that the conjugacy
class of $E(P)$ determines the convergence behavior of $P$, the exception being
Theorem \ref{bactrian}\ref{bactrian2}.
The following straightforward linear algebra proposition applies to a
general $2\times2$ matrix $E$, which we shall apply to our matrix $E(P)$.

\begin{proposition}
\label{returned}
Let
$E=\left[\begin{smallmatrix}a&b\\c&d\end{smallmatrix}\right]$
with at least one of $a-d$, $b$, $c$ nonzero.
If $\beta$ is a root of 
\[
cx^2+(d-a)x-b=0 ,
\]
possibly infinite, then
the vector $\vec{v}_{\beta}$ is the unique eigenvector of $E$ up to scalars
with eigenvalue $a$ if $\beta=\infty$ and with eigenvalue $c\beta+d$ otherwise.
\end{proposition}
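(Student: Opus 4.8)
The plan is to verify directly that $\vec{v}_\beta$ is an eigenvector and then argue uniqueness by a dimension count. First I would treat the generic case $\beta \neq \infty$, so $\vec{v}_\beta = \left[\begin{smallmatrix}\beta\\1\end{smallmatrix}\right]$. Computing $E\vec{v}_\beta = \left[\begin{smallmatrix}a\beta + b\\c\beta + d\end{smallmatrix}\right]$, I want to show this equals $(c\beta+d)\vec{v}_\beta = \left[\begin{smallmatrix}(c\beta+d)\beta\\c\beta+d\end{smallmatrix}\right]$. The bottom entries already agree, so the content is the top-entry identity $a\beta + b = (c\beta+d)\beta$, i.e. $c\beta^2 + (d-a)\beta - b = 0$, which is exactly the hypothesis that $\beta$ is a root of $cx^2 + (d-a)x - b$. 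So $\vec{v}_\beta$ is an eigenvector with eigenvalue $c\beta + d$. For the case $\beta = \infty$, one has $\vec{v}_\infty = \left[\begin{smallmatrix}1\\0\end{smallmatrix}\right]$, and $\beta = \infty$ being a root of $cx^2 + (d-a)x - b$ means (by the stated convention) that $c = 0$; then $E\vec{v}_\infty = \left[\begin{smallmatrix}a\\c\end{smallmatrix}\right] = \left[\begin{smallmatrix}a\\0\end{smallmatrix}\right] = a\vec{v}_\infty$, giving eigenvalue $a$, consistent with "$c\beta + d$" read projectively/at infinity.

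Next I would establish uniqueness up to scalars. The hypothesis that at least one of $a - d$, $b$, $c$ is nonzero is precisely the statement that $E$ is not a scalar multiple of the identity: if $E = \lambda I$ then $a - d = 0$, $b = 0$, $c = 0$. A $2\times 2$ matrix that is not scalar has, for each eigenvalue, a one-dimensional eigenspace — equivalently, it cannot have a two-dimensional eigenspace for any $\lambda$, since that would force $E = \lambda I$. Therefore the eigenvector we exhibited spans the full eigenspace for its eigenvalue, which gives uniqueness up to scalars as claimed. I would phrase this cleanly: a non-scalar $2\times 2$ matrix has geometric multiplicity exactly $1$ for every eigenvalue.

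I do not anticipate a genuine obstacle here; this is elementary linear algebra, and the only mild subtlety is bookkeeping the $\beta = \infty$ / $c = 0$ degenerate case so that the phrase "eigenvalue $c\beta + d$" is interpreted correctly, together with checking that the quadratic $cx^2+(d-a)x-b$ is not identically zero (guaranteed again by the hypothesis on $a-d$, $b$, $c$), so that "root" is meaningful. I would write the proof in two short paragraphs: one computation establishing that $\vec{v}_\beta$ is an eigenvector with the stated eigenvalue (splitting off the $\beta = \infty$ case in a sentence), and one remark that the non-scalar hypothesis forces every eigenspace to be a line, yielding uniqueness.
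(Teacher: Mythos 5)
Your proof is correct. The paper gives no proof of this proposition at all --- it is introduced as ``the following straightforward linear algebra proposition'' and stated without argument --- so there is nothing to compare against; your two steps (the direct computation $a\beta+b=(c\beta+d)\beta \Leftrightarrow c\beta^2+(d-a)\beta-b=0$, with the $c=0$ convention handling $\beta=\infty$, followed by the observation that the non-scalar hypothesis forces every eigenspace of a $2\times 2$ matrix to be a line) are exactly the intended routine verification, and your attention to the degenerate case and to the quadratic not vanishing identically is sound.
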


\begin{example}
Suppose $P=[2,\overline{-2,4}]=\sqrt{2}$ with $(N,k)=(1,2)$.
Then
\begin{align*}
& E:=E(P)=\left[\begin{matrix} -3& -4\\ -2 & -3\end{matrix}\right]
=\begin{bmatrix} 2& 1\\ 1 & 0\end{bmatrix}
\begin{bmatrix} -2& 1\\ 1 & 0\end{bmatrix}
\begin{bmatrix} 4& 1\\ 1 & 0\end{bmatrix}
\begin{bmatrix} 2& 1\\ 1 & 0\end{bmatrix}^{-1},\\
\Quad_{1,2}(P)&=-2x^2+4,\ \B_{1,2}(P)=\{\sqrt{2},-\sqrt{2}\},\  P^*=[-2,\overline{2,-4}], \
\det E =1=(-1)^k,\\
&\hspace*{.38in}  E\vec{v}_{\!\mathsmaller{\sqrt{2}}}=(-2\sqrt{2}-3)
\vec{v}_{\!\mathsmaller{\sqrt{2}}},\ \mbox{and }  
E\vec{v}_{\!\mathsmaller{-\sqrt{2}}}=(2\sqrt{2}-3)
\vec{v}_{\!\mathsmaller{-\sqrt{2}}}.
\end{align*}
\end{example}

For any continued fraction $C=[c_1,c_2, c_3, \ldots]$ formal manipulation
shows that
\[[0,-c_{N},-c_{N-1},\ldots,-c_2,-c_1,0,C]=[c_{N+1},c_{N+2},c_{N+3},\ldots].\]
Hence, since our continued fraction $P$ is periodic of type $(N,k)$ we have
\begin{equation*}
P  = \left[b_1,\ldots, b_k,a_1, \ldots a_k,0,-b_{N},-b_{N-1},
\ldots,-b_2,-b_1,0,P\right].
\end{equation*}
Thus $P=\overline{E(P)}(P)$ by Equation \eqref{fed1}.  Since
\[
E(P)=E_{N,k}(P)=\begin{bmatrix}E_{11}(P)&E_{12}(P)\\E_{21}(P) & E_{22}(P)\end{bmatrix},
\]
we get
\[
\frac{E_{11}(P)P+E_{12}(P)}{E_{21}(P)P+E_{22}(P)}=P\quad\mbox{and hence}\quad
E_{21}(P)P^2+[E_{22}-E_{11}](P)P-E_{12}(P)=0.
\]
So $\Quad_{N,k}(P)(P)=0$
provided $\overline{E(P)}$ is not the identity.

We can write the entries of $E(P)$
explicitly using Euler's continuant polynomials  $K_n$ \cite{euler}.
Define $K_n$ recursively by
\begin{align}
K_{-2}=1,\quad K_{-1} &= 0, \quad K_0 =1,\nonumber\\
K_1(c_1)&=c_1,\nonumber\\
K_n(c_1,\ldots,c_n)&=K_{n-1}(c_1,\ldots,c_{n-1})c_n+K_{n-2}(c_1,\ldots,c_{n-2})\label{dog}\\
\text{or, equivalently, }K_n(c_1,\ldots,c_n)&=c_1K_{n-1}(c_2,\ldots,c_n)\nonumber
 +K_{n-2}(c_3,\ldots,c_n).
\end{align}
For example,
\begin{align*}
K_2(c_1,c_2)&=c_1c_2+1,\\
K_3(c_1,c_2,c_3)&=c_1c_2c_3+c_1+c_3,\\
K_4(c_1,c_2,c_3,c_4)&=c_1c_2c_3c_4+c_1c_2+c_1c_4+c_3c_4+1,\\
K_5(c_1,c_2,c_3,c_4,c_5)&=c_1c_2c_3c_4c_5+c_1c_2c_3+c_1c_2c_5+c_1c_4c_5+c_3c_4c_5\\
&\quad+c_1+c_3+c_5,\text{ etc.}
\end{align*}
The recursion relation \eqref{dog} is exactly what is needed to show
the identity
\begin{equation}
\label{lastly}
D(c_1)D(c_2)\cdots D(c_n)=\begin{bmatrix} K_n(c_1,\ldots c_n)& K_{n-1}(c_1,\ldots, c_{n-1})\\
K_{n-1}(c_2,\ldots, c_n)&K_{m-2}(c_2,\ldots,c_{n-1})\end{bmatrix}
\end{equation}
for $n\geq 0$ by induction.
This in turn gives formulas for $p_n$ and $q_n$ in Definition \ref{hartebeest}:
\begin{equation}
\label{jello}
p_n=K_{n}(c_1,\ldots,c_n)\quad\mbox{and}\quad q_n=K_{n-1}(c_2,\ldots,c_n).
\end{equation}
Continuant polynomials have the following properties:
\begin{align}
[c_1,c_2,\ldots, c_n]&=\frac{K_{n}(c_1,c_2,\ldots,c_n)}{K_{n-1}(c_2,c_3,\ldots, c_n)},\label{llama}\\
K_{n+1}(c_1,\ldots,c_n,0)&=K_{n-1}(c_1,\ldots,c_{n-1}),\nonumber \\ 
K_{m+n+1}(a_1,\ldots,a_m,0,b_1,\ldots,b_n)&=K_{m+n-1}(a_1,\ldots,a_m+b_1,\ldots,b_n)\label{guanaco}\\
=K_{m-1}(a_1,\ldots,a_{m-1})K_n(b_1,\ldots,b_n)&+
K_m(a_1,\ldots,a_m)K_{n-1}(b_2,\ldots,b_n).
\nonumber
\end{align}
Equations \eqref{llama} and \eqref{guanaco} give another proof of
Lemma \ref{caribou}.

We can use \eqref{lastly} to give the $2\times2$ matrix $E_{N,k}(P)$ in \eqref{fed1}:
\begin{proposition}
\label{rounded}
We have $E_{N,k}(P)
=\begin{bmatrix}E_{11}(P)&E_{12}(P)\\E_{21}(P) & E_{22}(P)\end{bmatrix}$  with
\begin{align*}
E_{11}(P)&=K_{2N+k+2}(b_1,\ldots, b_N, a_1, \ldots , a_{k},0,-b_{N},\ldots,-b_1,0)\nonumber\\
&=\begin{cases}
K_{k}(a_1,\ldots,a_{k})&\text{if }N=0,1,\\
K_{2N+k-2}(b_1,\ldots, b_N, a_1,\ldots a_{k}-b_{N},-b_{N-1}, \ldots,-b_2)&\text{if }N\ge2;
\end{cases}\\ 
E_{21}(P)&=K_{2N+k+1}(b_2,\ldots, b_N, a_1, \ldots, a_{k},0,-b_{N},\ldots,-b_1,0)\nonumber\\
&=\begin{cases}
K_{k-1}(a_2,\ldots,a_{k})&\text{if }N=0,1,\\
K_{2N+k-3}(b_2,\ldots , b_N, a_1, \ldots ,a_{k}-b_{N}, -b_{N-1}, \ldots,-b_2)
&\text{if }N\ge 2;
\end{cases}\\ 
E_{12}(P)&=K_{2N+k+1}(b_1,\ldots, b_N, a_1, \ldots, a_{k},0,-b_{N},\ldots,-b_1)
\nonumber\\
&=\begin{cases}
K_{k-1}(a_1,\ldots,a_{k-1})&\text{if }N=0,\\
K_{k+1}(b_1,a_1, a_2, \ldots,a_{k}-b_1)&\text{if }N=1,\\
K_{2N+k-1}(b_1,\ldots, b_N, a_1, \ldots,   a_{k}-b_{N},-b_{N-1}, \ldots,-b_1)
&\text{if }N\ge2;
\end{cases}\\ 
E_{22}(P)&=K_{2N+k}(b_2,\ldots , b_N, a_1,\ldots , a_{k},0,-b_{N},\ldots,-b_1)\nonumber\\
&=\begin{cases}
K_{k-2}(a_2,\ldots,a_{k-1})&\text{if }N=0,\\
K_{k}(a_1,\ldots, a_{k-1}, a_{k}-b_1)&\text{if }N=1,\\
K_{2N+k-2}(b_2,\ldots,b_N, a_1, \ldots, a_{k}-b_{N},-b_{N-1}, \ldots,-b_1)&
\text{if }N\ge2.
\end{cases}
\end{align*}
\end{proposition}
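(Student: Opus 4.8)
The plan is to derive each entry of $E_{N,k}(P)$ from the displayed-word formula \eqref{fed1} by first applying \eqref{lastly} to read off the entries of a product $D(c_1)\cdots D(c_m)$ as continuant polynomials, and then using the zero-insertion identities \eqref{guanaco} (together with $K_{n+1}(c_1,\ldots,c_n,0)=K_{n-1}(c_1,\ldots,c_{n-1})$) to collapse the internal zeros coming from the two factors of $t=D(0)$ in the word $[b_1,\ldots,b_N,a_1,\ldots,a_k,0,-b_N,\ldots,-b_1,0]$. Concretely, write $M(F)$ for the full product over this length-$(2N+k+2)$ word $c_1,\ldots,c_{2N+k+2}$; then \eqref{lastly} gives $E_{11}=K_{2N+k+2}(c_1,\ldots)$, $E_{12}=K_{2N+k+1}(c_1,\ldots,c_{2N+k+1})$ (dropping the trailing $c_{2N+k+2}=0$), $E_{21}=K_{2N+k+1}(c_2,\ldots,c_{2N+k+2})$, and $E_{22}=K_{2N+k}(c_2,\ldots,c_{2N+k+1})$. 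This already matches the first line given for each of the four entries, so the content of the proposition is the case analysis that follows.

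For the reduced forms I would treat the cases $N=0$, $N=1$, and $N\ge2$ separately, since the pattern of which zeros survive differs. When $N=0$ the word is simply $a_1,\ldots,a_k,0$ (the leading $b$-block and the final $t$ are absent, and $E(P)=M([a_1,\ldots,a_k])$ directly from the third line of \eqref{fed1}), so the four entries are just $K_k(a_1,\ldots,a_k)$, $K_{k-1}(a_1,\ldots,a_{k-1})$, $K_{k-1}(a_2,\ldots,a_k)$, $K_{k-2}(a_2,\ldots,a_{k-1})$ by \eqref{lastly}; this is the stated $N=0$ row. When $N=1$, there is a single $b_1$, and the interesting simplifications use \eqref{guanaco} to merge $b_1$ with $a_1$ on the left and $-b_1$ with $a_k$ on the right: e.g. for $E_{11}$ the word $b_1,a_1,\ldots,a_k,0,-b_1,0$ collapses — the trailing $0$ drops by the middle identity in \eqref{guanaco}, then the internal $0$ merges $a_k$ and $-b_1$ and cancels, leaving $K_k(a_1,\ldots,a_k)$, whence the $N=1$ value coincides with the $N=0$ value for $E_{11}$ and $E_{21}$ but genuinely differs for $E_{12}$ and $E_{22}$ (there the $-b_1$ is not adjacent to the dropped trailing zero, so one application of \eqref{guanaco} produces $K_{k+1}(b_1,a_1,\ldots,a_k-b_1)$ and $K_k(a_1,\ldots,a_k-b_1)$ respectively). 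For $N\ge2$ one applies \eqref{guanaco} twice: once to remove the trailing $0$ and merge $-b_1$ (resp. $-b_2$ for $E_{21},E_{22}$) into the preceding slot, and once more to remove the internal $0$ and merge $a_k$ with $-b_N$; carefully tracking the index bookkeeping — each application of \eqref{guanaco} with a zero between blocks of lengths $m$ and $n$ turns a $K_{m+n+1}$ into a $K_{m+n-1}$ — yields the stated $K_{2N+k-2},K_{2N+k-3},K_{2N+k-1},K_{2N+k-2}$ forms with the $a_k-b_N$ merge and the truncated tails $-b_{N-1},\ldots,-b_2$ or $-b_{N-1},\ldots,-b_1$.

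The main obstacle is purely clerical rather than conceptual: getting the index shifts and the exact surviving arguments right in the $N\ge2$ case, in particular keeping straight that $E_{21},E_{22}$ start the word at $c_2=b_2$ (so the final merged tail ends at $-b_2$, not $-b_1$) while $E_{11},E_{12}$ start at $c_1=b_1$, and that for $E_{11},E_{21}$ the trailing zero $c_{2N+k+2}$ is present (and must be removed) whereas for $E_{12},E_{22}$ it was already dropped. I would verify the bookkeeping against the small explicit continuants listed after \eqref{dog} and against the worked example $P=[2,\overline{-2,4}]$ (where $N=1,k=2$), checking that the formulas reproduce $E=\left[\begin{smallmatrix}-3&-4\\-2&-3\end{smallmatrix}\right]$. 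Everything else — the induction proving \eqref{lastly}, and the identities \eqref{guanaco} — is already established in the excerpt, so the proof reduces to these substitutions and a disciplined case split.
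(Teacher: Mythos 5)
Your overall strategy --- read off the four entries of the product over the word $[b_1,\ldots,b_N,a_1,\ldots,a_k,0,-b_N,\ldots,-b_1,0]$ via \eqref{lastly} and then collapse the zeros with \eqref{guanaco} --- is exactly the route the paper intends (it offers nothing beyond the sentence ``We can use \eqref{lastly} to give the $2\times2$ matrix $E_{N,k}(P)$''), and your treatment of the $N=0$ and $N\ge2$ cases, and of $E_{12},E_{22}$ when $N=1$, is sound.

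The gap is in your $N=1$ reduction of $E_{11}$ (and, symmetrically, $E_{21}$). You propose to drop the trailing zero of $b_1,a_1,\ldots,a_k,0,-b_1,0$ and \emph{then} merge $a_k$ with $-b_1$ across the internal zero. But the trailing-zero identity $K_{n+1}(c_1,\ldots,c_n,0)=K_{n-1}(c_1,\ldots,c_{n-1})$ deletes the last \emph{two} letters, so it already consumes the $-b_1$; after that there is nothing left to merge with $a_k$, and the surviving word $b_1,a_1,\ldots,a_k,0$ collapses to $K_k(b_1,a_1,\ldots,a_{k-1})$, not to $K_k(a_1,\ldots,a_k)$. (Merging first gives the same thing: $K_{k+2}(b_1,a_1,\ldots,a_{k-1},a_k-b_1,0)=K_k(b_1,a_1,\ldots,a_{k-1})$.) These are genuinely different polynomials, and the very consistency check you propose exposes the problem: for $P=[2,\overline{-2,4}]$ one has $E_{11}=-3=K_2(2,-2)=K_2(b_1,a_1)$, whereas $K_2(a_1,a_2)=K_2(-2,4)=-7$; likewise $E_{21}=-2=K_1(a_1)$ whereas $K_1(a_2)=4$. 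So the $N=1$ branch of the displayed formulas for $E_{11}$ and $E_{21}$ cannot be reached by your manipulation --- as printed it is contradicted by the paper's own worked example, the correct $N=1$ values being $K_k(b_1,a_1,\ldots,a_{k-1})$ and $K_{k-1}(a_1,\ldots,a_{k-1})$ (obtained, e.g., by conjugating $M([a_1,\ldots,a_k])$ by $D(b_1)$ and using the second recursion in \eqref{dog}). Your write-up needs to carry out the collapse legitimately and flag this discrepancy rather than force the manipulation to land on the stated expressions.
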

Once we establish that $P$ does not converge if $E$ is a multiple of the
identity in Theorem \ref{bactrian},
the following proposition will follow from the above discussion.
\begin{proposition}
If $P=\PP$
converges, then its value $\ob(P)\in\B_{N,k}(P)$ for any type
$(N,k)$ of $P$.
If $P$ is over $\OO$,
then $\ob(P)$ satisfies the quadratic polynomial $\Quad (P)\in\OO[x]$.
\end{proposition}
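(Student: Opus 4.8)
The plan is to reduce the statement to the computation $\Quad_{N,k}(P)(P)=0$ that was carried out informally just before Proposition \ref{rounded}, now made rigorous by invoking Theorem \ref{bactrian} to rule out the degenerate case. Concretely, fix a type $(N,k)$ of the periodic continued fraction $P=\PP$, and assume $P$ converges to a value $\ob=\ob(P)\in\BP^1(\C)$. First I would record the formal identity
\[
P=\left[b_1,\ldots,b_N,a_1,\ldots,a_k,0,-b_N,\ldots,-b_1,0,P\right],
\]
which holds because $P$ is periodic of type $(N,k)$, together with $P=\overline{E(P)}(P)$ coming from \eqref{fed1}. The point is to upgrade these \emph{formal} equalities of continued fractions to a genuine equality in $\BP^1(\C)$ once convergence is known.

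Next I would handle the convergence bookkeeping. If $P$ converges to $\ob$, then its convergents $[c_1,\ldots,c_n]$ tend to $\ob$; since deleting a fixed initial block and the block $0,-b_N,\ldots,-b_1,0$ only shifts and truncates the tail (cf. the formal manipulation $[0,-c_N,\ldots,-c_1,0,C]=[c_{N+1},c_{N+2},\ldots]$ and Lemma \ref{caribou}), the continued fraction on the right-hand side above also converges, and to the same value $\ob$. Because $\overline{E(P)}$ is a fixed Möbius transformation (with $\det E(P)=(-1)^k\neq0$ by Remark \ref{rindss}, hence $\overline{E(P)}$ is continuous on $\BP^1(\C)$), passing to the limit in the convergents of the right-hand continued fraction gives $\ob=\overline{E(P)}(\ob)$. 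Writing $E(P)=\left[\begin{smallmatrix}E_{11}&E_{12}\\E_{21}&E_{22}\end{smallmatrix}\right]$ this says
\[
\frac{E_{11}(P)\,\ob+E_{12}(P)}{E_{21}(P)\,\ob+E_{22}(P)}=\ob,
\]
equivalently $E_{21}(P)\,\ob^2+[E_{22}-E_{11}](P)\,\ob-E_{12}(P)=0$, i.e. $\Quad_{N,k}(P)(\ob)=0$. By Definition \ref{muskdeer}, $\ob$ is therefore a root of $\Quad_{N,k}(P)$, i.e. $\ob\in\B_{N,k}(P)$; the infinite cases are covered by the stated convention that $\infty$ is a root of $0x^2+Bx+C$. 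This argument is valid for every type $(N,k)$ of $P$, since $E(P)$ and hence $\B(P)$ is independent of $N$ by the discussion following Remark \ref{rindss}. Finally, if $P$ is over $\OO$, then each $b_i,a_j\in\OO$, and since $E_{11},E_{12},E_{21},E_{22}\in\Z[y_1,\ldots,y_N,x_1,\ldots,x_k]$ (Definition \ref{muskdeer}), the coefficients $E_{21}(P)$, $[E_{22}-E_{11}](P)$, $-E_{12}(P)$ of $\Quad(P)$ lie in $\OO$, so $\ob$ satisfies a quadratic polynomial in $\OO[x]$.

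The one genuine obstruction is the hypothesis ``provided $\overline{E(P)}$ is not the identity'' that appeared in the informal derivation: if $E(P)$ is a scalar matrix then $\overline{E(P)}=\mathrm{Id}$, every $\ob$ solves $\ob=\overline{E(P)}(\ob)$ vacuously, and $\Quad_{N,k}(P)$ is the zero polynomial, so the conclusion $\ob\in\B_{N,k}(P)$ is meaningless. This is exactly why the proposition is stated as a consequence of Theorem \ref{bactrian}: that theorem asserts $P$ does not converge when $E$ is a multiple of the identity, so under our standing assumption that $P$ converges this case cannot occur, and $\Quad_{N,k}(P)$ is a genuine (at most quadratic, possibly linear when $E_{21}(P)=0$) nonzero polynomial with $\B_{N,k}(P)$ a well-defined multi-set of two roots in $\BP^1(\overline{\Q})$. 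Thus the proof is: quote Theorem \ref{bactrian} to exclude the scalar case, upgrade the formal fixed-point identity to an honest one in $\BP^1(\C)$ using convergence of convergents plus continuity of the Möbius map, and read off that the limit is a root of $\Quad(P)$, with the $\OO$-rationality of the coefficients immediate from Definition \ref{muskdeer}.
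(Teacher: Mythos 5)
Your proposal is correct and follows the same route as the paper: the paper derives the formal fixed-point identity $P=\overline{E(P)}(P)$ and the resulting equation $\Quad_{N,k}(P)(P)=0$ in the discussion preceding the proposition, and then states that the proposition follows once Theorem \ref{bactrian} establishes that $P$ diverges when $E(P)$ is a multiple of the identity. You have simply spelled out the convergence bookkeeping (passing to the limit along the subsequence of convergents and using continuity of the M\"obius map $\overline{E(P)}$, which is legitimate since $\det E(P)=(-1)^k\neq 0$) that the paper leaves implicit, and correctly noted the $\OO$-integrality of the coefficients from Definition \ref{muskdeer}.
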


The next proposition gives a property of the entries of $E$
which will have a geometric consequence in Section \ref{varieties}.
To help with the notation make the $a_i$ periodic by writing
\begin{equation}
\label{periodic}
a_i=a_{1+((i-1)\bmod k)} \mbox{ for } i>k.
\end{equation}

\begin{proposition}
\label{ratify}
Fix integers $N,\ell\ge 0$ and $k,m\ge 1$.
Define
\begin{align}
  d_i&=\begin{cases}
 b_i&\text{for }1\le i\le N,\\
 a_{i-N}&\text{for }N+1\le i\le N+\ell,
\end{cases}\label{onto}\\
c_i&=a_{i+\ell}\text{ for }1\le i\le mk,\nonumber
\end{align}
and define $G=G_{k,m}(a_{1},\ldots, a_{k})\in\Z[a_{1},\ldots,a_{k}]$ by
\begin{align*}
G=\sum_{j=0}^{\lfloor(m-1)/2\rfloor}&(-1)^{(k+1)j}\binom{m-1-j}{j}\times\\
&(K_k(a_{1},\ldots,a_{k})+K_{k-2}(a_{2},\ldots,a_{k-1}))^{m-1-2j}.
\end{align*}
Then\begin{align*}
(E_{N+\ell, mk})_{21}(d_1,\ldots,d_{N+\ell},c_1,\ldots,c_{mk})&=
G(E_{N,k})_{21}(b_1,\ldots,b_N,a_1,\ldots,a_{k}),\\
[(E_{N+\ell, mk})_{22}-(E_{N+\ell,
mk})_{11}](d_1,\ldots,d_{N+\ell},& c_1,\ldots,c_{mk})
\\
= G[(E_{N,k})_{22}&-(E_{N,k})_{11}](b_1,\ldots,b_N,a_1,\ldots,a_{k}),\\
(E_{N+\ell, mk})_{12}(d_1,\ldots,d_{N+\ell},c_1,\ldots,c_{mk})&=
G(E_{N,k})_{12}(b_1,\ldots,b_N,a_1,\ldots,a_{k}).
\end{align*}
\end{proposition}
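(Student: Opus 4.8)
The plan is to reduce the entire statement to a single identity about the $m$-th power of one $2\times2$ matrix. Write $B:=M([b_1,\ldots,b_N])$ and $A:=M([a_1,\ldots,a_k])$, so that $E_{N,k}(b_1,\ldots,b_N,a_1,\ldots,a_k)=BAB^{-1}$ by \eqref{foggy}. The first step is to prove
\[
E_{N+\ell,mk}(d_1,\ldots,d_{N+\ell},c_1,\ldots,c_{mk})=BA^{m}B^{-1}=\bigl(BAB^{-1}\bigr)^{m}=E_{N,k}(b_1,\ldots,b_N,a_1,\ldots,a_k)^{m}.
\]
Since $M([x_1,\ldots,x_r])=D(x_1)\cdots D(x_r)$ by Definition \ref{hartebeest}, the defining formulas \eqref{onto} give at once $M([d_1,\ldots,d_{N+\ell}])=D(b_1)\cdots D(b_N)D(a_1)\cdots D(a_{\ell})=B\,M([a_1,\ldots,a_{\ell}])$. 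For the inner factor, $c_i=a_{i+\ell}$ together with the periodic convention \eqref{periodic} — which also gives $D(a_1)\cdots D(a_{mk})=\bigl(D(a_1)\cdots D(a_k)\bigr)^{m}=A^{m}$ — yields
\[
M([a_1,\ldots,a_{\ell}])\,M([c_1,\ldots,c_{mk}])=D(a_1)\cdots D(a_{\ell+mk})=A^{m}\,M([a_1,\ldots,a_{\ell}]),
\]
the last equality by peeling off the final $\ell$ factors, which equal $D(a_1)\cdots D(a_{\ell})$ by periodicity. Hence $M([c_1,\ldots,c_{mk}])=M([a_1,\ldots,a_{\ell}])^{-1}A^{m}M([a_1,\ldots,a_{\ell}])$, and substituting into the product \eqref{foggy} defining $E_{N+\ell,mk}$ makes the two copies of $M([a_1,\ldots,a_{\ell}])$ cancel, leaving $BA^{m}B^{-1}$.

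The second step is to expand $E^{m}$ for $E:=E_{N,k}(b_1,\ldots,b_N,a_1,\ldots,a_k)$. Since $\det E=\det(BAB^{-1})=\det A=(-1)^{k}=:\delta$ (cf.\ Remark \ref{rindss}), Cayley--Hamilton gives $E^{2}=tE-\delta I$ with $t:=\Tr E$, and a trivial induction yields $E^{m}=u_{m}E-\delta u_{m-1}I$, where $u_{0}=0$, $u_{1}=1$, and $u_{m+1}=tu_{m}-\delta u_{m-1}$. Reading off entries, $(E^{m})_{21}=u_{m}E_{21}$, $(E^{m})_{12}=u_{m}E_{12}$, and $(E^{m})_{22}-(E^{m})_{11}=u_{m}(E_{22}-E_{11})$, because the scalar term $-\delta u_{m-1}I$ affects only the diagonal and cancels in the difference. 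Combined with the first step this is precisely the assertion of the proposition with $G$ replaced by $u_{m}$, so it remains to identify $u_{m}$ with $G$.

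For that, note $t=\Tr E=\Tr(BAB^{-1})=\Tr A=K_k(a_1,\ldots,a_k)+K_{k-2}(a_2,\ldots,a_{k-1})$ by \eqref{lastly} — exactly the quantity raised to powers in the definition of $G$. It then suffices to verify the closed form $u_{m}=\sum_{j=0}^{\lfloor(m-1)/2\rfloor}(-\delta)^{j}\binom{m-1-j}{j}t^{m-1-2j}$; this follows by induction on $m$, the recursion $u_{m+1}=tu_{m}-\delta u_{m-1}$ collapsing, after an index shift in the $u_{m-1}$ term, to Pascal's identity $\binom{m-1-j}{j}+\binom{m-1-j}{j-1}=\binom{m-j}{j}$. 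Since $\delta=(-1)^{k}$ gives $(-\delta)^{j}=(-1)^{(k+1)j}$, this closed form is $G=G_{k,m}(a_1,\ldots,a_k)$, which completes the proof. The one step requiring genuine care is the first: keeping the periodic indexing \eqref{periodic} straight so that the conjugating factors really do telescope; everything after $E_{N+\ell,mk}(d,c)=E_{N,k}(b,a)^{m}$ is the standard $2\times2$ power formula plus a routine binomial induction, the only mild subtlety being that when $G=u_{m}=0$ the matrix $E^{m}$ is scalar — exactly the degenerate case (roots undefined) flagged in the discussion following Remark \ref{rindss}.
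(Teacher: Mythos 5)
Your proof is correct and follows essentially the same route as the paper's: reduce to the identity $E_{N+\ell,mk}(d,c)=E_{N,k}(b,a)^m$ and then apply Cayley--Hamilton to read the common factor $G$ off the off-diagonal entries and the diagonal difference of $E^m$. The only differences are cosmetic --- you absorb the $\ell$-shift by directly telescoping the conjugating matrices where the paper applies the continuant identity \eqref{guanaco} $\ell$ times, and you verify the binomial closed form for $G$ by induction via Pascal's identity where the paper appeals to eigenvalues and the Chebyshev polynomial $U_{m-1}$ --- and in both spots your version is somewhat more self-contained than the paper's terse treatment.
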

\begin{proof}

First we prove the proposition for $m=1$.
Since $a_{k+i}=a_{i}$ for $1\le i\le\ell$, we find
\begin{align*}
E_{N+\ell,k}&=M([b_1,\ldots,b_N,a_1,\ldots,a_{k+\ell},0,-a_\ell,\ldots,-a_1,-b_N,\ldots,-b_1,0])\\
          &=M([b_1,\ldots,b_N,a_1,\ldots,a_{k},0,-b_N,\ldots,-b_1,0])=E_{N,k}
\end{align*}
by applying the identity \eqref{guanaco} $\ell$ times, thus proving $G=1$
in this case.

So without loss of generality we can now assume $\ell=0$.
Let $E$ be as in Equation \eqref{fed1}
so that $E_{N,mk}=(E_{N,k})^m$, and
\[
E_{11}+E_{22}=\Tr(E)=K_k(a_{1},\ldots,a_{k})+K_{k-2}(a_{2},\ldots,a_{k-1})
=r_1+r_2,
\]
where $r_i$ are the eigenvalues of $E$,
since trace is invariant under conjugation.
By the Cayley--Hamilton theorem and induction,
$E^m=t_{m-1}E-\det(E)t_{m-2}I$ where
\[t_{m-1}=\begin{cases}
\frac{r_1^m-r_2^m}{r_1-r_2}&\text{if }
r_1\ne r_2,\\
mr_1^{m-1}&\text{if }r_1=r_2.
\end{cases},\text{ so }
\frac{(E_{N,mk})_{21}}{(E_{N,k})_{21}}=\frac{(E_{N,mk})_{22}-(E_{N,mk})_{11}}{(E_{N,k})_{22}-(E_{N,k})_{11}}=\frac{(E_{N,mk})_{12}}{(E_{N,k})_{12}}=G
\]
with $G:=t_{m-1}$.
From this, together with $\Tr(E)=r_1+r_2$
and $\det(E)=r_1r_2=(-1)^k$, we can derive the
polynomial expression for $G$
to be $U_{m-1}(\Tr(E)/2)$ if $k$ is even
and $U_{m-1}(i\Tr(E)/2)/i^{m-1}$ if $k$ is odd,
where $U_{m-1}$ is the Chebyshev polynomial of the second kind.
\end{proof}

\section{PCF Varieties}
\label{varieties}

Let $\overline{\Q}\hookrightarrow\C$ be an algebraic closure of $\Q$, and let
$\OO\subseteq \overline{\Q}$ be the $S$-integers in a number
field $K$.
Suppose $\B$ is the multi-set $\{\beta, \, \bs\}$ of roots in $\BP^1(\overline{\Q})$
of the quadratic polynomial $Ax^2+Bx+C\in\OO[x]$ 
with $(0,0,0)\ne(A,B,C)\in\BP^2(\OO)$ and the usual convention that one
(resp., both) of $\beta,\bs$ is $\infty$ if $A=0$ (resp., $A=B=0$).
Of course $Ax^2+Bx+C\in\OO[x]=A(x-\beta)(x-\bs)$ if $A\neq 0$.
\begin{definition1}
\label{pint}
Let $E_{N,k}=\begin{bmatrix} E_{11} & E_{12}\\
E_{21} & E_{22}\end{bmatrix}$.  
Define the variety
\begin{align}
V_{N,k} : E_{21}(\X )=&
[E_{22}-E_{11}](\X )\label{toady}\\
=& E_{12}(\X ) =0.\nonumber
\end{align}
The {\em PCF variety}  $V(\B)_{N,k}$ is the affine variety
over $\OO$ defined by the three equations
\begin{align}
A[E_{22}-E_{11}](\X ) & = BE_{21}(\X ),\label{foggy1}\\
- AE_{12}(\X ) & = CE_{21}(\X ),\nonumber\\
- BE_{12}(\X ) & = C[E_{22}-E_{11}](\X)\nonumber 
\end{align}
in the notation \eqref{foggy}.
We call $(N,k)$ the {\em type} of the PCF variety
$V(\B)_{N,k}$.
In the special case that $\alpha\in\OO$ but $\beta=\sqrt{\alpha}\notin \OO$,
we shorten the notation  to $V(\alpha)_{N,k}:=V(\{\beta,-\beta\})_{N,k}$.
\end{definition1}

\begin{proposition}
\label{grinder}
\begin{enumerate}
\item\label{grinder1}
The variety $V_{N,k}\subseteq \A^{N,k}$ does not depend
on $N$ in the following sense: Let 
\[
\pi_{N,k}:\A^{N,k}=\A^N\times \A^k\rightarrow \A^k
\]
be projection onto the second factor.  Then $V_{N,k}=\pi_{N,k}^{-1}(V_{0,k})$.
\item\label{grinder2}
If $k=1$, then $V_{N,k}=\emptyset$.
\item\label{grinder3}
We have
$\dim V_{N,k}=\begin{cases} -1 &\text{ if }k=1\\
                          N &\text{ if }k=2\\
                          N+k-3 &\text{ if }k\geq 3.
\end{cases}$
\end{enumerate}
\end{proposition}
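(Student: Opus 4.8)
The plan is to analyze the ideal generated by the three defining polynomials of $V_{N,k}$, namely $E_{21}(\mathbf y,\mathbf x)$, $[E_{22}-E_{11}](\mathbf y,\mathbf x)$, and $E_{12}(\mathbf y,\mathbf x)$, using the explicit continuant formulas of Proposition~\ref{rounded}. First I would prove part~\ref{grinder1}: by Proposition~\ref{ratify} with $m=1$ (or directly by applying \eqref{guanaco} $\ell$ times as in its proof), adjoining $b$'s to a PCF of type $(0,k)$ leaves $E_{21}$, $E_{22}-E_{11}$, and $E_{12}$ unchanged as elements of the polynomial ring — wait, more carefully, one checks from \eqref{fed1} that $E_{N,k}$ in the variables $(\mathbf y,\mathbf x)$ depends only on $(x_1,\ldots,x_k)$ and in fact equals $M([y_1,\ldots,y_N])E_{0,k}M([y_1,\ldots,y_N])^{-1}$; hence the three entries $E_{21}, E_{22}-E_{11}, E_{12}$ of $E_{N,k}$ are $\OO$-linear combinations (with coefficients that are polynomials in the $y_i$) of the three entries $E_{21}, E_{22}-E_{11}, E_{12}$ of $E_{0,k}$, and conversely — since conjugation by $M([y_1,\ldots,y_N])$ is invertible over $\Z[\mathbf y]$, as $\det = (-1)^N$ — the two triples generate the same ideal in $\Z[\mathbf y,\mathbf x]$ after extending scalars. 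Concretely, conjugation fixes the trace and acts on the three-dimensional space of trace-zero-part-plus-$E_{12},E_{21}$ coordinates by an invertible matrix, so the vanishing loci coincide and $V_{N,k}=\pi_{N,k}^{-1}(V_{0,k})$. This immediately gives $\dim V_{N,k} = N + \dim V_{0,k}$, reducing everything to the case $N=0$.

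Next, part~\ref{grinder2}: for $k=1$, Proposition~\ref{rounded} with $N=0$ gives $E_{21}=K_{-1}(\,)=0$, $E_{22}-E_{11}=K_{-2}(\,)-K_1(a_1)=1-a_1$, $E_{12}=K_0(\,)=1$. So the defining equations include $1=0$ (from $E_{12}=0$), whence $V_{0,1}=\emptyset$, and by part~\ref{grinder1} $V_{N,1}=\emptyset$; this also settles the $k=1$ line of part~\ref{grinder3} with the convention $\dim\emptyset=-1$. For $k=2$, again by Proposition~\ref{rounded} with $N=0$: $E_{21}=K_1(a_2)=a_2$, $E_{22}-E_{11}=K_0-K_2(a_1,a_2)=-a_1a_2$, $E_{12}=K_1(a_1)=a_1$. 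The three equations $a_2=0$, $a_1a_2=0$, $a_1=0$ cut out the single point $a_1=a_2=0$ in $\A^2$, so $\dim V_{0,2}=0$ and $\dim V_{N,2}=N$. (Here the ideal is $(a_1,a_2)$, a complete intersection point.)

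The heart of the argument is the case $k\ge 3$, where I must show $\dim V_{0,k}=k-3$, i.e.\ that the three defining equations form a regular sequence in $\OO[x_1,\ldots,x_k]$ — equivalently that their common zero locus has codimension exactly $3$. That $\dim V_{0,k}\ge k-3$ is automatic since a variety in $\A^k$ defined by three equations has every component of dimension $\ge k-3$; the real content is the upper bound $\dim V_{0,k}\le k-3$, which I would get by exhibiting a single point of $V_{0,k}$ at which the $3\times k$ Jacobian of $(E_{21},E_{22}-E_{11},E_{12})$ has rank $3$, forcing some irreducible component through that point to have dimension exactly $k-3$ — but to bound the dimension of \emph{all} components I instead argue that $V_{0,k}$ is nonempty and that after the substitution extinguishing the ``tail'' one can solve the three equations for three of the variables in terms of the rest. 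The cleanest route: use the identities $E_{0,k}=M([x_1,\ldots,x_k])M([x_1,\ldots,x_k])^{-1}$ — no; rather, note $V_{0,k}$ is exactly the locus where $E_{0,k}$ is a scalar matrix (all three of $E_{21}$, $E_{22}-E_{11}$, $E_{12}$ vanish iff $E_{0,k}=\lambda I$), and since $\det E_{0,k}=(-1)^k$ this forces $\lambda^2=(-1)^k$; so $V_{0,k}$ is the preimage under the morphism $\mathbf x\mapsto M([x_1,\ldots,x_k])=D(x_1)\cdots D(x_k)$ of the (finite) set of scalar matrices with determinant $(-1)^k$ inside $\GL_2$. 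The map $\mathbf x\mapsto D(x_1)\cdots D(x_k)$ is a dominant morphism $\A^k\to \GL_2$ (image is Zariski dense, as one sees from the explicit continuant entries \eqref{lastly} — e.g. the entries are algebraically independent enough, or one checks surjectivity onto a dense open by a dimension/fiber argument), and its generic fiber has dimension $k-3$ since $\dim\GL_2=3$; the scalar matrices $\pm I$ (or $\pm$ a square root of $-I$) are among the values, and I must check the fiber over each such scalar matrix is nonempty and has pure dimension $k-3$. The main obstacle — and the step I expect to be most delicate — is precisely this: verifying that the fiber of $D(x_1)\cdots D(x_k)$ over a \emph{specific} scalar matrix has the \emph{expected} dimension $k-3$ with \emph{no} excess components, since a priori the morphism could behave badly over the special (non-generic) scalar values; I would handle this by an explicit induction on $k$, peeling off $D(x_k)$ and using the $k=3$ base case (where one checks directly from Proposition~\ref{rounded} that $V_{0,3}$ is a curve, i.e.\ that the three continuant equations $K_2(x_2,x_3)=x_2x_3+1$, $K_1-K_3=1-x_1x_2x_3-x_1-x_3$, $K_2(x_1,x_2)=x_1x_2+1$ — or rather the correct $N=0,k=3$ entries — cut out a one-dimensional set), together with the fact that appending one more $D(x_{k+1})$ to an already-scalar product traces out a one-parameter family, giving the inductive step $\dim V_{0,k+1}=\dim V_{0,k}+1$.
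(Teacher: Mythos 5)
Your parts \ref{grinder1} and \ref{grinder2}, and the $k\le 2$ cases of \ref{grinder3}, are correct and essentially the paper's argument: $E_{N,k}=M([y_1,\ldots,y_N])\,E_{0,k}\,M([y_1,\ldots,y_N])^{-1}$ is a multiple of the identity if and only if $E_{0,k}$ is (the paper says exactly this, without passing through the invertible linear action on the triple of entries), $M([x_1])=D(x_1)$ is never a scalar matrix, and $V_{0,2}=\{(0,0)\}$. One small slip: for $k=1$ Proposition \ref{rounded} gives $E_{21}=K_0(\,)=1$, not $K_{-1}(\,)=0$; this is harmless since $E_{12}=1$ already forces $V_{0,1}=\emptyset$.

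The $k\ge 3$ case of \ref{grinder3} is where your argument fails. Your base case is false and contradicts the very formula you are proving: $V_{0,3}$ is \emph{not} a curve. The equations $x_1x_2+1=x_2x_3+1=x_2-x_1x_2x_3-x_1-x_3=0$ cut out the two points $\pm(i,i,i)$, so $\dim V_{0,3}=0=3-3$; the one-dimensional object you have in mind is the PCF curve $V(\B)_{0,3}$ of Section \ref{toady1}, not the divergence divisor $V_{0,3}$. The inductive step also does not close: $V_{0,k+1}$ is not obtained by appending $D(x_{k+1})$ to an already-scalar product, since $\lambda D(x_{k+1})$ is never scalar. What is actually needed is the dimension of the fibers of $(x_1,\ldots,x_k)\mapsto D(x_1)\cdots D(x_k)$ over the \emph{non-scalar} matrices $\lambda D(x_{k+1})^{-1}$ as $x_{k+1}$ varies, and those fibers can jump: already for $k=3$ the fiber over $D(\alpha)$ is the line $x_2=0$, $x_1+x_3=\alpha$, of dimension $1$ rather than the generic $0$. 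So the hypothesis that $\dim V_{0,k}=k-3$ is too weak to run the induction. A workable upper bound $\dim V_{0,k}\le k-3$ comes instead from the continuant recursion \eqref{dog}: on $V_{0,k}$ one has $E_{22}=K_{k-2}(x_2,\ldots,x_{k-1})=\lambda$ with $\lambda^2=(-1)^k\ne0$, and $E_{12}=x_1K_{k-2}(x_2,\ldots,x_{k-1})+K_{k-3}(x_3,\ldots,x_{k-1})$ and $E_{21}=K_{k-2}(x_2,\ldots,x_{k-1})x_k+K_{k-3}(x_2,\ldots,x_{k-2})$ are linear in $x_1$ and $x_k$ with the nonvanishing leading coefficient $E_{22}$, so $x_1$ and $x_k$ are determined by $x_2,\ldots,x_{k-1}$; one then still has to check that the remaining equation $E_{11}=E_{22}$ is not identically satisfied on any component of that $(k-2)$-dimensional locus. (For what it is worth, the paper itself merely asserts the dimension count for $V_{0,k}$ in part \ref{grinder3} without proof.)
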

\begin{proof}
The variety $V_{N,k}\subset \A^{N,k}$ is defined by requiring that 
\[
M[y_1, \ldots, y_N]M([x_1, \ldots , x_k])
M[y_1, \ldots, y_N]^{-1}
\]
equal a multiple of the identity as in Definition \ref{muskdeer}.
This is true if and only if $M([x_1,\ldots , x_k])$
is a multiple of the identity, proving \ref{grinder1}.

To prove \ref{grinder2}, it suffices by \ref{grinder1} to remark that 
$V_{0,1}=\emptyset$ since $M([x_1])=D(x_1)$ is never a multiple
of the identity.

To prove \ref{grinder3}, observe that\\[.014in]
\hspace*{.3in}$\dim V_{0,k}= 
\begin{cases} -1 &\text{ if } k=1\\
               0 &\text{ if } k=2\\
              k-3 &\text{ if } k\geq 3
\end{cases}$\qquad  and \qquad 
$\dim V_{N,k}=\begin{cases} -1 &\text{ if } k=1\\
                               N+\dim V_{0,k} &\text{ if } k>1.
\end{cases}$
\end{proof}

\begin{remark1}
Because of Proposition \ref{grinder}\ref{grinder1}, we can
eliminate the variables $y_1,\ldots, y_N$ in \eqref{toady}
and write the equations 
defining $V_{N,k}\subseteq \A^{N,k}$ as 
follows: Let $E_{0,k}=\begin{bmatrix} \E_{11} & \E_{12}\\
\E_{21} & \E_{22}\end{bmatrix}$. Then we have
\begin{equation}
\label{deadly}
V_{N,k}:\E_{12}(x_1, \ldots, x_k)=\E_{21}(x_1, \ldots , x_k)=
[\E_{22}-\E_{11}](x_1, \ldots, x_k)=0.
\end{equation}
\end{remark1}

\subsection{First Properties of PCF Varieties}

\begin{enumerate}
  \item
The dimension of $V_{N,k}$ is $N+k-3$ if $k\geq 3$ by Proposition 
\ref{grinder}\ref{grinder3}.
The dimension of $V(\B)_{N,k}$ is generically 
$N+k-2$.\footnote{We know of three degenerate cases where the dimension of
a component is $> N+k-2$. These are $(N,k)=(1,2)$ with $A=B=0$
in Section \ref{sec:integral}, $(N,k)=(0,1)$ with $C=-A$ in
\ref{gerd}, and $(N,k)=(0,2)$ with $A=B=0$ or $B=C=0$ in \ref{truck}.}
So generically $V_{N,k}$ is a divisor on $V(\B)_{N,k}$ for
any $\B$ if $k\geq 3$.
If $P=\PP$ is an $\OO$-PCF with value
$\ob(P)\in \B$, then
\[
p=p(P)=\p\in V(\B)_{N,k}(\OO)\subseteq \A^{N,k}(\OO).
\]
\item
For integers $\ell\ge0$, $m\ge1$ the map
\[
(b_1,\ldots,b_N, a_1,\ldots,  a_{k})\mapsto (d_{1},\ldots,d_{N+\ell}, 
c_1,\ldots,   c_{mk})
\]
defined by \eqref{onto} induces inclusions
$V(\B)_{N,k}\hookrightarrow V(\B)_{N+\ell, mk}$
of PCF varieties by Proposition \ref{ratify}.
  \item
Conjugates of PCF varieties are PCF varieties:
For $\sigma\in\Gal(\overline{\Q}/\Q)$, let
$\sigma(\B)$ be the multi-set $\{\sigma(\beta),\sigma(\bs)\}$.
Then $V(\B)_{N,k}^{\sigma}=V(\sigma(\B))_{N,k}$.
\item
Let $P$ be a PCF of type $(N,k)$ with roots
$\B=B(P)$.
The dual construction $P\mapsto P^\ast$ of Galois \cite{galois} induces
a linear involution on $V(\B)_{N,k}\subseteq \mathbb{A}^{N,k}$ if $N>0$.
It induces an inclusion
$V(\B)_{0,k}\hookrightarrow V(\B)_{1,k}$ if $N=0$.
\end{enumerate}

\subsection{PCF Varieties \texorpdfstring{\protect{\boldmath{$V(\B)_{N,k}$}}}{} fiber over Fermat-Pell conics}

Suppose now that
$\B=\{\beta,\bs\}$ is the multi-set of  roots of $Ax^2+Bx+C\in\OO[x]$,
where we assume throughout this subsection that $A\neq 0$.
Since $Ax^2+Bx+C=A(x-\beta)(x-\bs)$ we have 
$A(x\beta +y)(x\bs+y)=Cx^2-Bxy+Ay^2$.
\begin{definition1}
Suppose $A\neq 0$. The {\em Fermat-Pell conic} $\FP_k(\B)_{/\OO}$ is the
plane curve
\begin{equation}
\label{Fermat}
\FP_k(\B) : A(x\beta+y)(x\bs +y) =Cx^2-Bxy+Ay^2=(-1)^k A,
\end{equation}
which is irreducible if $B^2-4AC\neq 0$.
If $\alpha\in\OO$ and $\beta=\sqrt{\alpha}\notin\OO$, we
write $\FP(\alpha)$ for $\FP(\{\beta, -\beta\})$.  In this case
we recover the familiar Fermat-Pell equation, namely
\[
\FP_{k}(\alpha): y^2-\alpha x^2=(-1)^k.
\]
\end{definition1}
\begin{theorem}
\label{FPfibration}
There is a fibration $\pi_{\FP}:V(\B)_{N,k}\rightarrow \FP_k(\B)$
over the Fermat-Pell conic defined by
\begin{equation}
\label{FPmap}
\pi_{\FP}(b_1, \dots,b_N,a_1, \ldots , a_{k})=(E_{21}(b_1,\ldots, b_N, 
a_1, \cdots , a_{k}),
E_{22}(b_1,\ldots ,b_N, a_1, \ldots,  a_{k}))
\end{equation}
with the polynomials $E_{21}$, $E_{22}$ as in Definition \textup{\ref{muskdeer}}
and Proposition \textup{\ref{rounded}}.
\end{theorem}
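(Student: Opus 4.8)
The one thing that genuinely needs checking is that $\pi_{\FP}$ maps into $\FP_k(\B)$; granting that, $\pi_{\FP}$ is automatically a morphism of affine $\OO$-schemes, since its two coordinates are the explicit continuant polynomials $E_{21},E_{22}$ of Proposition~\ref{rounded}, and the rest of the statement is interpretation.

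\emph{Well-definedness.} Write $E_{ij}$ for the entries of $E_{N,k}$. As $A\neq0$ throughout this subsection, a point lies on $\FP_k(\B)$ exactly when $CE_{21}^2-BE_{21}E_{22}+AE_{22}^2=(-1)^kA$, and by Remark~\ref{rindss} we have $(-1)^k=\det E_{N,k}=E_{11}E_{22}-E_{12}E_{21}$; so it suffices to show that $CE_{21}^2-BE_{21}E_{22}+AE_{22}^2-A(E_{11}E_{22}-E_{12}E_{21})$ vanishes on $V(\B)_{N,k}$. I would simply regroup this polynomial as
\[
E_{21}\bigl(CE_{21}+AE_{12}\bigr)+E_{22}\bigl(A[E_{22}-E_{11}]-BE_{21}\bigr),
\]
whereupon the first factor vanishes on $V(\B)_{N,k}$ by the second defining equation $-AE_{12}=CE_{21}$ of~\eqref{foggy1}, and the second factor by the first, $A[E_{22}-E_{11}]=BE_{21}$. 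Hence $\pi_{\FP}\bigl(V(\B)_{N,k}\bigr)\subseteq\FP_k(\B)$. Since this is an identity modulo the defining ideal of $V(\B)_{N,k}$, it remains valid in the degenerate cases $\infty\in\B$, $\beta=\bs$, and $B^2-4AC=0$.

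\emph{Why it is a fibration.} On $V(\B)_{N,k}$ the quadratic $\Quad_{N,k}$ has $\B$ as its multiset of roots, so when $\infty\notin\B$ and $\beta\ne\bs$ Proposition~\ref{returned} identifies $\vec v_\beta,\vec v_{\bs}$ as the two eigenlines of $E_{N,k}$, with eigenvalues $\lambda=E_{21}\beta+E_{22}$, $\lambda^*=E_{21}\bs+E_{22}$, and the identity just verified is precisely $\lambda\lambda^*=(-1)^k$. Thus $E_{N,k}$ is recovered on $V(\B)_{N,k}$ from $\B$ and its bottom row, and ``bottom row'' identifies the one-parameter family of determinant-$(-1)^k$ matrices fixing $\vec v_\beta,\vec v_{\bs}$ linearly with $\FP_k(\B)$; under this identification $\pi_{\FP}$ is just $p\mapsto E_{N,k}(p)$ followed by that linear isomorphism. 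Since $V(\B)_{N,k}$ is generically of dimension $N+k-2$ and $\FP_k(\B)$ is a curve, the generic fibre has dimension $N+k-3$, and the scalar locus $V_{N,k}$ is swept into the points of $\FP_k(\B)$ with vanishing first coordinate.

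\emph{Main obstacle.} There is no deep difficulty: the whole proof is the one displayed algebraic identity, a formal consequence of~\eqref{foggy1} together with $\det E_{N,k}=(-1)^k$. What calls for some care is (i) phrasing the computation so it is valid for every admissible $(A,B,C)$ with $A\neq0$ -- handled by working modulo the defining ideal rather than through eigenvectors -- and (ii), should one want $\pi_{\FP}$ surjective rather than merely well-defined, checking that $p\mapsto(E_{21}(p),E_{22}(p))$ covers the whole conic, which for $N=0$ reduces to surjectivity of $(a_1,\dots,a_k)\mapsto D(a_1)\cdots D(a_k)$ onto the relevant one-parameter family of matrices.
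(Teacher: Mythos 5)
Your proof is correct and is in substance the paper's own: the paper's one-line proof cites Proposition~\ref{returned} and Remark~\ref{rindss}, i.e., at a point of $V(\B)_{N,k}$ the eigenvalues of $E_{N,k}$ are $E_{21}\beta+E_{22}$ and $E_{21}\bs+E_{22}$ with product $\det E_{N,k}=(-1)^k$, which is exactly the Fermat--Pell relation $A(E_{21}\beta+E_{22})(E_{21}\bs+E_{22})=(-1)^kA$, and your displayed regrouping is precisely this identity expanded and reduced modulo the defining equations~\eqref{foggy1}. The only (slight) gain of your phrasing is that working modulo the ideal sidesteps the nondegeneracy hypothesis of Proposition~\ref{returned}, so the scalar locus $V_{N,k}$ (where $E$ is a multiple of the identity and the eigenvector argument needs a word) is covered uniformly.
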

\begin{proof}
This follows from Proposition \ref{returned}
applied to $E=E_{N,k}(b_1,\ldots ,b_N, a_1, \ldots ,   a_{k})$
 and Remark \ref{rindss}.
\end{proof}

\begin{definition1}
For $A\neq 0$, let $\Sigma_{k}\subseteq \FP_{k}(\B)$ 
be the zero-dimensional subscheme
\begin{equation*}
\Sigma_k=
\langle x=0,\, y^2=(-1)^k\rangle.
\end{equation*}
\end{definition1}

\begin{proposition}
For $A\neq 0$ and the Fermat-Pell fibration
 $\pi_{\FP}:V(\B)_{N,k}\rightarrow \FP_k(\B)$, we have
$\pi_{\FP}^{-1}(\Sigma_{k})=V_{N,k}\subseteq V(\B)_{N,k}$.
\end{proposition}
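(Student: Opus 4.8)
The plan is to unwind both sides of the claimed equality $\pi_{\FP}^{-1}(\Sigma_k)=V_{N,k}$ using the definitions and the Fermat--Pell fibration of Theorem~\ref{FPfibration}. A point $p=(b_1,\ldots,b_N,a_1,\ldots,a_k)\in V(\B)_{N,k}(\overline{\Q})$ lies in $\pi_{\FP}^{-1}(\Sigma_k)$ precisely when $E_{21}(p)=0$ and $E_{22}(p)^2=(-1)^k$. Since $\det E_{N,k}(p)=(-1)^k$ by Remark~\ref{rindss} and $\det E_{N,k}(p)=E_{11}(p)E_{22}(p)-E_{12}(p)E_{21}(p)$, the condition $E_{21}(p)=0$ forces $E_{11}(p)E_{22}(p)=(-1)^k$; so on the locus $E_{21}=0$ the extra condition $E_{22}^2=(-1)^k$ is equivalent to $E_{22}=E_{11}$ (because then $E_{11}E_{22}=E_{22}^2=(-1)^k$, and conversely $E_{11}=E_{22}$ with $E_{11}E_{22}=(-1)^k$ gives $E_{22}^2=(-1)^k$). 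Hence $\pi_{\FP}^{-1}(\Sigma_k)$ is cut out inside $V(\B)_{N,k}$ by $E_{21}=0$ and $E_{22}-E_{11}=0$.

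Next I would show that these two equations, together with membership in $V(\B)_{N,k}$, already force the third equation $E_{12}=0$, so that $\pi_{\FP}^{-1}(\Sigma_k)=V_{N,k}$ as schemes (recall $V_{N,k}$ is defined by $E_{21}=[E_{22}-E_{11}]=E_{12}=0$, and by Proposition~\ref{grinder} it is independent of $\B$, while it is a subvariety of $V(\B)_{N,k}$ by the First Properties discussion when $k\ge 3$ — but the cleanest argument is purely algebraic and works for all $k$). The defining equations \eqref{foggy1} of $V(\B)_{N,k}$ include $-AE_{12}=CE_{21}$ and $-BE_{12}=C[E_{22}-E_{11}]$; with $E_{21}=E_{22}-E_{11}=0$ these give $AE_{12}=0$ and $BE_{12}=0$. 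If $(A,B)\ne(0,0)$ this immediately yields $E_{12}=0$. Since we are in the subsection hypothesis $A\ne 0$, this is exactly the case at hand, so $E_{12}(p)=0$ automatically, and $p\in V_{N,k}$.

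For the reverse inclusion $V_{N,k}\subseteq\pi_{\FP}^{-1}(\Sigma_k)$: on $V_{N,k}$ we have $E_{21}=0$ and $E_{22}=E_{11}$, hence by the determinant identity $E_{11}E_{22}=E_{22}^2=(-1)^k$, so $(E_{21},E_{22})$ satisfies $x=0$, $y^2=(-1)^k$, i.e.\ lies in $\Sigma_k$; moreover $V_{N,k}\subseteq V(\B)_{N,k}$ (again using the defining equations \eqref{foggy1}, each of which becomes $0=0$ when $E_{21}=E_{22}-E_{11}=E_{12}=0$, so every point of $V_{N,k}$ satisfies them). Therefore $\pi_{\FP}$ is defined on $V_{N,k}$ and maps it into $\Sigma_k$, giving $V_{N,k}\subseteq\pi_{\FP}^{-1}(\Sigma_k)$.

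I expect no serious obstacle here; the only point requiring a little care is the bookkeeping that the scheme-theoretic fiber $\pi_{\FP}^{-1}(\Sigma_k)$ (defined by pulling back the ideal $\langle x, y^2-(-1)^k\rangle$) really coincides with the ideal generated by $E_{21}$ and $E_{22}-E_{11}$ inside the coordinate ring of $V(\B)_{N,k}$ — this uses the determinant relation $E_{11}E_{22}-E_{12}E_{21}=(-1)^k$ together with $E_{21}\in(E_{21})$ to rewrite $y^2-(-1)^k$ as $E_{22}^2-(-1)^k = E_{22}^2 - E_{11}E_{22} + E_{12}E_{21} = E_{22}(E_{22}-E_{11})+E_{12}E_{21}$, which lies in the ideal $(E_{21}, E_{22}-E_{11})$; conversely that ideal is clearly contained in the pulled-back ideal. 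Thus the equality holds on the nose, not merely on underlying sets.
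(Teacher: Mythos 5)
Your proof is correct and follows essentially the same route as the paper: both arguments reduce membership in $\pi_{\FP}^{-1}(\Sigma_k)$ to the single condition $E_{21}(p)=0$ and then use the defining equations \eqref{foggy1} of $V(\B)_{N,k}$ (equivalently, the fact that $\Quad(p)$ is a multiple of $Ax^2+Bx+C$, which must be the zero multiple when $E_{21}=0$ and $A\neq0$) to force $E_{22}-E_{11}=E_{12}=0$, i.e., $p\in V_{N,k}$. The paper is slightly more economical in observing that the condition $E_{22}^2=(-1)^k$ comes for free once $E_{21}=0$, whereas you recover $E_{22}-E_{11}=0$ from it via the determinant identity of Remark~\ref{rindss}; your closing scheme-theoretic remark about equality of ideals is a correct refinement beyond what the paper records.
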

\begin{proof}
Assume $\B=\{\beta,\bs\}$ with
$\OO[x]\ni A(x-\beta)(x-\bs)=Ax^2+Bx+C=0$,
$A\neq 0$.
Then we have that $p\in V(\B)_{N,k}$ is in 
$\pi_{\FP}^{-1}(\Sigma_{k})\subseteq V(\B)_{N,k}$ if and only if 
$E_{21}(p)=0$. But since
$\Quad(p)=E_{21}x^2+(E_{22}-E_{11})x-E_{12}$ must be a multiple of
$Ax^2+Bx+C$ with $A\neq 0$, this multiple must be $0$, i.e.,
$E$ is a multiple of the identity.
But $V_{N,k}$ was defined by the equations 
specifying that $E(p)$ was a multiple of the identity.
\end{proof}
\noindent If $(0,\lambda)$ is
on $\FP_{k}(\B)$, then $\lambda =\pm i^k$ and 
\begin{equation*}
\pi_{\FP}^{-1}(0,\lambda)=\{p\in V(\B)_{N,k}\mid E(p)=\lambda 
\Id_{2\times 2}\}.
\end{equation*}

\subsection{The 
\texorpdfstring{\protect{\boldmath{$\Z_2$}}}{Z\textsubscript{2}}-Extension of
\texorpdfstring{\protect{\boldmath{$\Q$}}}{Q}}

The prototypical PCF, essentially
known to the ancient Greeks, is $\sqrt{2}=[1,\overline{2}]$.
We are interested in generalizing this ur-example to
the $\Z_2$-extension of $\Q$.
We review this tower of number fields now and set notation.
For an integer $m\ge 1$, let $\zeta_{m}$ be the 
primitive $m$th root of unity $e^{2\pi i/m}$ with
\begin{equation*}
\alpha_n=\zeta_{2^{n+2}}+ \overline{\zeta}_{2^{n+2}}=2\cos(2\pi/2^{n+2})
\end{equation*}
for $n\ge -1$.
Hence, $\alpha_{-1}=-2$, $\alpha_0=0$, $\alpha_1=\sqrt{2}$,
$\alpha_2=\sqrt{2+\sqrt{2}}$, $\alpha_3=\sqrt{2+\sqrt{2+\sqrt{2}}}$, etc.
The relation
\begin{equation*}
\alpha_n^2=2 + \alpha_{n-1}
\end{equation*}
holds for $n\ge 0$.
The totally real number field $F_n:=\Q(\alpha_n)=\Q(\zeta_{2^{n+2}})^+$
is Galois over $F_0=\Q$ with $\Gal(F_n/\Q)\cong \Z/2^n \Z$ and
$F_{n+1}$ a quadratic extension of $F_n$ for $n\ge 0$.
Furthermore, $F_\infty = \bigcup_{n\ge 0}F_n$
is the (unique) $\Z_2$-extension of $\Q$.  The integers of
the number field $F_n$ are $\OO_n:=\Z[\alpha_n]$ for $n\geq 0$.

With this notation $\alpha_{1}=\sqrt{2}=[1,\overline{2}]$
gives the point $(1,2)\in V(2)_{1,1}(\Z)$. The problem of finding
{\em all\/} $\Z$-PCFs for $\sqrt{2}$ of type
$(1,1)$ would involve finding the integral points on $V(2)_{1,1}$.
We will see in Proposition \ref{wind} that
\[
V(2)_{1,1}(\Z)=\{\pm (1,2)\}\subseteq \A^{1,1}(\Z)
\]
with $[1,\overline{2}]$ converging to $\sqrt{2}$ and
$[1,-\overline{2}]$ converging to $-\sqrt{2}$.
Hence there is only one $\Z$-PCF  of $\sqrt{2}$ of type
$(1,1)$.
The generalization
of this to the $\Z_2$-extension of $\Q$ is:
\begin{problem}
Find the $\Z[\alpha_n]$-PCFs for $\alpha_{n+1}$,
$n\geq 0$.
\end{problem}
\noindent The associated diophantine problem
for PCF varieties is:
\begin{problem}
Find 
$V(2+\alpha_{n})_{N,k}(\OO_n)$, $n\geq 0$.
\end{problem}

\section{Convergence}
\label{convergence}

Let  $P=\PP$ be an $\OO$-PCF with roots
$\B=\B_{N,k}(P)$; put $V:=V(\B)_{N,k}$.
Say that a point $p\in V(\C)$ is convergent if
the PCF $P(p)$ converges and divergent
otherwise.  For any $\OO$-algebra $R\subseteq \C$ set
\begin{align}
V(R)^{\textrm{con}} & = \{p\in V(R)\mid p\mbox{ is convergent}\}\label{don}\\
V(R)^{\textrm{div}} &=\{p\in V(R)\mid p\mbox{ is divergent}\}\nonumber.
\end{align}
We then have
\begin{equation}
\label{err}
V(R)=V(R)^{\textrm{con}} \coprod V(R)^{\textrm{div}}.
\end{equation}
We want to understand the decomposition \eqref{err};
Corollary \ref{uber} will show that 
\[
V_{N,k}(R)\subseteq V(R)^{\textrm{div}}.
\]

The convergence of $P=[b_1,\ldots, b_N,\overline{a_1, \ldots , a_k}]$
 was understood
in the nineteenth century (see, e.g., \cite{j-t}*{Chapter 3}).
It only depends on the $a_i$, but the criteria for convergence
involve both algebraic conditions and inequalities.
To give the inequalities, make the $a_i$ periodic as in \eqref{periodic}.
Then we have the condition for divergence:
\begin{ineq}
\label{grits}
$M([a_{j+1},\ldots, a_{k+j}])_{21}=0$ and 
$\left|M([a_{j+1}, \ldots, a_{k+j}])_{22}\right|>1$
for some $0\leq j\leq k-1$.
\end{ineq}
In this section we give the practical
Algorithm \ref{con} which given $P$ answers
the questions
\begin{enumerate}
\item Does $P$ converge?
\item If so, which element of $\B(P)$ is its value (or limit)?
\end{enumerate}
We defer proofs to the appendix.

\begin{definition1}
Let $A=\left[\begin{smallmatrix}a&b\\c&d\end{smallmatrix}\right]$ with $ad-bc=\varepsilon=\pm1$.
Let $\lambda_\pm$ be the eigenvalues of $A$ chosen so that
$|\lambda_+|\ge1\ge|\lambda_-|$.
If $A\ne\pm\sqrt{\varepsilon} I$ let
\[\beta_\pm=\beta_\pm(A)=\frac{\lambda_\pm-d}c=\frac{b}{\lambda_\pm-a}
\left(=\frac{a-\lambda_\mp}c=\frac{b}{d-\lambda_\mp}\right)\in\BP^1(\C)
\]
where we take whichever expression is not the indeterminate $0/0$.
\end{definition1}

\begin{theorem}\label{bactrian}
Let $P=\PP$ be
a PCF.
Then the value $\ob(P)$ exists if and only if \textbf{none} of the following
three conditions is satisfied:
\begin{enumerate}
\item\label{bactrian1}
$E(P)=\pm i^k I$.
\item\label{bactrian2}
\textup{INEQ \ref{grits}} holds.
\item\label{bactrian3}
$\Tr(E(P))^2\in\R$ and $0\le(-1)^k\Tr(E(P))^2<4$.
\end{enumerate}
If it converges, then the value $\ob(P)=\beta_+(E(P))$.
\end{theorem}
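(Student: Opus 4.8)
The plan is to analyze the dynamics of the Möbius transformation $\overline{E(P)}$ on $\BP^1(\C)$, since $P$ converges if and only if the orbit of $\infty$ under iterated application of the full period map stabilizes, combined with control over the "partial-period" convergents. First I would recall from the discussion preceding Proposition \ref{rounded} that $P=\overline{E(P)}(P)$ formally, so any limit must be a fixed point of $\overline{E(P)}$, i.e.\ a root of $\Quad_{N,k}(P)$; by Proposition \ref{returned} these fixed points are exactly $\beta_\pm(E(P))$ when $E(P)$ is not scalar. The convergents of $P$ along multiples of the period are $M_N(P)\cdot E(P)^m \cdot \vec v_\infty$ (up to the finitely many initial terms $b_1,\ldots,b_N$, which only apply a fixed Möbius map $M([b_1,\ldots,b_N])$ and hence do not affect convergence), so the core question is the limiting behavior of $E(P)^m$ acting on $\BP^1$ as $m\to\infty$, together with the intermediate convergents obtained by inserting $M([a_1,\ldots,a_j])$ for $0\le j<k$.

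Next I would split into cases according to the eigenvalue structure of $E = E(P)$, recalling $\det E = (-1)^k$ (Remark \ref{rindss}) so the eigenvalues $\lambda_\pm$ satisfy $\lambda_+\lambda_- = (-1)^k$ and $\lambda_+ + \lambda_- = \Tr E$. \emph{Case (i): $E$ is scalar.} Then $E = \pm i^k I$ (the only scalars of determinant $(-1)^k$), $\Quad_{N,k}(P)=0$, and one checks directly that the convergents along the period are eventually constant in a way that fails to converge --- this is condition \ref{bactrian1}; here I would lean on the remark after Remark \ref{rindss} that $E$ can even become undefined for a multiple of $k$, and on the observation that the period-$k$ block acts trivially while leaving the "phase" among the $j$-shifted convergents genuinely oscillating. \emph{Case (ii): $|\lambda_+| > |\lambda_-|$} (equivalently $E$ has distinct eigenvalues of distinct absolute value). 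Then $E^m$ acting on $\BP^1$ converges to the constant map with value the attracting fixed point $\beta_+(E)$, \emph{provided} the starting point $\overline{M([b_1,\ldots,b_N])}{}^{-1}(\infty)$ is not the repelling fixed point; a short argument with the partial-period convergents (INEQ \ref{grits} is precisely the obstruction that one of the $k$ intermediate sequences lands on a divergent geometric trajectory) shows this is equivalent to the failure of \ref{bactrian2}, and then all $k$ interleaved subsequences converge to $\beta_+(E)$, giving $\ob(P)=\beta_+(E(P))$. \emph{Case (iii): $|\lambda_+| = |\lambda_-| = 1$} with $\lambda_+\ne\lambda_-$. Then $E$ is conjugate to a rotation, $E^m$ does not converge (it is quasi-periodic or genuinely periodic on $\BP^1$), and the condition $|\lambda_\pm|=1$ with $\lambda_+\lambda_-=(-1)^k$ translates, after writing $\lambda_\pm = e^{\pm i\theta}$ for $k$ even or $\lambda_\pm = \pm i e^{\pm i\theta}$ for $k$ odd, to $\Tr(E)^2 = (2\cos\theta)^2 \in \R$ with $0 \le (-1)^k\Tr(E)^2 < 4$ --- this is condition \ref{bactrian3} (the endpoint $=4$ being the parabolic boundary case, which falls under \ref{bactrian1} or still converges). \emph{Case (iv): $E$ parabolic} ($\lambda_+=\lambda_-=\pm i^k$ but $E$ not scalar). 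Then $E^m$ converges to the constant map at the unique fixed point, and $P$ converges to $\beta_+(E)=\beta_-(E)$; one checks none of \ref{bactrian1}--\ref{bactrian3} holds (note $\Tr(E)^2 = 4(-1)^k$ sits at the excluded endpoint of \ref{bactrian3}), consistent with the claimed equivalence.

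The main obstacle, I expect, is \textbf{Case (ii) and the precise role of INEQ \ref{grits}}: one must show that convergence of the full sequence of convergents $[c_1,c_2,\ldots]$ is equivalent to convergence of all $k$ subsequences obtained by truncating within a period, and that the \emph{only} way a subsequence can diverge while $E$ is hyperbolic is the scenario where some cyclic rotation $M([a_{j+1},\ldots,a_{k+j}])$ has vanishing $(2,1)$-entry (so it fixes $\infty$) with $(2,2)$-entry of absolute value $>1$ (so iterating pushes that subsequence's "denominator" to $0$ in the wrong way, sending the convergent to $\infty$ or oscillating) --- this requires carefully tracking the matrices $M([a_1,\ldots,a_j])E^m$ and the boundary behavior when $\overline{E}$'s dynamics interact with the coordinate singularity at $\infty$. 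All of this machinery is classical (cf.\ \cite{j-t}*{Chapter 3}), so rather than reprove it here I would cite the nineteenth-century convergence theory and, as stated, defer the detailed verification that conditions \ref{bactrian1}--\ref{bactrian3} capture exactly the failure set to the appendix.
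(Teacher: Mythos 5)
Your architecture is the same as the paper's: the appendix proves a general dynamical proposition (Proposition \ref{giraffe}) classifying $\lim_{n}\oA^n(z)$ by the eigenvalues of $A$, conjugates it (Corollary \ref{okapi}), specializes to $z=\infty$ (Corollary \ref{brocket}), and applies the result to the $k$ interleaved subsequences $\overline{B_j}\,\overline{A_j}^{\,n}(\infty)$ with $B_j=M([b_1,\ldots,b_N,a_1,\ldots,a_j])$ and $A_j=M([a_{j+1},\ldots,a_{k+j}])$, all conjugate to $E(P)$. Your eigenvalue trichotomy and your identification of condition \ref{bactrian2} with ``$\infty$ is fixed by some cyclic rotation $A_j$ with $|(A_j)_{22}|>1$'' are exactly what the paper does.

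Two details are wrong and one step is missing. First, the failure mode under INEQ \ref{grits} is not what you describe: when $(A_j)_{21}=0$ and $|(A_j)_{22}|>1$, the point $\infty$ is the \emph{repelling} fixed point of $\overline{A_j}$, hence fixed by every iterate, so the $j$-th subsequence of convergents is \emph{constant} at $\overline{B_j}(\infty)$ --- it neither oscillates nor escapes to $\infty$. Divergence of $P$ follows instead because this constant value $\overline{B_j}(\beta_-(A_j))$ differs from the common limit $\beta_+(E)=\overline{B_j}(\beta_+(A_j))$ of the remaining subsequences ($\overline{B_j}$ is injective and $\beta_-\ne\beta_+$); see the example $[\overline{2,-1/2,1}]$ in Remark \ref{pudu}, where two subsequences tend to $5$ and the third is identically $0$. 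The same ``consecutive convergents cannot be equal'' observation is what the paper uses to dispose of the scalar case \ref{bactrian1} and of the elliptic case when $\infty$ happens to be a fixed point of some $A_j$, a subcase you do not address. Second, your formula for the convergents is garbled: writing $M_N:=M([b_1,\ldots,b_N])$, the $(N+nk)$-th convergent is $\overline{M_N}\,\overline{A_0}^{\,n}(\infty)=\overline{E}^{\,n}\bigl(\overline{M_N}(\infty)\bigr)$, not $\overline{M_N}\,\overline{E}^{\,n}(\infty)$, so the obstruction is $\overline{M_N}(\infty)=\beta_-(E)$, equivalently $\infty=\beta_-(A_0)$, rather than the condition on $\overline{M_N}^{-1}(\infty)$ that you state. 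Finally, the verification you propose to dispatch by citing the classical literature is precisely the content of the paper's appendix; as written, your argument asserts the equivalence in case (ii) without proving it.
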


\begin{corollary}
\label{uber}
Set $V:=V(\B)_{N,k}$.  Then $V_{N,k}(\C)\subseteq V(\C)^{\textrm{div}}$ for any
$\B$.
\end{corollary}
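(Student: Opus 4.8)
The plan is to reduce Corollary \ref{uber} to part \ref{bactrian1} of Theorem \ref{bactrian}. Recall that $V_{N,k}$ is, by Definition \ref{pint}, exactly the locus where $E_{N,k}(\X)$ is a scalar multiple of the identity; more precisely, a point $p \in \A^{N,k}(\C)$ lies on $V_{N,k}$ if and only if $E_{N,k}(p) = \lambda I$ for some $\lambda \in \C$. So the first step is to pin down which scalars $\lambda$ can actually occur. Since $\det E_{N,k}(p) = (-1)^k$ by Remark \ref{rindss}, we get $\lambda^2 = (-1)^k$, hence $\lambda = \pm i^k$. Therefore every point of $V_{N,k}(\C)$ satisfies $E(P) = \pm i^k I$, which is precisely condition \ref{bactrian1} of Theorem \ref{bactrian}.

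The second step is to invoke Theorem \ref{bactrian}: if condition \ref{bactrian1} holds, then $\ob(P)$ does not exist, i.e. $P(p)$ diverges, so $p \in V(\C)^{\mathrm{div}}$. The only subtlety is a bookkeeping point about the meaning of $V(\B)_{N,k}$ in this context: for $p \in V_{N,k}$, the matrix $E_{N,k}(p)$ is scalar, so $\Quad_{N,k}(p)$ is the zero polynomial and $\B_{N,k}(p)$ is undefined. One should note that $V_{N,k}$ is nonetheless a subvariety of $V(\B)_{N,k}$ for \emph{every} $\B$: indeed, the defining equations \eqref{foggy1} of $V(\B)_{N,k}$ all have $E_{21}, E_{22}-E_{11}, E_{12}$ as a common factor appearing on one side, so whenever all three of these polynomials vanish — which is exactly the definition \eqref{toady} of $V_{N,k}$ — equations \eqref{foggy1} are automatically satisfied regardless of $(A,B,C)$. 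This confirms $V_{N,k}(\C) \subseteq V(\B)_{N,k}(\C)$, so the claimed inclusion $V_{N,k}(\C) \subseteq V(\C)^{\mathrm{div}}$ makes sense and follows.

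I do not expect a serious obstacle here; the corollary is essentially a direct translation. The one place that requires a word of care is the implicit claim, already flagged in the text immediately before Proposition \ref{rounded} ("So $\Quad_{N,k}(P)(P)=0$ provided $\overline{E(P)}$ is not the identity") and in the discussion after Remark \ref{rindss}, that when $E(P)$ is scalar the PCF genuinely fails to converge — this is exactly what Theorem \ref{bactrian}\ref{bactrian1} supplies, so the corollary is really just the observation that $V_{N,k}$ forces $E(P)$ into the shape $\pm i^k I$ via the determinant constraint. In writing it up I would state the determinant computation, conclude $E(P) = \pm i^k I$ on $V_{N,k}$, and cite Theorem \ref{bactrian}\ref{bactrian1} for divergence; a single sentence each suffices.
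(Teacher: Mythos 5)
Your proposal is correct and follows exactly the paper's route: the paper's own proof is the one-line observation that $p\in V_{N,k}(\C)$ if and only if $E(p)$ is a multiple of $I$, which (via the determinant constraint forcing the scalar to be $\pm i^k$) places $p$ in case \ref{bactrian1} of Theorem \ref{bactrian}. Your additional remarks — pinning down the scalar via Remark \ref{rindss} and checking that $V_{N,k}\subseteq V(\B)_{N,k}$ for every $\B$ from the shape of equations \eqref{foggy1} — are correct and merely make explicit what the paper leaves implicit.
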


\begin{proof}
We have $p\in V_{N,k}(\C)$ if and only if $E(p)$ is a multiple of $I$.
\end{proof}

We reformulate Theorem \ref{bactrian} as the following algorithm.
Step 2 corresponds to checking for Case \ref{bactrian1}, and
Step 6 corresponds to checking for Case \ref{bactrian3}.
\begin{algorithm}
\label{con}
\SetKw{Continue}{continue}
\SetKw{Break}{break}
\SetKwInOut{Input}{Input}\SetKwInOut{Output}{Output}
\Input{A periodic continued fraction $P=\PP$.}
\Output{Either its value (or limit) $\ob(P)$ or ``doesn't exist''.}
compute $E=E(P)$ from \eqref{fed1}\;
\lIf{$E$ is a multiple of the identity}{print ``doesn't exist''; \Break}
compute the multi-set of roots $\B(P)=\{\beta,\bs\}$ of 
$E_{21}(P)x^2+[E_{22}-E_{11}](P)x-E_{12}(P)$\;
\lIf{$\beta=\bs$}{$\ob(P):=\beta=\bs$; return $\ob(P)$; \Break}
WLOG assume $\beta\ne\infty$ and compute $\left|E_{21}\beta+E_{22}\right|$\;
\lIf{$\left|E_{21}\beta+E_{22}\right|=1$}{print ``doesn't exist''; \Break}
\leIf{$\left|E_{21}\beta+E_{22}\right|>1$}{$\ob(P):=\beta$}{$\ob(P):=\bs$}
\leIf{\textup{INEQ \ref{grits}}  is satisfied}{print ``doesn't exist''; \Break;}
{return $\ob(P)$}
\caption{ The value (or limit) of a periodic continued fraction}\label{pudu5}
\end{algorithm}

\begin{remark1}
\label{pudu}
  \begin{enumerate}
    \item[]
\item The conditions are ordered from the least likely to the most likely to
occur, with convergence (none of them occurring) the most likely of all;
cf.~Proposition \ref{giraffe}.
\item  The dual PCF
\[P^*=[b_1 , b_2,\ldots, b_{N},0, \overline{-a_{k},-a_{k-1},\ldots,-a_1}]\]
converges under the same conditions, except the inequality is reversed in the
second condition.
If it converges, the limit is the ``other'' fixed point $\beta_+(E(P^*))=\beta_-(E(P))$, cf. Corollary \ref{hungry}.
\item\label{pudu3}
Since $(E_{21}z+E_{22})(E_{21}z-E_{11})=-(-1)^k$, we could use the inequality
\[|E_{21}\beta_+(E)-E_{11}|<1<|E_{21}\beta_-(E)-E_{11}|\] instead in the algorithm.
Furthermore, $|E_{21}z+E_{22}|$ tells us how fast the continued fraction
converges: each successive convergent provides 
\begin{equation*}
\frac{2}{k}\log_{10}(\left|E_{21}z+E_{22}\right|)=-\frac{2}{k}\log_{10}
(\left|E_{21}z-E_{11}\right|)
\end{equation*}
decimal digits of accuracy on average.\footnote{ A simple calculation shows
that the $(nk)$-th convergent has error $O(1/\lambda_1^{2n})$ where
$\lambda_1=E_{21}\beta_+(E)+E_{22}$ is the larger eigenvalue of $E$.}
If $z=\beta_+(E)=\beta_-(E)$, and therefore the limit exists and $|E_{21}z+E_{22}|=1$,
exponentially many convergents are needed for each additional digit
of accuracy.
For example the $m$-th convergent of $[\overline{2i}]$ is $i+i/m$, so the
$n$-th digit isn't accurate until past the $10^n$-th convergent.
\item As an example of what can go wrong if \textup{INEQ \ref{grits}} is
satisified, the convergents $c_n$ of $[\overline{2,-1/2,1}]$ have the following
behavior:
\[\lim_{n\to\infty}c_{3n}=5=\lim_{n\to\infty}c_{3n+1},\text{ but }c_{3n+2}=0\text{ for every }n.\]




\end{enumerate}
\end{remark1}

\section{PCF Varieties of Type 
\texorpdfstring{$(N,k)$}{(N,k)} when 
\texorpdfstring{$N+k\le 2$}{N+k<3}}
\label{drive}

Since the dimension of a PCF variety of type $(N,k)$ is
$N+k-2$ and $k\ge1$, there are three types where PCF varieties
consist of a finite set of points.

\subsection{Type \texorpdfstring{\protect{\boldmath{$(0,1)$}}}{(0,1)}}
\label{gerd}

We have 
\[
E_{0,1}=E=\begin{bmatrix} E_{11} & E_{12}\\E_{21} & E_{22}\end{bmatrix}
=\begin{bmatrix} x_1 & 1\\1 & 0\end{bmatrix}
\]
from \eqref{foggy}.
Hence if $\B=\{\beta,\bs\}$ are roots of the quadratic polynomial $Ax^2+Bx+C$,
the variety $V(\B)_{0,1}$ is given \eqref{foggy1} by
\begin{equation}
\label{route}
A(-x_1)  =  B,\quad -A(1)  =  C,\quad -B(1)  =  C(-x_1).
\end{equation}
From \eqref{toady}, the variety $V_{0,1}=\emptyset$.
Nominally the dimension of $V(\B)_{0,1}$ is $-1$,
which means that only special $\beta$, $\bs$  give a nonempty variety.
For these special $\beta$ the variety has exactly one point.
The condition for $V(\B)_{0,1}$
to be nonempty from \eqref{route} is $C=-A\ne0$ or,
equivalently, $\beta\bs=-1$.
The one-point variety consists of $a_1=-B/A$, which is in $V(\B)_{0,1}(\OO)$
if and only
if $\beta+\bs\in\OO$.
In case $V(\B)_{0,1}\neq\emptyset$, the Fermat-Pell conic $\FP_{1}(\B)$ 
of \eqref{Fermat} 
is given by
$-x^2+(\beta + \bs)xy+y^2=-1$, and the map $\pi_{\FP}:V(\B)_{0,1}\rightarrow \FP_{1}(\B)$
by \eqref{FPmap}
is $\pi_{\FP}(a_1)=(1,0)$.

We have thus proved:

\begin{proposition}

If $V(\B)_{0,1}\neq \emptyset$ and $\beta+\bs=0$,
then $\beta=\pm 1$.
In particular, the variety $V(2+\alpha_n)_{0,1}=\emptyset$,
and there are no $\Z[\alpha_n]$-PCFs  for $\alpha_{n+1}$
of type $(0,1)$ for $n\ge 0$.
\end{proposition}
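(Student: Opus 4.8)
The plan is to extract everything from the analysis of type $(0,1)$ carried out just above. Recall it was shown there, via the defining equations \eqref{route}, that $V(\B)_{0,1}\neq\emptyset$ precisely when $C=-A\neq 0$ in the quadratic $Ax^2+Bx+C$ with roots $\B=\{\beta,\bs\}$, equivalently when $\beta\bs=-1$ (and then the unique point is $a_1=-B/A=\beta+\bs$). So the first step is simply to combine this with the hypothesis $\beta+\bs=0$: since $C=-A\neq 0$ forces $A\neq 0$, both roots are finite, and writing $\bs=-\beta$ gives $-\beta^2=\beta\bs=-1$, hence $\beta^2=1$ and $\beta=\pm1$. That settles the first assertion outright, with no further computation.

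For the application, take $\B=\{\beta,-\beta\}$ with $\beta=\sqrt{2+\alpha_n}=\alpha_{n+1}$, so that $\beta+\bs=0$ holds automatically. If $V(2+\alpha_n)_{0,1}$ were nonempty, the first part would give $\alpha_{n+1}=\pm1$, hence $\alpha_{n+1}^2=1$. But $\alpha_{n+1}^2=2+\alpha_n$, and for $n\ge0$ one has $\alpha_n\ge0$ (indeed $\alpha_n=2\cos(2\pi/2^{n+2})$ with $2\pi/2^{n+2}\le\pi/2$, so $\alpha_0=0$ and $\alpha_n>0$ for $n\ge1$), whence $\alpha_{n+1}^2\ge2>1$, a contradiction. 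Therefore $V(2+\alpha_n)_{0,1}=\emptyset$. Finally, an $\OO_n$-PCF of $\alpha_{n+1}$ of type $(0,1)$ with roots $\B$ would, by the correspondence between $\OO$-PCFs and $\OO$-points, produce a point of $V(\B)_{0,1}(\OO_n)=V(2+\alpha_n)_{0,1}(\OO_n)\subseteq V(2+\alpha_n)_{0,1}$; since this set is empty there are no such PCFs.

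There is essentially no obstacle: the substantive work is already done in deriving \eqref{route} and characterizing when $V(\B)_{0,1}\neq\emptyset$, and what remains is the elementary implication $\beta\bs=-1,\ \beta+\bs=0\Rightarrow\beta=\pm1$ together with the trivial inequality $\alpha_{n+1}^2=2+\alpha_n\ge2$. The only point meriting an explicit sentence is the nonnegativity $\alpha_n\ge0$ for all $n\ge0$, which follows from $\alpha_n=2\cos(2\pi/2^{n+2})$ and $2\pi/2^{n+2}\le\pi/2$.
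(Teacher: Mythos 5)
Your proposal is correct and follows essentially the same route as the paper: the paper's proof is precisely the preceding discussion showing via \eqref{route} that $V(\B)_{0,1}\neq\emptyset$ iff $C=-A\neq0$, i.e.\ $\beta\bs=-1$, which with $\beta+\bs=0$ forces $\beta=\pm1$, and then $\alpha_{n+1}^2=2+\alpha_n\neq1$ rules out the tower case. Your explicit remark that $\alpha_n\ge0$ (so $2+\alpha_n\ge2$) is a small point the paper leaves implicit, but nothing is genuinely different.
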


According to Algorithm \ref{pudu5} let
$\beta,\bs$ be the two roots of $x^2-a_1x-1=0$.
If $a_1=\pm 2i$ the value of the PCF  $P=[\overline{a_1}]$
exists and $\ob(P)=\beta=\bs=\pm i$.
If $\beta\ne\bs$ but $|\beta|=1$, then the value does not exist.
If $|\beta|>1>|\bs|$, then $\ob(P)=\beta$.
It is easy enough to say what happens more directly.
If $a_1^2\in\R$ and $-4<a_1^2\le 0$, then the value does not exist.
Otherwise, $\ob(P)=\beta_+(E(P))$.

\subsection{Type \protect{\boldmath{$(0,2)$}}}
\label{truck}

From \eqref{foggy}, 
\[
E_{0,2}=E=\begin{bmatrix} E_{11} & E_{12}\\E_{21} & E_{22}\end{bmatrix}
=\begin{bmatrix} x_1x_2+1 & x_1\\x_2 & 1\end{bmatrix}.
\]
Hence if $\B=\{\beta,\bs\}$ are roots of $Ax^2+Bx+C$,
the variety $V(\B)_{0,2}$ is given \eqref{foggy1} by
\begin{equation*}
A(-x_1x_2)  =  B(x_2),\quad -A(x_1)  =  C(x_2),\quad -B(x_1)  =  C(-x_1x_2).
\end{equation*}
From \eqref{toady}, the variety $V_{0,2}$ is given by
$x_2=x_1=-x_1x_2=0$, so $V_{0,2}\subseteq V(\B)_{0,2}(\C)^{\textrm{div}}$ 
in the notation of \eqref{don}  is the point
$(0,0)$.
If $A,B,C\ne0$, then $V(\B)_{0,2}$ consists of two points $(a_1,a_2)=(-B/A,B/C)$
and the  point $(0,0)$ constituting the divisor 
$V_{0,2}\subseteq V(\B)_{0,2}$.
If $B=0\ne A,C$; $C=0\ne B$; or $A=0\ne B$,
then there is only the extraneous point $(0,0)$.
If $B=C=0$, then $V(\{0,0\})_{0,2}$ is one-dimensional,
consisting of the line $x_1=0$,
and if $A=B=0$, then $V(\{\infty,\infty\})_{0,2}$ is also one-dimensional,
consisting of the line $x_2=0$.
The Fermat-Pell conic $\FP_{2}(\B)$ is given \eqref{Fermat}
by $Cx^2-Bxy+Ay^2=A$ and the map $\pi_{\FP}:V(\B)_{0,2}\rightarrow
\FP_2(\B)$ by $\pi_{\FP}(x_1,x_2)=(x_2,1)$.
We have thus proved:
\begin{proposition}
\label{wind}
\begin{enumerate}
  \item
For $n\ge1$, $V(\alpha_{n-1})_{0,2}$ consists of
the extraneous point $(a_1,a_2)=(0,0)$.
\item  There are no 
$\Z[\alpha_n]$-PCFs of type $(0,2)$
for $\alpha_{n+1}$, $n\ge 0$.
\end{enumerate}
\end{proposition}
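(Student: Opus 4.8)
The plan is to read both statements directly off the explicit description of the type-$(0,2)$ PCF variety obtained in the paragraph immediately preceding the proposition. Recall that $E_{0,2}=\left[\begin{smallmatrix}x_1x_2+1 & x_1\\ x_2 & 1\end{smallmatrix}\right]$, so that for $\B$ the multi-set of roots of $Ax^2+Bx+C$ the variety $V(\B)_{0,2}\subseteq\A^{0,2}$ is cut out by $-Ax_1x_2=Bx_2$, $-Ax_1=Cx_2$, $-Bx_1=-Cx_1x_2$, and that when $A\ne0$, $B=0$, $C\ne0$ the only point of $V(\B)_{0,2}$ is the extraneous point $V_{0,2}=\{(0,0)\}$, namely the locus on which $E_{0,2}$ is a scalar matrix.

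For part (1), by definition $V(\alpha_{n-1})_{0,2}=V(\{\beta,-\beta\})_{0,2}$ with $\beta^2=\alpha_{n-1}$; that is, it is the PCF variety attached to the quadratic $x^2-\alpha_{n-1}$, so $(A,B,C)=(1,0,-\alpha_{n-1})$. The one thing to verify is that $\alpha_{n-1}\ne0$: since $\alpha_{n-1}=2\cos(2\pi/2^{n+1})$ and the argument is $<\pi/2$ once $n\ge2$, we get $\alpha_{n-1}>0$ — this being exactly the range in which $\sqrt{\alpha_{n-1}}\notin\OO_n$, so that the notation $V(\alpha_{n-1})_{0,2}$ is meaningful. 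Then $(A,B,C)$ falls into the case $B=0\ne A,C$ of the description above, whence $V(\alpha_{n-1})_{0,2}=\{(0,0)\}$. At the boundary $n=1$ one has $\alpha_0=0\in\OO_1$, and the relevant assertion there is the $n=0$ instance of part (2), handled via $V(2)_{0,2}=\{(0,0)\}$ below.

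For part (2), an $\OO_n$-PCF of $\alpha_{n+1}=\sqrt{2+\alpha_n}$ of type $(0,2)$ corresponds, via the dictionary between $\OO$-PCFs and $\OO$-points of PCF varieties (cf. Definition \ref{pint}), to a point of $V(2+\alpha_n)_{0,2}(\OO_n)$ whose continued fraction $[\overline{a_1,a_2}]$ converges to $\alpha_{n+1}$. Since $\alpha_n\in[0,2)$ gives $2+\alpha_n\in[2,4)$, the quadratic $x^2-(2+\alpha_n)$ has $(A,B,C)=(1,0,-(2+\alpha_n))$ with $A\ne0$, $B=0$, $C\ne0$, so by the description above $V(2+\alpha_n)_{0,2}(\OO_n)=\{(0,0)\}$. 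But $(0,0)$ lies in $V_{0,2}(\C)\subseteq V(\C)^{\textrm{div}}$ by Corollary \ref{uber} — concretely, the convergents of $[\overline{0,0}]$ alternate between $0$ and $\infty$ — so the sole point of the variety yields a divergent PCF, and there is no $\OO_n$-PCF of $\alpha_{n+1}$ of type $(0,2)$. I expect no genuine obstacle: the proposition is a specialization of material already established, and the only items needing a sentence of care are the (elementary) non-vanishing of $\alpha_{n-1}$ and of $2+\alpha_n$, together with the closing step — precisely Corollary \ref{uber} — that the unique $\OO_n$-point of the PCF variety is the extraneous, divergent one.
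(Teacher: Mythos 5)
Your proposal is correct and follows essentially the same route as the paper: both read the result off the explicit case analysis of $V(\B)_{0,2}$ in the case $B=0\ne A,C$, identify the unique point as the extraneous $(0,0)\in V_{0,2}$, and note that this point is divergent (the paper records this in the sentence following the proposition, where it observes that $[\overline{0,0}]$ has no value). Your extra care about the non-vanishing of $C$ and the $n=1$ boundary is a reasonable reading of the same argument and introduces no new method.
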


According to Algorithm \ref{pudu5} let
$\beta,\bs$ be the two roots of $a_2x^2-a_1a_2x-a_1=0$.
If $(a_1,a_2)=(0,0)$, then the value $\ob(P)$  of the PCF
$P=[\overline{a_1,a_2}]$ does not exist.
If $a_1=0\ne a_2$, then $\ob(P)=\beta=\bs=0$.
If $a_1\ne0=a_2$, then $\ob(P)=\beta=\bs=\infty$.
If $a_1a_2=-4$, then $\ob(P)=\beta=\bs=a_1/2=-2/a_2$.
If $\beta\ne\bs$ but $|a_1\beta+1|=1$, then $\ob(P)$ does not exist.
If $|a_2\beta+1|>1>|a_2\bs+1|$, then $\ob(P)=\beta$.

\subsection{Type \texorpdfstring{\protect{\boldmath{$(1,1)$}}}{(1,1)}}

From \eqref{foggy}, 
\[
E_{1,1}=E=\begin{bmatrix} E_{11} & E_{12}\\E_{21} & E_{22}\end{bmatrix}
=\begin{bmatrix} y_1& x_1y_1+1-y_1^2\\1 & x_1-y_1\end{bmatrix}.
\]
Hence if $\B=\{\beta,\bs\}$ are roots of $Ax^2+Bx+C$,
the variety $V(\B)_{1,1}\subseteq \A^{1,1}$ is given \eqref{foggy1} by
\begin{equation*}
A(x_1-2y_1)  =  B,\quad A(y_1^2-x_1y_1-1)  =  C,\quad 
B(y_1^2-x_1y_1-1)  =  C(x_1-2y_1).
\end{equation*}
From \eqref{toady}, the variety $V_{1,1}$ is given by
$1=x_1y_1+1-y_1^2=x_1-2y_1=0$, so $V_{1,1}$ is the empty set.
If $A=0$, $V(\B)_{1,1}$ is empty.
Otherwise, the variety consists of two points (counting multiplicity)
$(y_1, x_1)=(b_1, a_1)$
in a potentially quadratic extension with
\[a_1=\pm\sqrt{\frac{B^2-4AC}{A^2}-4}\quad \text{ and }\quad b_1=\frac12\left(-\frac{B}A+a_1\right).\]
The Fermat-Pell conic $\FP_{1}(\B)$ is given \eqref{Fermat}
by $Cx^2-Bxy+Ay^2=-A$ and the map $\pi_{\FP}:V(\B)_{1,1}\rightarrow
\FP_1(\B)$ by $\pi_{\FP}(y_1, x_1)=(1, x_1-y_1)$.

Since $(x-\alpha_{n+1})(x+\alpha_{n+1})=x^2-(2+\alpha_{n})$, we have the
following proposition.
\begin{proposition}
\begin{enumerate}
\item  The variety $V(2+\alpha_n)_{1,1}$ consists of two
points $(b_1,a_1)$ with
\begin{equation*}
a_1=\pm 2\sqrt{1+\alpha_{n}}\quad\mbox{and}\quad b_1=a_1/2.
\end{equation*}
It has no points rational over $\Q(\alpha_{n})$ if $n\geq1$.
The PCF variety  $V(2)_{1,1}$ consists of
the two points $(b_1,a_1)=\pm(1,2)$.
\item  There are no $\Z[\alpha_{n}]$-PCFs
of type $(1,1)$ for $\alpha_{n+1}$ if $n\geq 1$.
If $n=0$, there is precisely one: $\sqrt{2}=[1,\overline{2}]$.
\end{enumerate}
\end{proposition}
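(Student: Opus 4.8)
The plan is to reduce everything to the explicit description of type-$(1,1)$ PCF varieties obtained just above, and then settle one rationality question about $1+\alpha_n$. Since $(x-\alpha_{n+1})(x+\alpha_{n+1})=x^2-(2+\alpha_n)$, the multi-set $\B=\{\alpha_{n+1},-\alpha_{n+1}\}$ corresponds to $(A,B,C)=(1,0,-(2+\alpha_n))$. Substituting this into the defining equations of $V(\B)_{1,1}$ — equivalently into the closed form $a_1=\pm\sqrt{(B^2-4AC)/A^2-4}$, $b_1=\frac12(-B/A+a_1)$ recorded above — I get the relations $x_1-2y_1=0$ and $y_1^2-1=-(2+\alpha_n)$, hence the two points with $b_1=a_1/2$ and $a_1^2=4(1+\alpha_n)$, i.e.\ $a_1=\pm2\sqrt{1+\alpha_n}$. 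Specializing to $n=0$, where $\alpha_0=0$, gives $a_1=\pm2$, $b_1=\pm1$, so $V(2)_{1,1}$ is the pair of $\Z$-points $\pm(1,2)$. This already yields the two displayed formulas in part~(a) and its last sentence about $V(2)_{1,1}$.

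The one substantive step is the claim that $V(2+\alpha_n)_{1,1}$ has no $F_n$-rational point when $n\ge1$; by the previous paragraph this is precisely the assertion that $1+\alpha_n$ is not a square in $F_n=\Q(\alpha_n)$. I would argue via total positivity: $F_n$ is totally real, and for $n\ge1$ the group $\Gal(F_n/\Q)\cong\Z/2^n\Z$ is nontrivial, so it contains the automorphism induced by $\zeta_{2^{n+2}}\mapsto\zeta_{2^{n+2}}^{2^{n+1}-1}$, which sends $\alpha_n=2\cos(2\pi/2^{n+2})$ to $2\cos(\pi-\pi/2^{n+1})=-2\cos(\pi/2^{n+1})$. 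Because $\pi/2^{n+1}\le\pi/4<\pi/3$ for $n\ge1$, this conjugate of $\alpha_n$ is $<-1$, so $1+\alpha_n$ has a negative real conjugate and hence is not a square in $F_n$. (For $n=1$ one may instead note $N_{F_1/\Q}(1+\sqrt2)=-1$.) Therefore $a_1=\pm2\sqrt{1+\alpha_n}\notin F_n$, so $V(2+\alpha_n)_{1,1}(F_n)=\emptyset$, and a fortiori $V(2+\alpha_n)_{1,1}(\OO_n)=\emptyset$, for all $n\ge1$.

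For part~(b), recall that a convergent $\OO$-PCF of type $(N,k)$ whose value lies in $\B$ gives a point of $V(\B)_{N,k}(\OO)$. Thus for $n\ge1$, emptiness of $V(2+\alpha_n)_{1,1}(\OO_n)$ forces the absence of any $\Z[\alpha_n]$-PCF of type $(1,1)$ for $\alpha_{n+1}$. For $n=0$ the two $\Z$-points of $V(2)_{1,1}$ give exactly the candidates $[1,\overline2]$ and $[-1,\overline{-2}]$. The first is the classical $\sqrt2=[1,\overline2]$; alternatively, applying Theorem~\ref{bactrian} with $E=\left[\begin{smallmatrix}1&2\\1&1\end{smallmatrix}\right]$ one checks that none of its three divergence conditions holds and that $\ob=\beta_+(E)=\sqrt2$. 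The second gives $[-1,\overline{-2}]=-[1,\overline2]$, since negating all partial quotients negates the value (because $\phi_{-c}(-z)=-\phi_c(z)$), so it converges to $-\sqrt2\ne\sqrt2$ and is not a PCF for $\alpha_1$. Hence $[1,\overline2]$ is the unique type-$(1,1)$ $\Z$-PCF of $\sqrt2$.

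I expect the middle paragraph to be the only point requiring an actual idea: ruling out $1+\alpha_n\in(F_n^\times)^2$. The total-positivity argument handles all $n\ge1$ uniformly; everything else is substitution into formulas already on the page together with the bookkeeping of Theorem~\ref{bactrian} to identify limits in the $n=0$ case.
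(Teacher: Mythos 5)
Your proposal is correct and follows essentially the same route as the paper: the paper's entire proof is the remark that $1-\alpha_n<0$ for $n\ge1$, so its Galois conjugate $1+\alpha_n$ cannot be a square in the (totally) real field $\Q(\alpha_n)$, which is exactly your total-positivity argument with the relevant automorphism written out explicitly. The remaining steps (substituting $(A,B,C)=(1,0,-(2+\alpha_n))$ into the type-$(1,1)$ formulas and checking that $\pm(1,2)$ give $\pm\sqrt2$ when $n=0$) match the paper's implicit bookkeeping.
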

\begin{proof}

We need only remark that
$1-\alpha_{n}<0$ if $n\geq 1$, and hence its Galois conjugate
$1+\alpha_{n}$ cannot be a square in $\Q(\alpha_{n})\subset\R$.
\end{proof}

According to Algorithm \ref{pudu5} let
$\beta,\bs$ be the two roots of $x^2+(a_1-2b_1)x-b_1a_1+b_1^2-1=0$.
If $a_1=\pm2i$, the value of the PCF $P=[b_1,\overline{a_1}]$
exists and $\ob(P)=\beta=\bs=b_1\mp i$.
If $\beta\ne\bs$ but $|\beta-b_1|=1$, then the value does not exist.
If $|\beta-b_1|<1<|\bs-b_1|$, then $\ob(P)=\beta$.

\section{PCF Curves of Type \texorpdfstring{$(0,3)$}{(0.3)}} 
\label{toady1}

There are three types -- $(N,k)=(0,3), (2,1), (1,2)$ --  where the
PCF variety $V(\B)_{N,k}$ is $1$-dimensional.  The remainder of this paper
is devoted to analyzing these PCF curves, beginning in this section
with type $(0,3)$.

From \eqref{foggy}
we have
\begin{equation*}
E_{0,3}= E= D(\ox_1)D(\ox_2)D(\ox_3)=
\begin{bmatrix}\ox_1\ox_2\ox_3+\ox_3+\ox_1 & \ox_1\ox_2+1\\
\ox_2\ox_3+1 & \ox_2\end{bmatrix}.
\end{equation*}
Hence the variety $V_{0,3}\subseteq \A^{0,3}$ is defined by
\begin{equation*}
x_1x_2+1=x_2x_3+1=x_2-x_1x_2x_3-x_3-x_1=0,
\end{equation*}
and so $V_{0,3}$ consists of the two points $\pm(i,i,i)$.
If $\B=\{\beta,\bs\}$ are the roots of
$Ax^2+Bx+C$, then from \eqref{foggy1}
the variety $V(\B)_{0,3}\subseteq \A^{0,3}$ is given by
\begin{align}
A(\ox_2-\ox_1\ox_2\ox_3-\ox_1-\ox_3) & = B(\ox_2\ox_3+1),\nonumber\\
A(-\ox_1\ox_2-1) & = C(\ox_2\ox_3+1),\label{gift}\\
B(-\ox_1\ox_2-1) & = C(\ox_2-\ox_1\ox_2\ox_3-\ox_1-\ox_3).\nonumber
\end{align}
Eliminating $\ox_1$ we can rewrite the Equations \eqref{gift} as the plane curve
\begin{equation}
\label{riot}
V(\B)_{0,3}\colon A(\ox_2^2+1)-B\ox_2(\ox_2\ox_3+1)+C(\ox_2\ox_3+1)^2=0.
\end{equation}
Hence the curve $V(\B)_{0,3}$ has genus 0 and, as we shall see, has a rational
point if and only if $i\in K$ or $B^2-4AC$ is a nonzero sum of squares.

\begin{proposition}
For even $k$, the variety $V(\B)_{N,k}$ always has
a $K$-rational point.
For odd $k$, if the variety $V(\B)_{N,k}$ has a $K$-rational
point, then $B^2-4AC$ is the sum of two squares in $K$.
\end{proposition}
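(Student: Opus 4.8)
The plan is to settle the two parities of $k$ by elementary means, using only the defining equations \eqref{foggy1} of $V(\B)_{N,k}$ together with the determinant identity $\det E_{N,k}=(-1)^k$ from Remark \ref{rindss}.

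\emph{Even $k$.} Here I would simply take the origin $p_0=(0,\dots,0)\in\A^{N,k}$, which lies on $V(\B)_{N,k}$ for every $\B$. Since $D(0)=t$ and $t^2=\Id$, one has $D(0)^k=\Id$ when $k$ is even, so $M([0,\dots,0])=\Id$ for any number of zero entries, and therefore $E_{N,k}(p_0)=M([0,\dots,0])\,\Id\,M([0,\dots,0])^{-1}=\Id$ by \eqref{foggy}. Thus $E_{21}(p_0)=E_{12}(p_0)=(E_{22}-E_{11})(p_0)=0$, so all three equations \eqref{foggy1} hold at $p_0$, giving $p_0\in V(\B)_{N,k}(\OO)\subseteq V(\B)_{N,k}(K)$.

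\emph{Odd $k$.} Set $\Delta:=B^2-4AC$, take any $p\in V(\B)_{N,k}(K)$, and put $E:=E_{N,k}(p)$, a matrix with entries in $K$. If $E$ is scalar, $E=c\,\Id$, then $c^2=\det E=(-1)^k=-1$, so $c=\pm i\in K$; thus $K$ contains $\sqrt{-1}$, whence every element of $K$ --- in particular $\Delta$ --- is a sum of two squares, via $a=\bigl(\tfrac{a+1}{2}\bigr)^2+\bigl(\tfrac{i(a-1)}{2}\bigr)^2$. Otherwise the vector $(E_{21},\,E_{22}-E_{11},\,-E_{12})$ is nonzero, and \eqref{foggy1} asserts precisely that every $2\times2$ minor of $\left(\begin{smallmatrix}A&B&C\\ E_{21}&E_{22}-E_{11}&-E_{12}\end{smallmatrix}\right)$ vanishes; since $(A,B,C)\neq(0,0,0)$ too, the two nonzero rows are proportional, say $(E_{21},\,E_{22}-E_{11},\,-E_{12})=\mu\,(A,B,C)$ with $\mu\in K^\times$ (nonzero since the left-hand side is). Substituting $E_{21}=\mu A$, $E_{22}-E_{11}=\mu B$, $E_{12}=-\mu C$ and using $E_{11}E_{22}=\tfrac14\bigl(\Tr(E)^2-(E_{22}-E_{11})^2\bigr)$,
\begin{align*}
(-1)^k=\det E=E_{11}E_{22}-E_{12}E_{21}
&=\frac{\Tr(E)^2-(E_{22}-E_{11})^2}{4}-E_{12}E_{21}\\
&=\frac{\Tr(E)^2-\mu^2B^2}{4}+\mu^2AC\\
&=\frac{\Tr(E)^2-\mu^2\Delta}{4}.
\end{align*}
Since $(-1)^k=-1$ this yields $\mu^2\Delta=\Tr(E)^2+4$, so that $\Delta=\bigl(\Tr(E)/\mu\bigr)^2+(2/\mu)^2$ is a sum of two squares in $K$.

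The whole content of the odd case is the identity $\mu^2\Delta=\Tr(E)^2+2^2$, and the only point I expect to require care is the degenerate locus $V_{N,k}\subseteq V(\B)_{N,k}$, where $\B(P)$ is undefined and the proportionality step is vacuous --- but that is exactly the locus on which $E$ is scalar, and there $\det E=-1$ already forces $\sqrt{-1}\in K$, so the conclusion is automatic. (One may equally route the odd case through the Fermat--Pell fibration $\pi_{\FP}\colon V(\B)_{N,k}\to\FP_k(\B)$ of Theorem \ref{FPfibration}: a $K$-point of $V(\B)_{N,k}$ maps to a $K$-point $(x_0,y_0)$ of $\FP_k(\B)\colon Cx^2-Bxy+Ay^2=(-1)^kA$, which is always met by $(0,1)$ when $k$ is even, whereas for $k$ odd the relation $Cx_0^2-Bx_0y_0+Ay_0^2=-A$ rearranges, after completing the square in $x_0$, into the same expression of $\Delta$ as a sum of two squares.)
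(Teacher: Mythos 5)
Your proof is correct and takes essentially the same route as the paper: the all-zero point for even $k$ (the paper gets it as the point $(0,0)\in V(\B)_{0,2}$ pushed into $V(\B)_{N,k}$ via the inclusion of PCF varieties), and for odd $k$ the identity $\mu^2(B^2-4AC)=[E_{22}-E_{11}]^2+4E_{12}E_{21}=\Tr(E)^2-4(-1)^k=\Tr(E)^2+2^2$. Your explicit treatment of the scalar case $E=\pm i\,\Id$ (where the proportionality constant degenerates but $i\in K$ forces every element to be a sum of two squares) is a small point the paper leaves implicit, and is a welcome addition.
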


\begin{proof}

We showed in \ref{truck}  that $V(\B)_{0,2}$
always has a $K$-rational point, so we are done in the even $k$ case.
If $\infty\in\B$, then $A=0$ so $B^2-4AC=B^2$
is a square in $K$.
Otherwise, let $P$ be the PCF associated to the
$K$-rational point.
Note that, up to a square, $B^2-4AC$ is equal to
\[[E_{22}-E_{11}]^2(P)+ 4[E_{12}E_{21}](P)=\Tr(E(P))^2-4(-1)^k,\]
which is the sum of two squares if $k$ is odd.
\end{proof}

If $B^2-4AC=R^2+S^2$ with $R,S\in K$, we can obtain an explicit
parametrization of the rational curve $V(\B)_{0,3}$ over $K$.

\begin{proposition}
\label{para}
Suppose $B^2-4AC=R^2+S^2\ne0$ with $R,S\in K$, and $C\ne-A$.
Then all points $(a_1,a_2,a_3)$ on $V(\B)_{0,3}$ over $K$ are given by
\begin{align*}
a_1&=\frac{-2C(t^2+1)+S(t^2-1)+2Rt}{B(t^2+1)-R(t^2-1)+2St},\\
a_2&=\frac{B(t^2+1)-R(t^2-1)+2St}{-S(t^2-1)-2Rt},\\
a_3&=\frac{2A(t^2+1)+S(t^2-1)+2Rt}{B(t^2+1)-R(t^2-1)+2St}
\end{align*}
for some $t\in\BP^1(K)$.\qed
\end{proposition}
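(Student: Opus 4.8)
The plan is to realize $V(\B)_{0,3}$ as a birational model of the Fermat–Pell conic $\FP_3(\B)$, for which the hypothesis $B^2-4AC=R^2+S^2\ne 0$ supplies a $K$-rational point, and then apply the classical $\BP^1$-parametrization of a pointed smooth conic. Concretely, introduce on $\A^{0,3}$ the coordinates $\eta=\ox_2\ox_3+1=E_{21}$ and $\xi=\ox_2=E_{22}$; by Theorem \ref{FPfibration} the map $\pi_{\FP}$ carries $V(\B)_{0,3}$ into $\FP_3(\B)$, and substituting $\ox_2=\xi$, $\ox_2\ox_3=\eta-1$ into \eqref{riot} turns the defining equation into
\[
C\eta^2-B\xi\eta+A\xi^2=-A,
\]
which is precisely $\FP_3(\B)$ written in the coordinates $(\eta,\xi)$ (recall $A\ne 0$ throughout this section). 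Conversely, from a point of $\FP_3(\B)$ one recovers a point of $V(\B)_{0,3}$ by $\ox_2=\xi$, $\ox_3=(\eta-1)/\xi$, and $\ox_1=(-C\eta-A)/(A\xi)$ — reading $\ox_1$ off the second equation of \eqref{gift} — and a short computation shows this triple lies on $V(\B)_{0,3}$ exactly because $(\eta,\xi)$ satisfies the conic. The only way this inverse can fail is $\xi=\ox_2=0$; but setting $\ox_2=0$ in \eqref{riot} forces $A+C=0$, so the hypothesis $C\ne -A$ guarantees $\ox_2\ne 0$ everywhere on $V(\B)_{0,3}$, whence $\pi_{\FP}$ restricts to a morphism $V(\B)_{0,3}\to\FP_3(\B)$ that is injective on $K$-points.

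Next I would exhibit a $K$-point on $\FP_3(\B)$. Multiplying the conic equation by $4A$ and completing the square gives the identity
\[
(2A\xi-B\eta)^2+(2A)^2=(B^2-4AC)\,\eta^2=(R\eta)^2+(S\eta)^2,
\]
so, assuming $S\ne 0$, the point $\eta=2A/S$, $\xi=(B+R)/S$ lies on $\FP_3(\B)$; when $S=0$ one has $B^2-4AC=R^2$ a nonzero square, the form is isotropic, and one argues in the same way. Since $B^2-4AC\ne 0$ the conic is smooth, so projecting the pencil of lines through this base point yields a parametrization $t\mapsto(\eta(t),\xi(t))$ of $\FP_3(\B)(K)$ by $\BP^1(K)$, with $\eta(t)$ and $\xi(t)$ ratios of quadratic polynomials in $t$ over a common quadratic denominator. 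Pulling this back through $\ox_1=(-C\eta-A)/(A\xi)$, $\ox_2=\xi$, $\ox_3=(\eta-1)/\xi$ expresses $a_1,a_2,a_3$ as ratios of quadratics in $t$, and after clearing denominators this is, up to simplification, exactly the displayed parametrization, with the base point corresponding to $t=0$.

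Finally, for the assertion that \emph{all} $K$-points are obtained: composing the bijection $\BP^1(K)\xrightarrow{\sim}\FP_3(\B)(K)$ with the injection $V(\B)_{0,3}(K)\hookrightarrow\FP_3(\B)(K)$ from the first paragraph shows that every $K$-point of $V(\B)_{0,3}$ is the image of a unique $t\in\BP^1(K)$ under the formulas; for the finitely many $t$ with $\xi(t)=0$ (which can occur only if $-A/C$ is a square in $K$) one coordinate of the formula has a pole, consistent with $t$ ranging over $\BP^1(K)$. I expect the genuinely laborious part to be not the geometry but the bookkeeping: checking directly that the displayed triple $(a_1,a_2,a_3)$ satisfies all three equations of \eqref{gift} identically in $t$ — a routine but lengthy polynomial identity in which $R^2+S^2=B^2-4AC$ is used essentially — together with tracking the base point and the exceptional values of $t$ so as to be certain no $K$-rational point of $V(\B)_{0,3}$ is missed. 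One can alternatively bypass the conic entirely and prove the proposition by this direct verification plus exhibiting the inverse $t$ as an explicit rational function of $a_1,a_2,a_3$, thereby establishing birationality.
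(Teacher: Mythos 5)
The paper offers no proof of this proposition --- the \qed\ is attached to the statement itself --- so there is nothing to compare line by line; your argument is correct and is precisely the derivation the surrounding text sets up. Equation \eqref{riot} in the coordinates $(\eta,\xi)=(x_2x_3+1,x_2)$ \emph{is} the Fermat--Pell conic $C\eta^2-B\xi\eta+A\xi^2=-A$ of Theorem \ref{FPfibration}; the hypothesis $C\ne-A$ does force $x_2\ne0$ on the curve, so $\pi_{\FP}$ is invertible there via $x_1=(-C\eta-A)/(A\xi)$, $x_3=(\eta-1)/\xi$ (and with $A\ne0$ the third minor equation of \eqref{gift} follows from the other two, so the inverse really lands on $V(\B)_{0,3}$); and $R^2+S^2=B^2-4AC$ supplies the rational point for the line-pencil parametrization. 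Two remarks that shorten the ``bookkeeping'' you defer. First, the displayed formulas give $\xi=D/N$ and $\eta=2A(t^2+1)/N$ with $D=B(t^2+1)-R(t^2-1)+2St$ and $N=-S(t^2-1)-2Rt$, and the whole verification reduces to the single identity
\begin{equation*}
\bigl(R(t^2-1)-2St\bigr)^2+\bigl(S(t^2-1)+2Rt\bigr)^2=(R^2+S^2)(t^2+1)^2,
\end{equation*}
after which $4AC(t^2+1)^2-2B(t^2+1)D+D^2+N^2=0$ collapses term by term. Second, you can avoid both the explicit base point and the glossed $S=0$ case: the coefficient determinant of the three quadratics $2A(t^2+1)$, $D$, $N$ equals $8A(R^2+S^2)=8A(B^2-4AC)\ne0$, so $t\mapsto[2A(t^2+1):D:N]$ is a quadratic Veronese embedding of $\BP^1$ whose image is an irreducible conic contained in, hence equal to, the smooth projective closure of $\FP_3(\B)$; bijectivity on $K$-points is then automatic. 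The only hypothesis you should record explicitly is $A\ne0$ (needed to eliminate $x_1$, to define $\FP_3(\B)$, and implicitly in the proposition's formulas), which the paper assumes throughout its Fermat--Pell discussion.
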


For example if $\beta=\alpha_1=\sqrt{2}$ we get the parametrization
\[
(a_1,a_2,a_3)=\left(-\frac{t^2+2}{t^2-2},-\frac{t^2-2}{t^2-4t+2},
-\frac{2t}{t^2-2}\right)
\]
by setting $A=1$, $B=0$, $C=-2$, $R=2$, $S=-2$ and adjusting $t\to t-1$.
Four integral solutions are $(a_1,a_2,a_3)=\pm(3,-1,2)$ ($t=1,2$)
                       and $\pm(1,1,0)$ ($t=0,\infty$).
The conditions $B^2\ne4AC$ and $C\ne-A$ are necessary to make the curve
geometrically irreducible.
When $C=-A\ne0$, the parametrization above,
$(a_1,a_2,a_3)=(t,-(2At+B)/(At^2+Bt-A),t)$, only gives one component.
The other component is given by $a_2=0, a_3=-B/A-a_1$.
When $B^2-4AC=0$ both geometric components are parametrized above,
but one with $R=Si\ne0$ and the other with $R=-Si\ne0$.

Let
 $\OO\subset\C$ be the ring of $S$-integers in a number field $K$
and suppose that $\B=\{\beta, \bs\}$ is the roots of 
$Ax^2+Bx+C\in\OO[x]$.
\begin{theorem}
\label{great}
If $AC(A+C)(B^2-4AC)\ne0$, then
$V(\B)_{0,3}(\OO)$ is finite.
Hence any $\beta\in\overline{\Q}\setminus\OO$ with $\beta\bs\ne-1$ has
only finitely many $\OO$-PCFs of type $(0,3)$.
\end{theorem}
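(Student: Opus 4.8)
The plan is to realize $V(\B)_{0,3}$ as a genus-$0$ curve fibered over the Fermat--Pell conic $\FP_3(\B)$ via $\pi_{\FP}$, and then pull Siegel's theorem back through this fibration. Recall from \eqref{riot} that $V(\B)_{0,3}$ is the plane curve
\[
A(\ox_2^2+1)-B\ox_2(\ox_2\ox_3+1)+C(\ox_2\ox_3+1)^2=0,
\]
which has genus $0$; so Siegel's theorem does not apply to it directly and we must instead exhibit at least three points ``at infinity'' (that is, in the boundary of a suitable smooth completion, or equivalently poles among the coordinate functions) on which we can control the $\OO$-integrality. The hypothesis $AC(A+C)(B^2-4AC)\neq 0$ is exactly what is needed to keep the curve geometrically irreducible: $B^2-4AC\neq0$ rules out the double-root degeneration, $A+C\neq0$ rules out the reducible case $C=-A$ discussed after Proposition \ref{para}, and $AC\neq0$ keeps $\infty\notin\B$ and keeps $\FP_3(\B)$ a genuine conic. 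With those in force, $V(\B)_{0,3}$ is a smooth rational curve, so $V(\B)_{0,3}(\OO)$ finite is equivalent to there being at least three boundary points on which some coordinate function acquires a pole, i.e. to the curve not being isomorphic to $\mathbb{A}^1$ or $\mathbb{G}_m$ in a way compatible with the $S$-integral structure.

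First I would use the parametrization $(a_1,a_2,a_3)(t)$ of Proposition \ref{para} (valid since $B^2-4AC\neq0$ and $C\neq -A$): it identifies $V(\B)_{0,3}$ with $\BP^1_t$, and the three coordinate functions $a_1,a_2,a_3$ are each a ratio of two quadratics in $t$. The key point is to count the poles. The denominator of $a_1$ and of $a_3$ is $B(t^2+1)-R(t^2-1)+2St$, which has two roots in $\overline{\Q}$ (both distinct from the roots of the numerators, by geometric irreducibility), and the denominator of $a_2$ is $-S(t^2-1)-2Rt = -(R^2+S^2\neq0\text{-normalized})$, again with two roots; moreover one also gets a pole or not at $t=\infty$ depending on leading coefficients. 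The upshot is that at least three distinct points of $\BP^1_t$ lie in the ``boundary'' $\{a_i=\infty\text{ for some }i\}$ — concretely the two roots of $B(t^2+1)-R(t^2-1)+2St$ together with the two roots of $S(t^2-1)+2Rt$, which cannot all coincide when $B^2-4AC=R^2+S^2\neq0$. Since on $V(\B)_{0,3}(\OO)$ every $a_i\in\OO$, any integral point avoids all of these $\geq 3$ boundary points, so it lies in the complement of $\geq 3$ points of $\BP^1$; by Siegel's theorem (in the $S$-integral form: an affine curve of genus $0$ with at least three points at infinity has only finitely many $S$-integral points) we conclude $V(\B)_{0,3}(\OO)$ is finite. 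The final sentence follows immediately: by \ref{gerd}, $\beta\bs\neq-1$ forces $C\neq-A$ after scaling $Ax^2+Bx+C$ so that this is the minimal polynomial of $\beta$, and $\beta\notin\OO$ with $\beta$ a quadratic irrational gives $B^2-4AC\neq0$ and $AC\neq0$; so the hypothesis $AC(A+C)(B^2-4AC)\neq0$ holds and an $\OO$-PCF of $\beta$ of type $(0,3)$ is a point of $V(\B)_{0,3}(\OO)$.

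A cleaner way to organize the same argument, which I would likely present instead, is to compose $\pi_{\FP}:V(\B)_{0,3}\to \FP_3(\B)$ with the obvious inclusion and note that $\FP_3(\B)$ is itself a genus-$0$ curve; the fiber over a point is finite, and the ``vertical'' behavior is controlled because $E_{21},E_{22}$ are polynomials in the $a_i$ with $\OO$ coefficients — but this route needs care about the points of $\FP_3(\B)$ at infinity, so the direct parametrization count above is the more self-contained path.

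\textbf{Main obstacle.} The genuine work is the pole count: one must verify that the three (or more) ``boundary'' points of $\BP^1_t$ — the roots of the two denominator quadratics — are genuinely distinct from each other and from the numerator roots precisely under the stated non-vanishing hypothesis $AC(A+C)(B^2-4AC)\neq0$, and that none of them can be made to coincide by a clever choice of $R,S$ (recall $R,S$ are only determined up to the action of $\mathrm{SO}_2$ on $R^2+S^2=B^2-4AC$). Equivalently, one must rule out that $V(\B)_{0,3}\setminus\{a_1,a_2,a_3\text{ all finite}\}$ has only one or two points, which would make the integral-point set potentially infinite. This is elementary — it is the discriminant bookkeeping for a handful of explicit quadratics — but it is the crux, and the four factors $A$, $C$, $A+C$, $B^2-4AC$ each earn their place there.
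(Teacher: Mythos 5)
Your proposal is correct and follows essentially the same route as the paper: the paper's proof likewise invokes the explicit parametrization of Proposition \ref{para} to exhibit four geometric points at infinity on the normalization of the rational curve $V(\B)_{0,3}$ and then concludes by Siegel's theorem (Theorem \ref{mousedeer}). The pole-counting and discriminant bookkeeping you flag as the main obstacle (the roots of the two denominator quadratics being distinct exactly when $AC(A+C)(B^2-4AC)\neq 0$) is precisely the content the paper leaves implicit in its one-line assertion of four points at infinity.
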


\begin{proof}
If $AC(A+C)(B^2-4AC)\ne0$ then the curve has four geometric points at
infinity on the normalization using the parametrization in Proposition
\ref{para}.
Hence, by Siegel's Theorem~\ref{mousedeer} below, 
the curve has finitely many $\OO$-points.
\end{proof}

\begin{theorem}[Siegel {\cite{siegel}, \cite[p.~95]{serre-mw}}]\label{mousedeer}
If an affine rational curve over $\OO$ has three or more geometric points at
infinity after normalization, then it has finitely many $\OO$-points.
An affine curve of positive genus always has finitely many
$\OO$-points.
\end{theorem}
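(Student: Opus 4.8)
The statement is the classical theorem of Siegel (in the $S$-integral form of Siegel--Mahler), so the plan is to reconstruct the standard proof, following \cite{serre-mw}. Write $\widetilde C$ for the smooth projective model of the affine curve and $S_\infty\subseteq\widetilde C(\overline{\Q})$ for the finite set of points at infinity. First I would pass to a clean setting: replacing $K$ by a finite extension and $S$ by a larger finite set of places, I may assume all points of $S_\infty$ are $K$-rational, so an $\OO$-integral point of the affine curve becomes a point $P\in\widetilde C(K)$ which, away from the primes of $S$, stays $v$-adically bounded away from every point of $S_\infty$. Then I split into two cases.

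\emph{Positive genus.} Fix a base point and embed $\widetilde C\hookrightarrow J$ into the Jacobian. I would invoke the Mordell--Weil theorem: $J(K)$ is finitely generated. Fix $m\ge 2$; finiteness of $J(K)/mJ(K)$ lets me write each integral point as $P=[m]Q+T$, with $T$ in a fixed finite set and $Q$ a point of bounded degree on one of finitely many curves of bounded genus (a translate of the pullback of $\widetilde C$ by $[m]$). The quantitative heart is the theory of heights on abelian varieties: the N\'eron--Tate height is a positive-definite quadratic form, so $\widehat h(Q)=m^{-2}\widehat h(P)$ up to bounded error. Choosing a nonconstant $x$ on $\widetilde C$, if infinitely many integral points existed a subsequence would have $h(P)\to\infty$ and would converge, at some fixed place $v$, to a point $\xi$ of $\widetilde C$; expanding $x$ at $\xi$ forces $x(Q)$ --- an algebraic number of bounded degree --- to approximate a fixed algebraic number to an order growing like $h(P)\asymp m^2\widehat h(Q)$, i.e.\ with approximation exponent $\gg m^2$ in the height of $x(Q)$. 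For $m$ large this violates Roth's theorem, so there are finitely many integral points. (An affine curve of positive genus has at least one point at infinity, and the same argument then gives the second assertion of the theorem.)

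\emph{Rational curve, at least three points at infinity.} Here $\widetilde C\cong\BP^1$, and after a M\"obius change of coordinate the coordinate $u$ has $0,1,\infty$ among its points at infinity. Integrality forces the numerator and denominator of both $u$ and $u-1$ to be supported on a fixed finite set of primes, so after enlarging $S$ once more $u\in\OO^\times$ and $1-u\in\OO^\times$: a solution of the $S$-unit equation $X+Y=1$. Finiteness of the solutions of $X+Y=1$ in $S$-units (Siegel--Mahler, again via the Thue--Siegel--Roth theorem) finishes the proof. Alternatively, $\BP^1$ minus three or more points is hyperbolic and carries a finite \'etale cover of genus $\ge 1$, to which integral points lift by the Chevalley--Weil theorem, reducing to the first case. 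For the curve $V(\B)_{0,3}$ studied here, the hypothesis $AC(A+C)(B^2-4AC)\ne0$ places us in this rational case with four points at infinity, via the parametrization of Proposition \ref{para}.

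\emph{Main obstacle.} The real work is in the positive-genus case. It stands on two deep external theorems --- Mordell--Weil (finite generation of $J(K)$, itself via weak Mordell--Weil and heights) and Roth's inequality $|\alpha-p/q|>q^{-2-\varepsilon}$ --- and Roth's ineffectivity is precisely why Siegel's theorem bounds only the \emph{number} of integral points, not their size. The remaining care is in the bookkeeping: isolating one place $v$ at which an infinite subsequence accumulates and converting $v$-adic closeness into a global height lower bound through the product formula, and controlling heights under pullback by $[m]$ via the parallelogram law for the N\'eron--Tate height. Those steps, rather than any formal manipulation, are where the proof is genuinely hard.
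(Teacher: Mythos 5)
The paper does not prove Theorem~\ref{mousedeer} at all: it is quoted as a classical external result, with references to Siegel's original paper and to Serre's \emph{Lectures on the Mordell--Weil Theorem}, and is then simply applied as a black box in Theorems~\ref{great}, \ref{rentdue}, and~\ref{tadpole}. So there is no in-paper argument to compare against; what you have written is a reconstruction of the standard proof, and as a sketch it is accurate and follows essentially the route of the cited source. The two halves are correctly separated: in positive genus you use Mordell--Weil, the descent $P=[m]Q+T$, the quasi-parallelogram law $\widehat h(Q)\approx m^{-2}\widehat h(P)$, and the conversion of $v$-adic accumulation at a point at infinity into an approximation statement contradicting Roth for $m$ large; in genus zero with at least three points at infinity you reduce, after enlarging $K$ and $S$ and normalizing three of the points to $0,1,\infty$, to the finiteness of solutions of the $S$-unit equation $X+Y=1$ (or alternatively lift by Chevalley--Weil to a cover of positive genus). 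Your closing remark correctly identifies where the real content lies (Mordell--Weil plus Roth, and the resulting ineffectivity), which is exactly why the paper treats this as a citation rather than reproving it.

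Two small points of care if you were to write this out in full. First, in the positive-genus case the accumulation point $\xi$ must be taken among the poles of the chosen regular function $x$ (equivalently, among the points at infinity of the affine model): integrality of $x(P)$ together with $h(x(P))\to\infty$ forces $|x(P)|_v\to\infty$ along a subsequence for some fixed $v$, which is what places $\xi$ at infinity; your phrasing ``a point $\xi$ of $\widetilde C$'' glosses over this. Second, in the descent step the point $Q$ need not be $K$-rational; one either enlarges $K$ to contain the $m$-torsion and the relevant translates, or works with $x(Q)$ as an algebraic number of bounded degree (as you do) and applies Roth in that generality. Neither issue is a gap in the idea, only in the bookkeeping you yourself flagged as the delicate part.
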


\begin{remark1}
In the excluded case where $AC(A+C)(B^2-4AC)=0$ it is possible to
get infinitely many $\OO$-points.
For example, let $A=1$, $B=-1$, $C=0$, and $\OO=\Z[\sqrt{2}]$. Then
\[(x_1,x_2,x_3)=((1+\sqrt{2})^n,-(-1+\sqrt{2})^n,-(1+\sqrt{2})^{2n}+(1+\sqrt{2})^n-1) \]
for $n\in\Z$ gives infinitely many $\Z[\sqrt{2}]$-points on $V(\{1,0\})_{0,3}$ corresponding
to infinitely many $\Z[\sqrt{2}]$-PCFs
$[\overline{x_1,x_2,x_3}]$ of type $(0,3)$ with value~$0$.
\end{remark1}

The Fermat-Pell conic $\FP_{3}(\B)$ is given \eqref{Fermat}
by $Cx^2-Bxy+Ay^2=-A$ and the map $\pi_{\FP}:V(\B)_{0,3}\rightarrow
\FP_3(\B)$ by $\pi_{\FP}(x_1, x_2, x_3)=(x_2x_3+1, x_2)$ from \eqref{FPmap}.

\subsection{\texorpdfstring{\protect{\boldmath{$\Z$}}}{Z}-Points on 
\texorpdfstring{\protect{\boldmath{$V(2)_{0,3}$}}}{V(2)(0,3)}}

For $\beta=\sqrt{2}=\alpha_1$, we have  $A=1$, $B=0$, and $C=-2$.
So Equations \eqref{gift} say $V(2)_{0,3}$ is given by
\begin{align*}
\ox_2-\ox_1\ox_2\ox_3-\ox_1-\ox_3 & = 0\\
 \nonumber -\ox_1\ox_2-1 & = -2(\ox_2\ox_3+1),
\end{align*}
so in particular
\begin{equation*}
\ox_2(\ox_1-2\ox_3)=1.
\end{equation*}
For $\ox_1,\,\ox_2, \,\ox_3\in\Z$ we must therefore have $\ox_2=\pm 1$.
Hence we have:
\begin{proposition}
\begin{enumerate}
\item  There are precisely four $\Z$-points on
$V(2)_{0,3}$.
They are 
\[
(\ox_1,\ox_2,\ox_3) = \pm (1,1,0), \, \pm(3,-1,2).
\]
\item  There are exactly two $\Z$-PCFs
for $\sqrt{2}$ of type $(0,3)$. They are $\sqrt{2}=[\overline{3,-1,2}]$
and $\sqrt{2}=[\overline{1,1,0}]$,   which is equivalent to the classical
$\sqrt{2}=[1, \overline{2}]$ by Lemma \textup{\ref{caribou}}.
\end{enumerate}
\end{proposition}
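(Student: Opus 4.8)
The plan is to finish the reduction already set up for $V(2)_{0,3}$. We know that this curve (with $(A,B,C)=(1,0,-2)$) is cut out by $\ox_2-\ox_1\ox_2\ox_3-\ox_1-\ox_3=0$ together with the rearranged second equation $\ox_2(\ox_1-2\ox_3)=1$. Over $\Z$ the latter immediately forces $\ox_2=\pm1$ and $\ox_1-2\ox_3=\ox_2$, i.e.\ $\ox_1=2\ox_3+\ox_2$. Substituting this into the first equation collapses it to a quadratic in the single variable $\ox_3$ which factors completely: $-2\ox_3(\ox_3+2)=0$ when $\ox_2=1$, and $2\ox_3(\ox_3-2)=0$ when $\ox_2=-1$. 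Hence $\ox_3\in\{0,-2\ox_2\}$, which gives exactly the four points $\pm(1,1,0)$ and $\pm(3,-1,2)$; each satisfies both defining equations by construction. This proves part~(a).

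For part~(b), each $\Z$-point $p$ determines a PCF $P(p)=[\overline{a_1,a_2,a_3}]$ whose formal quadratic $\Quad(P)$ is a nonzero scalar multiple of $x^2-2$ (none of the four points is $\pm(i,i,i)$, so $E(P)$ is not scalar), hence $\B(P)=\{\sqrt2,-\sqrt2\}$ and, if $P(p)$ converges, $\ob(P(p))\in\{\sqrt2,-\sqrt2\}$. Negating $p$ replaces each partial quotient $c$ by $-c$; since $-\phi_c(-z)=\phi_{-c}(z)$, this negates every convergent and therefore the value. So it suffices to show that the PCFs at the two points $(1,1,0)$ and $(3,-1,2)$ both converge to $+\sqrt2$; then their negatives converge to $-\sqrt2$, and the count is exactly two. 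At $(1,1,0)$, repeated application of Lemma~\ref{caribou} (collapsing the recurring block $\ldots,1,0,1,\ldots$) identifies $[\overline{1,1,0}]$ with the classical $[1,\overline2]=\sqrt2$. At $(3,-1,2)$, compute from \eqref{fed1} that $E=D(3)D(-1)D(2)=\left[\begin{smallmatrix}-1&-2\\-1&-1\end{smallmatrix}\right]$; then $E\ne\pm i^kI$, $\Tr(E)^2=4$ so condition~(c) of Theorem~\ref{bactrian} fails, and INEQ~\ref{grits} fails since the $(2,1)$-entries of $M([3,-1,2])$, $M([-1,2,3])$, $M([2,3,-1])$ are $-1$, $7$, $-2$, all nonzero. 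Hence $P(p)$ converges with value $\beta_+(E)=\sqrt2$, completing part~(b).

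I expect no real obstacle: the integral points are pinned down at once by the ``norm one'' relation $\ox_2(\ox_1-2\ox_3)=1$, and the convergence bookkeeping in part~(b) is routine. The only items deserving a little care are the presence of the partial quotient $0$ in $[\overline{1,1,0}]$, handled cleanly by Lemma~\ref{caribou}, and the explicit check (via Theorem~\ref{bactrian}) that $[\overline{3,-1,2}]$ genuinely converges to $\sqrt2$ rather than being an extraneous divergent point or one converging to $-\sqrt2$.
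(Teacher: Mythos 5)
Your proposal is correct and follows the same route as the paper: the paper likewise derives $\ox_2(\ox_1-2\ox_3)=1$, concludes $\ox_2=\pm1$ to pin down the four integral points, and then defers the convergence check to Algorithm \ref{con}. You have simply written out the details the paper leaves implicit (the substitution $\ox_1=2\ox_3+\ox_2$ and the resulting quadratic in $\ox_3$, the sign symmetry pairing the limits $\pm\sqrt2$, and the explicit verification of the conditions of Theorem \ref{bactrian} at $(3,-1,2)$), all of which check out.
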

\noindent Use Algorithm \ref{con} to verify that the limits exist and
are as stated.

\subsection{\texorpdfstring{\protect{\boldmath{$\Z[\sqrt{2}]$}}}
{Z[sqrt(2)]}-Points on 
\texorpdfstring{\protect{\boldmath{$V(2+\sqrt{2})_{0,3}$}}}{V(2+sqrt(2))(0,3)}}
\label{sec22}

For $\beta=\alpha_{2}$, we have $A=1$, $B=0$, and
\begin{equation}
\label{rinsed}
C=-(2+\alpha_1)=-uw, \mbox{ where }
w=\sqrt{2}, \,\, u=1+w.
\end{equation}
Equation \eqref{riot} for $V(2+\sqrt{2})_{0,3}$ is 
\begin{equation}
\label{pots}
(2+\sqrt{2})(1+\ox_2\ox_3)^2-\ox_2^2=1,
\end{equation}
with $\ox_1$ determined by $\ox_1\ox_2+1=(2+w)(\ox_2\ox_3+1)$.

Setting $b=\ox_2$ and $a=\ox_3$ we can rewrite \eqref{pots} as 
\begin{equation}
\label{rattle}
uw(ab+1)^2=b^2+1,\quad\mbox{or}\quad b(uwa^2b+2uwa-b)=-u .
\end{equation}
Hence $b$ must be a unit in the ring $\Z[\sqrt{2}]$, so
\begin{equation*}
b=\pm u^n
\end{equation*}
for some $n\in\Z$ and $\|b\|=\pm 1$, where
for any $t=r+s\sqrt{2}\in \Q(\sqrt{2})$ we define 
\[
\|t\|=\N_{\Q(\sqrt{2})/\Q}(t)=r^2-2s^2.
\]

First suppose $\|b\|=1$.  Then $b=m+2nw$ with
$m, n\in\Z$, $m$ odd, implying that  $\| b^2 +1\|\equiv 4\bmod8$.
But \eqref{rattle} then gives
\begin{equation*}
  2\|ab+1\|^2=\| b^2+1\|\equiv 4 \pmod8 .
\end{equation*}
However, $4 \bmod8$ is not twice a square, so $\|b\|=1$ is impossible,
and we must have $\| b\|=-1$. If $(a,b)$ is a solution to \eqref{rattle}, so is
$-(a,b)$.  So, without loss of generality, suppose $b=u^{2j+1} = m+nw$ with $j\in\Z$,
so $b+b^{-1}=2nw$.  Then
\begin{equation}
  \label{video}
  \frac{b}{uw}(b+b^{-1})=\frac{u^{2j+1}}{uw}(2nw)=u^{2j}2n .
\end{equation}
However, by \eqref{rattle} we have
\begin{equation*}
  (ab+1)^2=\frac{1}{uw}(b^2+1)=\frac{b}{uw}(b+b^{-1}) .
\end{equation*}
Hence \eqref{video} shows that $u^{2j}2n$ must be a square in $\Z[w]$, which means
that the integer $n$ is either a square or twice a square.
The latter is impossible since
\[m+nw=b=u(u^2)^j=(1+w)(3+2w)^j\equiv(1+w)\bmod2.\]
Hence $n=y^2$ and
\[\|b\|=(m+y^2w)(m-y^2w)=m^2-2y^4=-1.\]
We thus recover Ljunggren's equation \cite{ljunggren}  with the complete
solution set over $\Z$ given by $(m,y)=(\pm 1, \pm 1)$ and $(m,y)=
(\pm 239, \pm 13)$. This corresponds to $b=x_2=u^{\pm 7}$ and $b=u^{\pm 1}$.
Hence we have
\begin{theorem}
  \label{trails}

  There are precisely sixteen $\Z[\sqrt{2}]$-points on the
  curve $V(2+\sqrt{2})_{0,3}$.  With $w=\sqrt{2}$
  and $u=1+w$, they are
  \begin{align*}
    \pm(a_1, a_2, a_3) =
    & (3-w, 1+w=u, 3-2w), (1+w, -1-w=-u, 1),\\
    & (1+w, -1+w=u^{-1}, -1), (3+3w, 1-w=-u^{-1}, 1+2w),\\
    & (57-39w, 239+169w=u^7, -73+52w) ,\\
    &(-421+299w, -239-169w=-u^7, -551+390w) ,\\
    & (203+143w, -239+169w=u^{-7}, -109-78w) ,\\
    & (681+481w, 239-169w=-u^{-7}, 369+260w) .
  \end{align*}
  \end{theorem}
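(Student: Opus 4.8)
The plan is to reduce Theorem~\ref{trails} to Ljunggren's equation, as the preceding paragraphs already do most of the work, and then to verify that each solution of Ljunggren's equation does indeed yield the asserted $\Z[\sqrt 2]$-points. First I would recall from the paragraphs just above that a $\Z[\sqrt 2]$-point $(x_1,x_2,x_3)$ on $V(2+\sqrt 2)_{0,3}$ is determined, via \eqref{riot}, by the pair $(b,a)=(x_2,x_3)$ subject to \eqref{rattle}, namely $uw(ab+1)^2=b^2+1$, with $x_1$ recovered from $x_1x_2+1=(2+w)(x_2x_3+1)$; and that \eqref{rattle} forces $b$ to be a unit of $\Z[\sqrt 2]$, hence $b=\pm u^n$ with $\|b\|=\pm 1$. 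The congruence argument modulo $8$ rules out $\|b\|=1$, so $\|b\|=-1$, i.e.\ $b=\pm u^{2j+1}$; replacing $(a,b)$ by $-(a,b)$ if necessary we take $b=u^{2j+1}=m+nw$. The identity \eqref{video} then shows $u^{2j}\cdot 2n=(ab+1)^2$ is a square in $\Z[w]$, and since $u^{2j}$ is already a square this means $2n$ is a square in $\Z[w]$, so $n$ is a square or twice a square in $\Z$; the parity observation $b\equiv 1+w\bmod 2$ kills the ``twice a square'' case, leaving $n=y^2$ and hence $m^2-2y^4=\|b\|=-1$, which is exactly Ljunggren's equation.

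Next I would invoke Ljunggren's theorem \cite{ljunggren}: the integer solutions of $m^2-2y^4=-1$ are $(m,y)=(\pm 1,\pm 1)$ and $(\pm 239,\pm 13)$. Translating back, $b=m+y^2 w$ with $m^2-2y^4=-1$ gives $b=\pm(1+w)=\pm u$, $b=\pm(-1+w)=\pm u^{-1}$, $b=\pm(239+169w)=\pm u^{7}$, and $b=\pm(-239+169w)=\pm u^{-7}$ — here one checks the small identities $u^{-1}=-1+w$, $u^7=239+169w$, $u^{-7}=-239+169w$, which are routine powers of $1+w$ in $\Z[\sqrt 2]$. For each such $b$ one then solves \eqref{rattle} for $a$: from $uw(ab+1)^2=b^2+1$ and $b+b^{-1}=2nw=2y^2w$ one gets $(ab+1)^2=u^{2j}2y^2$, hence $ab+1=\pm u^{j}y\sqrt 2\cdot(\text{appropriate unit})$, solving linearly for $a=x_3$, and finally $x_1=((2+w)(x_2x_3+1)-1)/x_2$; since $x_2=b$ is a unit this is automatically in $\Z[\sqrt 2]$. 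Carrying this out for each of the eight values $\pm u,\pm u^{-1},\pm u^{7},\pm u^{-7}$ of $b$ produces exactly the sixteen triples listed (the $\pm$ in the statement being the symmetry $(a_1,a_2,a_3)\mapsto -(a_1,a_2,a_3)$), and one double-checks that each listed triple satisfies \eqref{pots}.

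The only place where genuine care is needed — and the step I expect to be the main obstacle — is bookkeeping the signs and the choice of square root when passing from $(ab+1)^2=u^{2j}2y^2$ back to $a$: a priori there are two square roots for each $b$, so eight values of $b$ could give up to sixteen values of $a$, and one must confirm that after accounting for the overall sign symmetry $(a,b)\mapsto-(a,b)$ exactly the stated sixteen points remain and no spurious ones appear. Concretely, for a fixed $b=u^{2j+1}$ the two sign choices for $ab+1$ give two values of $a$ differing by $-2/b$, and one verifies these are the two triples listed with that value of $x_2$ (e.g.\ for $b=u$ the pair $(a_1,a_2,a_3)=(1+w,-1-w,1)$ versus\ldots the partner with $x_2=-u$, etc.) — so the count is consistent. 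The remaining verifications (the modulo-$8$ congruence, the parity argument, and that each of the sixteen triples lies on the curve) are the short computations already sketched in the text preceding the theorem, so I would simply reference them rather than repeat them.
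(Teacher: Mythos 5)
Your proposal is correct and follows essentially the same route as the paper: reduce via \eqref{rattle} to $b$ being a unit, eliminate $\|b\|=1$ by the mod-$8$ congruence, use \eqref{video} and the parity of $b$ to force $n=y^2$, invoke Ljunggren's theorem, and translate the four solution pairs back into the eight values $\pm u^{\pm1},\pm u^{\pm7}$ of $b=x_2$ with the two square-root choices accounting for the sixteen points. One small slip in your sign-bookkeeping remark: the two values of $a$ for a fixed $b$ are the roots of $uwb^2a^2+2uwba+(uw-b^2-1)=0$ and hence \emph{sum} to $-2/b$ rather than differing by it, but this does not affect the count or the conclusion.
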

  \begin{corollary}
    \label{rinds}

    There are precisely eight $\Z[\sqrt{2}]$-PCFs of
    $\alpha_2=\sqrt{2+\sqrt{2}}$  of type $(0,3)$.
    They are 
    \begin{align*}
\sqrt{2+\sqrt{2}}
     & = [\overline{3-\sqrt{2},\,1+\sqrt{2},\, 3-2\sqrt{2}}],\\
     & = [\overline{1+\sqrt{2},\,-1-\sqrt{2},\,1}],\\
     & = [\overline{1+\sqrt{2},\,-1+\sqrt{2},\, -1}],\\
     & = [\overline{3+3\sqrt{2},\,1-\sqrt{2},\, 1+2\sqrt{2}}],\\
     & = [\overline{57-39\sqrt{2},\,239+169\sqrt{2},\, -73+52\sqrt{2}}],\\
     & = [\overline{-421+299\sqrt{2},\,-239-169\sqrt{2},\, -551+390\sqrt{2}}],\\
     & = [\overline{203+143\sqrt{2},\,-239+169\sqrt{2},\, -109-78\sqrt{2}}],\\
     & = [\overline{681+481\sqrt{2},\,239-169\sqrt{2},\, 369+260\sqrt{2}}].
    \end{align*}
    \end{corollary}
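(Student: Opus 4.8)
The plan is to read the answer off Theorem~\ref{trails} using the convergence test of Algorithm~\ref{con}. By Theorem~\ref{trails} the set $V(2+\sqrt2)_{0,3}(\Z[\sqrt2])$ consists of the sixteen points displayed there, grouped into eight pairs $\pm(a_1,a_2,a_3)$; and any $\Z[\sqrt2]$-PCF of $\alpha_2$ of type $(0,3)$ must come from one of these points, since if $[\overline{a_1,a_2,a_3}]$ over $\Z[\sqrt2]$ has value $\alpha_2$ then $\alpha_2$ is a root of $\Quad(P)$, which is a nonzero polynomial of degree $\le2$ over $\Z[\sqrt2]$ divisible by $x^2-(2+\sqrt2)$, hence a scalar multiple of it, putting $(a_1,a_2,a_3)$ on $V(2+\sqrt2)_{0,3}$. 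First I would record the reflection symmetry: from $\phi_{-c}(z)=-\phi_c(-z)$ it follows (writing $\psi(z)=-z$, so $\phi_{-c}=\psi\circ\phi_c\circ\psi$) that every convergent of $[\overline{-a_1,-a_2,-a_3}]$ is the negative of the corresponding convergent of $[\overline{a_1,a_2,a_3}]$, so the two PCFs of a pair converge together and to opposite values. Since every $p\in V(2+\sqrt2)_{0,3}(\Z[\sqrt2])$ has $\B_{0,3}(p)=\{\alpha_2,-\alpha_2\}$ (the scalar locus $V_{0,3}=\{\pm(i,i,i)\}$ misses $\Z[\sqrt2]^3$) and $\alpha_2\neq0$, at most one member of each pair has value $\alpha_2$; so there are at most eight such PCFs, and the sixteen distinct points give sixteen distinct type-$(0,3)$ PCFs.

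For the matching lower bound I would run Algorithm~\ref{con} on the eight tuples listed in the corollary. Writing $E=E(P)$ and $(E_{21},E_{22})=(a_2a_3+1,\,a_2)=\pi_{\FP}(p)$, Theorem~\ref{FPfibration} gives $E_{22}^2-(2+\sqrt2)E_{21}^2=-1$; in particular $E_{21}\neq0$, since otherwise $a_2^2=-1$. I would then rule out the three divergence conditions of Theorem~\ref{bactrian}. Case~\ref{bactrian}\ref{bactrian1} ($E=\pm i^3I$) is impossible because $E$ is a real nonscalar matrix. Case~\ref{bactrian}\ref{bactrian3} requires $(-1)^3\Tr(E)^2\in[0,4)$, but $\Tr(E)^2\in\Z[\sqrt2]\subseteq\R$ is $\geq0$, so it could hold only if $\Tr(E)=0$; then $E$ has eigenvalues $\pm1$, and by Proposition~\ref{returned} the two numbers $E_{21}(\pm\alpha_2)+E_{22}$ are $1$ and $-1$ in some order, forcing $E_{22}=0$ and $E_{21}=\pm\alpha_2^{-1}\notin\Z[\sqrt2]$ (as $\alpha_2\notin\Q(\sqrt2)$, because $[\Q(\alpha_2):\Q]=4$) --- a contradiction. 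The same computation together with the Fermat--Pell relation shows $E_{21}\alpha_2+E_{22}\neq\pm1$, so the borderline test $|E_{21}\alpha_2+E_{22}|=1$ in Algorithm~\ref{con} is never met. Finally INEQ~\ref{grits} fails: for $j=0,1,2$ the quantity $M([a_{j+1},a_{j+2},a_{j+3}])_{21}$ (indices mod~$3$) is the lower-left entry of $D(a_{j+1})D(a_{j+2})D(a_{j+3})$, a matrix conjugate to $E$ by a product of the $D(a_i)$; the two roots of its associated quadratic are the images of $\pm\alpha_2$ under a M\"obius transformation with $\Z[\sqrt2]$-coefficients, hence finite because $\pm\alpha_2\notin\Q(\sqrt2)$, so that entry is nonzero.

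With divergence excluded, Algorithm~\ref{con} returns $\ob(P)=\alpha_2$ exactly when $|E_{21}\alpha_2+E_{22}|>1$ and $-\alpha_2$ otherwise, so I would finish by checking $|E_{21}\alpha_2+E_{22}|>1$ for each of the eight listed tuples --- a short arithmetic verification (for instance numerically with $\alpha_2=1.8477\dots$: for $(3-\sqrt2,1+\sqrt2,3-2\sqrt2)$ one has $(E_{21},E_{22})=(\sqrt2,\,1+\sqrt2)$, so $|\sqrt2\,\alpha_2+1+\sqrt2|\approx5.03>1$). This exhibits eight $\Z[\sqrt2]$-PCFs of $\alpha_2$ of type $(0,3)$, which with the ``at most eight'' bound are all of them. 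Given Theorem~\ref{trails}, the only genuinely delicate point is confirming that INEQ~\ref{grits} and the modulus-one case never occur; everything else is the reflection symmetry of the pairs of Theorem~\ref{trails} together with eight numerical inequalities, all routine.
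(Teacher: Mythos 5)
Your proof is correct and follows the same route as the paper, whose entire argument is ``Again we check convergence using Algorithm \ref{con}'' applied to the sixteen points of Theorem \ref{trails}; you have simply executed that one-line instruction in full detail (the $\pm$ reflection symmetry pairing the sixteen points into eight PCFs of $\alpha_2$ and eight of $-\alpha_2$, the elimination of the three divergence conditions, and the final inequality $\left|E_{21}\alpha_2+E_{22}\right|>1$). Nothing in your write-up deviates from the paper's intended argument, and your checks of conditions \ref{bactrian1}, \ref{bactrian3}, and INEQ \ref{grits} are sound.
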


\begin{proof}

Again we check convergence using Algorithm \ref{con}.
\end{proof}

The last two PCFs converge extremely slowly.
For these $[\overline{a_1, a_2, a_3}]$, we have
$|(a_1a_2+1)\alpha_2+a_1|\approx 1.002094$,
which means every extra significant digit requires 
$\approx 3/(2\log_{10}(1.002094))\approx 1651$
more convergents by Remark \ref{pudu}\ref{pudu3}.

\section{PCF curves of type \texorpdfstring{$(2,1)$}{(2,1)}}
\label{ghost}

We now consider PCF curves of type $(2,1)$.
From \eqref{foggy} we have
\begin{align*}
E=E_{2,1}&= D(y_1)D(y_2)D(x_1)D(0)D(-y_2)D(-y_1)D(0)\\
& =\begin{bmatrix}-y_1y_2^2-y_2+y_1y_2x_1+y_1+x_1 & y_1^2y_2^2+2y_1y_2+1
-y_1^2y_2x_1-y_1x_1
-y_1^2\\
-y_2^2+y_2x_1+1 & y_1y_2^2+y_2-y_1y_2x_1-y_1\end{bmatrix}.\nonumber
\end{align*}
The variety $V_{2,1}$ from \eqref{deadly} is given
by $1=-x_1=1=0$, so it is empty as we know
from Proposition \ref{grinder}\ref{grinder2}.

Let $\B=\{\beta,\bs\}$ be the roots of $Ax^2+Bx+C\in\OO[x]$.  Then
from Definition \ref{pint}
the variety $V(\B)_{2,1}\subseteq \A^{2,1}$ is given by
\begin{align}
A(2y_1y_2^2-2y_1y_2x_1-2y_1+2y_2-x_1)&=B(y_2x_1-y_2^2+1),\label{chorusline}\\
A(-y_1^2y_2^2+y_1^2y_2x_1+y_1^2-2y_1y_2+y_1x_1-1)&=C(y_2x_1-y_2^2+1),\nonumber\\
B(-y_1^2y_2^2+y_1^2y_2x_1+y_1^2-2y_1y_2+y_1x_1-1)&\nonumber\\
=C(2y_1y_2^2-2y_1y_2 &x_1-2y_1+2y_2-x_1). \nonumber 
\end{align}
Generically the curve \eqref{chorusline} has genus $1$ and is given by
\begin{align}
y^2&=(B^2-4AC)(y_2^2+1)^2-4A^2\text{ or}\nonumber\\
z^2&=-4(Ay_1^2+By_1+C)^2+B^2-4AC\text{ where}\nonumber\\
y&=(2Ay_1+B)(y_2^2+1)+2Ay_2=-Ax_1-\frac{B^2-4AC}{A}y_2(y_2^2-y_2x_1-1)\text{ and}\nonumber\\
z&=x_1(Ay_1^2+By_1+C).\text{ Rearranging, }B^2-4AC\text{ is a sum of squares}\nonumber\\
  B^2-4AC&=\left(2Ay_1+B+\frac{2Ay_2}{y_2^2+1}\right)^2+\left(\frac{2A}{y_2^2+1}\right)^2\label{dreamers}\\
         &=(x_1^2+4)(Ay_1^2+By_1+C)^2.\nonumber\\
         &=\frac{A^2(x_1^2+4)}{(y_2^2-y_2x_1-1)^2},\text{ noting that}\nonumber\\
         A&=(Ay_1^2+By_1+C)(y_2^2-y_2x_1-1).\nonumber
  \end{align}
The $j$-invariant of its jacobian is
\[j=\frac{256(B^2-4AC-3A^2)^3}{A^4(B^2-4AC-4A^2)}.\]

If, say, $i\in K$ then the curve \eqref{dreamers} has rational points
\[(y_1,y_2,x_1,y)=\left(-\frac{B}{2A}+\frac{i}{2},i,
\frac{2i(B^2-4AC-A^2)}{B^2-4AC+A^2},2iA\right), (\infty,i,2i,-2iA).\]

\begin{theorem}
\label{rentdue}

If $B^2-4AC\ne0$ or $i\not\in K$, then
the curve $V(\B)_{2,1}(\OO)$ is finite.
Hence, any $\beta\in\overline{\Q}\setminus\OO$ has only finitely many $\OO$-PCFs
of type $(2,1)$.
\end{theorem}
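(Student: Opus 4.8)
The plan is to reduce to Siegel's Theorem~\ref{mousedeer} in every case, handling the generic behaviour and a short list of degenerations separately.

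First I would use the birational model $y^2=(B^2-4AC)(y_2^2+1)^2-4A^2$ of $V(\B)_{2,1}$ recorded in \eqref{dreamers}. The quartic in $y_2$ on the right has four distinct roots precisely when $A\ne0$, $B^2-4AC\ne0$ and $B^2-4AC\ne4A^2$ (its roots satisfy $y_2^2=-1\pm2A(B^2-4AC)^{-1/2}$), and in that range $V(\B)_{2,1}$ has geometric genus $1$; then the positive-genus half of Theorem~\ref{mousedeer} already gives that $V(\B)_{2,1}(\OO)$ is finite. It remains to treat (i) $B^2-4AC=0$, (ii) $A=0$, and (iii) $B^2-4AC=4A^2\ne0$.

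For case (i) the hypothesis forces $i\notin K$. On $V(\B)_{2,1}$ the identities in \eqref{dreamers} give $(x_1^2+4)(Ay_1^2+By_1+C)^2=B^2-4AC=0$ and $A=(Ay_1^2+By_1+C)(y_2^2-y_2x_1-1)$. For an $\OO$-point, $x_1\in\OO\subseteq K$ and $i\notin K$ force $x_1^2+4\ne0$, hence $Ay_1^2+By_1+C=0$, hence $A=0$, and then (as $B^2=B^2-4AC=0$) $B=0$ as well, so $\B=\{\infty,\infty\}$; specializing \eqref{chorusline} to $A=B=0$, $C\ne0$ collapses the system to $y_2x_1=y_2^2-1$ together with one further relation forcing $y_2^2=-1$, which has no $\OO$-solution. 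So $V(\B)_{2,1}(\OO)=\emptyset$ in case (i). In cases (ii) and (iii) we are in the range $B^2-4AC\ne0$, where $V(\B)_{2,1}$ is a rational curve; here the idea is to compute the normalization of $V(\B)_{2,1}$ directly — as an affine curve in $\A^3$ with coordinates $y_1,y_2,x_1$ — and count its geometric points at infinity, i.e.\ points of the smooth projective model at which some coordinate function has a pole. When $A=0$ (so $B\ne0$) one parametrizes the curve by $y_2$, with $x_1=y_2-y_2^{-1}$ and $y_1$ a rational function of $y_2$ having poles at $y_2=\pm i$; together with $y_2=0,\infty$ this yields four points at infinity. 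When $B^2-4AC=4A^2$ the quartic above acquires a double root at $y_2=0$ and the model degenerates, but one checks that on the normalization of $V(\B)_{2,1}$ the functions $y_1$, $x_1$ and $y_2$ still have poles at at least three distinct points (over $y_2=\pm i$, over the zeros of $4y_2^2-1$, and over $y_2=\infty$). In either case the first half of Theorem~\ref{mousedeer} applies and $V(\B)_{2,1}(\OO)$ is finite. The final assertion follows because any $\OO$-PCF of type $(2,1)$ with roots $\B$ determines a point of $V(\B)_{2,1}(\OO)$.

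I expect case (iii) to be the main obstacle: it is tempting but wrong to read the points at infinity off the low-degree plane models in \eqref{dreamers} (the conic appearing there has only two), so one must instead keep track of the poles of the genuine coordinate functions on the normalization of $V(\B)_{2,1}$ itself, and also check irreducibility of these degenerate curves so that no rational component with $\le2$ points at infinity is overlooked — the one genuinely reducible locus, $A=B=0$, already having been disposed of by the direct computation in case (i).
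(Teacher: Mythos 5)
Your proposal is correct and follows the paper's own argument in all essentials: treat the generic locus as a genus-$1$ curve where Siegel applies directly, and handle the degenerate loci $A=0$, $B^2-4AC=0$, and $B^2-4AC-4A^2=0$ as rational curves by counting at least three geometric points at infinity on the normalization. The only substantive differences are that in the case $B^2-4AC=4A^2\ne0$ you assert the pole count that the paper verifies by exhibiting an explicit parametrization with six points at infinity (at $t=\pm1,\,3,\,1\pm2i,\,\infty$), and that your direct elimination when $A=B=0$ (forcing $y_2^2=-1$, impossible since $i\notin K$) replaces the paper's observation that the parametrization $(y_1,\pm i,\pm 2i,0)$ has no $K$-point; both routes are sound, though you should also record, as the paper does, that $\beta\in\overline{\Q}\setminus\OO$ forces $B^2-4AC\ne0$ so that the hypothesis of the first assertion is actually met in the second.
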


\begin{proof}

The curve in Equation \eqref{dreamers} has genus 1 if and only if
$A(B^2-4AC)(B^2-4AC-4A^2)\ne0$, in which case
we are done by Siegel's Theorem~\ref{mousedeer}.
Otherwise, the curve has genus 0, which we now analyze case by case.
If $A=0\ne B$ we can let $y_2$ be the parameter and
\[(y_1,y_2,x_1,y)=\left(-\frac{C}B-\frac{y_2}{y_2^2+1},y_2,
  \frac{y_2^2-1}{y_2},B(y_2^2+1)\right),\]
so again we are done by Siegel since there are four points at infinity
\linebreak $x_2=\pm i,0,\infty$.
If $A=B=0$ we have the parametrization $(y_1,y_2,x_1,y)=(y_1,\pm i,\pm2i,0)$,
but this corresponds to $\beta=\bs=\infty$.
If $B^2-4AC-4A^2=0\ne A$ we have the parametrization
\begin{align*}
(y_1,y_2,x_1,y)=\bigg(-&\frac{B}{2A}-\frac{t^2-6t+1}{t^2-2t+5},\frac{t^2-6t+1}{2(t^2-1)},\\
&\frac{(t^2+2t-7)(t^2-6t+1)}{4(t-3)(t^2-1)},
-\frac{(3t^2-2t+3)(t^2-6t+1)A}{2(t^2-1)^2}\bigg),
\end{align*}
so again we are done by Siegel since 
$t=\pm 1,3,1\pm2i,\infty$ gives six points at infinity.
If $B^2-4AC=0$ then \eqref{dreamers} says $y^2+4A^2=0$, so if
$i\not\in K$ then $A=0$, a case we have already covered.
Finally note that if $\beta\not\in\OO$ then $\bs$ is uniquely determined
and $B^2-4AC\ne0$.
\end{proof}

\begin{remark1}
Suppose $B^2-4AC=0$ and $i\in K$.
Firstly if $B^2-4AC=0$, then $\beta=\bs=-B/(2A)$ and $V(\B)_{2,1}$
has genus $0$.
A parametrization is
\[(y_1,y_2,x_1,y)=\left(-\frac{B}{2A}-\frac1{y_2-i},\,y_2,\,2i,\,-2iA\right),\]
which is integral if \{$\beta=-B/(2A),i,y_2\}\subseteq\OO$, 
and $y_2-i$ is a unit.
Hence we can have infinitely many integral points in this case.
\end{remark1}

The Fermat-Pell conic $\FP_{1}(\B)$ is given by
$Cx^2-Bxy+Ay^2=-A$ from \eqref{Fermat} and the map
$\pi_{\FP}:\A^{2,1}\supseteq V(\B)_{2,1}\rightarrow \FP_{1}(\B)$ is given by
\[
\pi_{\FP}(y_1,y_2, x_1) =
(-y_2^2+y_2x_1+1 , y_1y_2^2+y_2-y_1y_2x_1-y_1).
\]

\begin{proposition}
The PCF curve $V(2)_{2,1}$ has exactly four 
$\Z$-points:
\begin{equation*}
\nonumber  (y_1,y_2,x_1,y)=(1,0,-2,2),(-1,-2,-2,2),(1,2,2,-2),(-1,0,2,-2).
\end{equation*}
Hence, $\sqrt{2}$ has exactly two $\Z$-PCFs
of type $(2,1)$:
\[
 \sqrt{2}=[-1,0,\overline{2}]\quad\mbox{and}\quad \sqrt{2}=[1,2,\overline{2}].
\]
\end{proposition}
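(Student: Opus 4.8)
The plan is to specialize the equations \eqref{chorusline} cutting out $V(\B)_{2,1}$ to the data of $\beta=\sqrt{2}$, namely $A=1$, $B=0$, $C=-2$ (so $B^2-4AC=8$), and then bound the integral points using the genus-one model \eqref{dreamers}. That model carries the polynomial relation $z^2=-4(Ay_1^2+By_1+C)^2+B^2-4AC$ with $z=x_1(Ay_1^2+By_1+C)$, which here reads $z^2=8-4(y_1^2-2)^2$ with $z=x_1(y_1^2-2)$. For a $\Z$-point the right side must be $\ge 0$, so $(y_1^2-2)^2\le 2$; since $y_1^2-2$ is an integer this forces $y_1^2\in\{1,2,3\}$, and $1$ is the only square there, so $y_1=\pm1$. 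Then $y_1^2-2=-1$, hence $z^2=4$ and $x_1=-z\in\{2,-2\}$, leaving the four pairs $(y_1,x_1)=(\pm1,\pm2)$.

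Next I would note that, since $B=0$, the third equation of \eqref{chorusline} is $-2$ times the first and hence redundant, so $V(2)_{2,1}$ is defined by the first two equations. For each of the four pairs $(y_1,x_1)$ above, substituting into those two equations produces two quadratics in $y_2$, and comparing their roots leaves exactly one integer value of $y_2$ in each case. This yields precisely the four points
\[
(y_1,y_2,x_1)=(1,0,-2),\ (-1,0,2),\ (1,2,2),\ (-1,-2,-2)
\]
(the fourth coordinate in the statement being $z=x_1(y_1^2-2)=\pm2$). One can cut the casework in half by observing that, because $B=0$, the involution $(y_1,y_2,x_1)\mapsto -(y_1,y_2,x_1)$ preserves $V(2)_{2,1}$, so it suffices to treat $y_1=1$.

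It remains to translate each $\Z$-point into its PCF $P=[b_1,b_2,\overline{a_1}]$ and run Algorithm \ref{con}. A short computation gives $E(P)=\left[\begin{smallmatrix}1&2\\1&1\end{smallmatrix}\right]$ for $(-1,0,2)$ and $(1,2,2)$ and the inverse matrix $\left[\begin{smallmatrix}-1&2\\1&-1\end{smallmatrix}\right]$ for $(1,0,-2)$ and $(-1,-2,-2)$; in all four cases $\Quad(P)=x^2-2$, $\det E(P)=-1$, $\Tr(E(P))^2=4$, and none of the three divergence conditions of Theorem \ref{bactrian} holds (note $(-1)^k\Tr(E(P))^2=-4\notin[0,4)$, $E(P)\ne\pm i^k I$, and INEQ \ref{grits} fails since $M([a_1])_{21}=1\ne0$), so all four PCFs converge. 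By Lemma \ref{caribou}, $[-1,0,\overline{2}]=[1,\overline{2}]=\sqrt{2}$ and $[1,2,\overline{2}]=[1,\overline{2}]=\sqrt{2}$, whereas $[1,0,\overline{-2}]$ and $[-1,-2,\overline{-2}]$ both equal $-[1,\overline{2}]=-\sqrt{2}$; equivalently, $\beta_+(E(P))$ is $\sqrt{2}$ for the first matrix and $-\sqrt{2}$ for its inverse. Hence exactly two of the four $\Z$-points give PCFs for $\sqrt{2}$ of type $(2,1)$, namely $\sqrt{2}=[-1,0,\overline{2}]$ and $\sqrt{2}=[1,2,\overline{2}]$.

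There is no essential obstacle here: the argument is a bounded finite computation, made finite by the positivity of $z^2$ in \eqref{dreamers}. The only points to watch are keeping both signs of $x_1$ (equivalently of $z$), and remembering that $V(2)_{2,1}$ records PCFs of $-\sqrt{2}$ as well as of $\sqrt{2}$, so Algorithm \ref{con} is genuinely needed to discard the two points whose PCF converges to $-\sqrt{2}$.
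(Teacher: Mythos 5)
Your argument is correct and is essentially the paper's own proof: both bound $y_1$ by the positivity of $z^2=8-4(y_1^2-2)^2$ coming from \eqref{dreamers}, forcing $y_1=\pm1$, and then recover the four points and sort out convergence via Algorithm \ref{con}. You simply supply more of the routine detail (solving for $y_2$, and identifying which two of the four points give PCFs converging to $-\sqrt{2}$ rather than $\sqrt{2}$) that the paper leaves implicit.
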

\begin{proof}

The quartic $-4(y_1^2-2)^2+8=4(-y_1^4+4y_1^2-2)$ from Equation~\eqref{dreamers}
only has only two integral $y_1$ giving positive $z^2$ values, let alone square
integer values, namely $\pm1$,
which yield
\[(y_1,y_2,x_1,y)=(1,0,-2,2),(-1,-2,-2,2),(1,2,2,-2),(-1,0,2,-2).\]
\end{proof}
\begin{proposition}
  \label{grunt}
  \begin{enumerate}
\item  
We have $V(2+\sqrt{2})_{2,1}(\Z[\sqrt{2}])=\emptyset$.
\item  
There are no $\Z[\sqrt{2}]$-PCFs
  of $\sqrt{2+\sqrt{2}}$ of type~$(2,1)$.
\end{enumerate}
\end{proposition}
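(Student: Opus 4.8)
\section*{Proof proposal}

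The plan is to prove part (1), that $V(2+\sqrt{2})_{2,1}(\Z[\sqrt{2}])=\emptyset$, by a valuation argument at the prime $(\sqrt{2})$ of $\Z[\sqrt{2}]$, and then to read off part (2) immediately. For $\beta=\alpha_2=\sqrt{2+\sqrt{2}}$ we have $A=1$, $B=0$, $C=-(2+\sqrt{2})$, so $B^2-4AC=4(2+\sqrt{2})$. The key input is the identity recorded in \eqref{dreamers}: on $V(\B)_{2,1}$ one has $B^2-4AC=(x_1^2+4)(Ay_1^2+By_1+C)^2$. Specializing, any point $(y_1,y_2,x_1)\in V(2+\sqrt{2})_{2,1}(\Z[\sqrt{2}])$ satisfies
\[
4(2+\sqrt{2})=(x_1^2+4)\,(y_1^2-2-\sqrt{2})^2\quad\text{in }\Z[\sqrt{2}].
\]

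First I would pass to valuations. Let $v$ be the valuation of $\Z[\sqrt{2}]$ at the prime $(\sqrt{2})$, normalized by $v(\sqrt{2})=1$, so $v(2)=2$. Since $2+\sqrt{2}=\sqrt{2}\,(1+\sqrt{2})$ with $1+\sqrt{2}$ a unit, $v\bigl(4(2+\sqrt{2})\bigr)=4+1=5$, which is odd; as $v\bigl((y_1^2-2-\sqrt{2})^2\bigr)$ is even, the displayed equation forces $v(x_1^2+4)$ to be odd and $\le 5$. So it suffices to show that $v(x_1^2+4)$ is never odd for $x_1\in\Z[\sqrt{2}]$, which is a short case split on $e:=v(x_1)$ via the ultrametric inequality applied against $v(4)=4$. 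If $e\le 1$ then $v(x_1^2)=2e<4$, so $v(x_1^2+4)=2e$, even. If $e\ge 3$ (including $x_1=0$) then $v(x_1^2)>4$, so $v(x_1^2+4)=4$, even. If $e=2$, write $x_1=2s$ with $s$ a unit at $(\sqrt{2})$, so $x_1^2+4=4(s^2+1)$; writing $s=a+b\sqrt{2}$ with $a$ odd gives $s^2\equiv a^2\equiv 1\pmod 2$, hence $v(s^2+1)\ge 2$ and $v(x_1^2+4)\ge 6$. In every case $v(x_1^2+4)\notin\{1,3,5\}$, a contradiction; this proves (1). The case $e=2$ is the only subtle point, and the main obstacle: there one genuinely must look one level past $s^2$, at $s^2+1$, to see that the valuation leaps to $\ge 6$ rather than stopping at the forbidden value $5$; everything else is immediate.

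Finally, part (2) follows from (1). A periodic $\Z[\sqrt{2}]$-continued fraction $P$ of $\alpha_2=\sqrt{2+\sqrt{2}}$ of type $(2,1)$ would have value $\alpha_2\in\B(P)$, and since $\alpha_2\notin\Q(\sqrt{2})$ has minimal polynomial $x^2-(2+\sqrt{2})$ over $\Q(\sqrt{2})$, the quadratic $\Quad(P)\in\Z[\sqrt{2}][x]$ that $\alpha_2$ satisfies is forced to be a nonzero scalar multiple of $x^2-(2+\sqrt{2})$ (it is nonzero, for otherwise $E(P)$ is scalar and $P$ diverges by Corollary \ref{uber}, contradicting convergence). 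Hence $\B(P)=\{\alpha_2,-\alpha_2\}$ and the associated point lies in $V(2+\sqrt{2})_{2,1}(\Z[\sqrt{2}])$ by the first of the listed properties of PCF varieties in Section \ref{varieties}; but that set is empty by part (1).
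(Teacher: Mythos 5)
Your proof is correct, and it reaches the conclusion by a genuinely different (though still local-at-$2$) route than the paper. The paper reads off from \eqref{dreamers} the quartic model $z^2=-4(y_1^2-2-\sqrt{2})^2+8+4\sqrt{2}$ and rules out solutions by a congruence modulo $4$: the quantity $-y_1^4+(4+2\sqrt{2})y_1^2-(4+3\sqrt{2})$ would have to be a square in $\Z[\sqrt{2}]$, but it is $\sqrt{2}$ or $-1-\sqrt{2}$ mod $4$, while squares are $0,1,2,3+2\sqrt{2}$ mod $4$. You instead use the other factored form in \eqref{dreamers}, $(x_1^2+4)(y_1^2-2-\sqrt{2})^2=4(2+\sqrt{2})$, and compare valuations at the ramified prime $(\sqrt{2})$: the left side contributes an even valuation from the square plus $v(x_1^2+4)$, the right side has odd valuation $5$, and your case split shows $v(x_1^2+4)\in\{0,2,4\}\cup[6,\infty)$, never $1$, $3$, or $5$. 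Both are two-line obstructions at the prime $2$; the paper's is arguably quicker to verify by hand (one finite table of squares mod $4$), while yours isolates more clearly \emph{why} the obstruction exists (the odd valuation of $2+\sqrt{2}$ in the ramified quadratic field) and would transport more readily to other $\alpha_n$. Your deduction of part (2) from part (1) matches the paper's implicit use of the first listed property of PCF varieties. One small caveat: your sentence ``it suffices to show that $v(x_1^2+4)$ is never odd'' overstates what you prove (and what is true) --- for $x_1=2+2\sqrt{2}$ one gets $v(x_1^2+4)=7$, which is odd --- but your case analysis actually establishes the weaker and sufficient statement $v(x_1^2+4)\notin\{1,3,5\}$, and that is exactly what your final contradiction invokes, so the argument stands.
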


\begin{proof}
  By \eqref{dreamers}, the PCF curve $V(2+\sqrt{2})_{2,1}$
  is given by (with $w=\sqrt{2}$ as in Section \ref{sec22})
  \begin{equation}
    \label{pounds}
    z^2=-4(y_1^2-2-w)^2+8+4w=-4y_1^4+8(2+w)y_1^2-4(4+3w)
  \end{equation}
with $w=\sqrt{2}$ as in \eqref{rinsed}.
  For \eqref{pounds} to have a solution in $\Z[w]$, we must have
  that 
\[
-y_1^4+(4+2w)y_1^2-(4+3w)
\]
 is a square in $\Z[w]$.  But this is
  impossible $\bmod\, 4$ : $-y_1^4+(4+2w)y_1^2-(4+3w)$ must be $w$ or
  $-1-w$ $\bmod\, 4$, whereas the squares in $\Z[w]$
  are $0$, $1$, $2$, or $3+2w$ $\bmod\, 4$.
  \end{proof}

\section{PCF curves of type \texorpdfstring{$(1,2)$}{(1,2)}}
\label{sec:integral}

Lastly we consider the PCF curves with type $(1,2)$.
These are substantially more difficult than the types $(0,3)$ and
$(2,1)$ considered in Sections \ref{toady1} and \ref{ghost}.

From \eqref{foggy}
\begin{equation*}
E=E_{1,2}= D(y_1)D(x_1)D(x_2)D(0)D(-y_1)D(0)
=\begin{bmatrix}y_1x_1+1& y_1x_1x_2 +x_2-y_1^2x_1\\
x_1 & x_1x_2-y_1x_1+1\end{bmatrix}.
\end{equation*}
The variety $V_{1,2}$ from \eqref{deadly} is given
by $x_1=x_2= -x_1x_2=0$, so it is a line;
we know from Proposition \ref{grinder}\ref{grinder3} that
$\dim V_{1,2}=1$.
From Definition \ref{pint}
the curve  $V(\B)_{1,2}$ is given by
\begin{align*}
A(x_1x_2-2y_1x_1)&= Bx_1, \\
A(y_1^2x_1-y_1x_1x_2-x_2)&=Cx_1,\\
B(y_1^2x_1-y_1x_1x_2-x_2)&=C(x_1x_2-2y_1x_1). 
\end{align*}

The affine curve $V(\B)_{1,2}$ has the line $V_{1,2}: x_1=x_2=0$
as one component.
This is the only component if $A=0\ne B$.
If $A\ne0$ a second component $V(\B)_{1,2}'$ is the rational
curve given by
\begin{equation}
\label{redo}
(y_1,x_1,x_2)=\left(y_1,-\frac{2Ay_1+B}{Ay_1^2+By_1+C},\frac{2Ay_1+B}A\right).
\end{equation}
If $B^2=4AC,A\ne0$ a third component is $(y_1,x_1,x_2)=(-B/(2A),x_1,0)$.
Finally, if $A=B=0$ there is only one component---the
two-dimensional one with $x_1=0$, which contains the first
component mentioned above.
If $x_2\ne0$ in this case the PCF has the value $\infty$.
For the remainder of the section we shall assume $A\ne0$ and focus on
$V(\B)_{1,2}'$.

\begin{theorem}
  \label{tadpole}
  \begin{enumerate}
\item  If $B^2\ne4AC$, then $V(\B)_{1,2}'(\OO)$
 is finite.
\item  An algebraic number $\beta\in\overline{\Q}\setminus\OO$ has only finitely
many $\OO$-PCFs of type $(1,2)$.
\end{enumerate}
\end{theorem}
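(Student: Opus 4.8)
The plan is to get part~(a) from Siegel's theorem (Theorem~\ref{mousedeer}) using the explicit rational parametrization~\eqref{redo} of the component $V(\B)_{1,2}'$, and then to get part~(b) from part~(a) by the same reduction used in the proofs of Theorems~\ref{great} and~\ref{rentdue}. For part~(a): under the standing hypothesis $A\ne0$, the map $y_1\mapsto(y_1,x_1,x_2)$ of~\eqref{redo} is injective (it is the identity in the first coordinate), so it identifies $\BP^1$ with the normalization of the projective closure of $V(\B)_{1,2}'$. First I would locate the geometric points at infinity, i.e.\ the $y_1\in\BP^1$ whose image has a coordinate equal to $\infty$: these are $y_1=\infty$, where $y_1$ and $x_2=(2Ay_1+B)/A$ blow up, and the two roots $r_1,r_2$ of $Ay_1^2+By_1+C$, where $x_1$ acquires a genuine pole --- genuine because at a simple root $2Ar_j+B\ne0$ (otherwise $r_j=-B/(2A)$ is a double root, contradicting $B^2\ne4AC$). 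Since $A\ne0$ keeps $r_1,r_2$ finite and $B^2\ne4AC$ keeps them distinct, there are exactly three distinct geometric points at infinity on the normalization, so $V(\B)_{1,2}'(\OO)$ is finite by Theorem~\ref{mousedeer}. (This count drops to two precisely when $B^2=4AC$, which is why that case is excluded in~(a).)

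For part~(b): let $P$ be an $\OO$-PCF of type $(1,2)$ with value $\beta\in\overline{\Q}\setminus\OO$, let $(A,B,C)$ be the coefficients of $\Quad(P)$, and put $\B=\B(P)$. First, $A=E_{21}(P)\ne0$: if $A=0$ then $\infty\in\B$, and from the description of the components of $V(\B)_{1,2}$ in that case the only affine points lie on the divergent line $V_{1,2}$ (ruled out by Corollary~\ref{uber}, since $P$ converges) or have value $\infty\ne\beta$. Since $P$ converges it is not on $V_{1,2}$; and when $B^2=4AC$ it does not lie on the third component $(y_1,x_1,x_2)=(-B/(2A),x_1,0)$ either, since a point there would have $b_1=-B/(2A)=\beta\notin\OO$. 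Hence $P$ gives a point of $V(\B)_{1,2}'(\OO)$. Finally, exactly as in the proof of Theorem~\ref{rentdue}, $\beta\notin\OO$ forces $\bs$ to be uniquely determined and $B^2-4AC=A^2(\beta-\bs)^2\ne0$, so $\B$ is fixed data and part~(a) bounds the number of such $P$; this gives~(b).

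I expect part~(a) to be entirely routine once the three points at infinity are pinned down, and the only mild friction to be the case bookkeeping in~(b) over the various special loci of $V(\B)_{1,2}$ (the cases $A=0$, $B^2=4AC$, and the line $V_{1,2}$). There is no deep obstacle here: Theorem~\ref{tadpole} is a soft finiteness statement proved by Siegel, and the genuinely hard arithmetic of type $(1,2)$ --- the explicit computation of $V(2+\sqrt{2})_{1,2}'(\Z[\sqrt{2}])$, where Skolem's method succeeds only after passing to a cover --- is the content of the remainder of Section~\ref{sec:integral} rather than of this theorem.
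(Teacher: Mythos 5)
Your proposal is correct and follows essentially the same route as the paper: part (a) is exactly the paper's one-line argument (the parametrization \eqref{redo} exhibits three geometric points at infinity, namely $y_1=\infty$ and the two roots of $Ay_1^2+By_1+C$, which are distinct and finite when $A(B^2-4AC)\ne0$, so Siegel's Theorem \ref{mousedeer} applies), and your part (b) is the same reduction the paper makes implicitly, using $\beta\notin\OO\Rightarrow B^2-4AC\ne0$ as in the proof of Theorem \ref{rentdue}. The extra bookkeeping you supply (ruling out the divergent line $V_{1,2}$ via Corollary \ref{uber} and the degenerate components when $A=0$ or $B^2=4AC$) is a harmless elaboration of what the paper leaves tacit.
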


\begin{proof}
By Equation \eqref{redo}, there are three points at infinity, $y_1=\infty$ or
a root of $Ay_1^2+By_1+C$, so we are done by Siegel's Theorem \ref{mousedeer}.
\end{proof}

Rearranging Equation \eqref{redo}, we obtain
\begin{equation}
\label{frog1}
  V(\B)_{1,2}'\colon Ay_1^2x_1+By_1x_1+B +2Ay_1=-Cx_1.
\end{equation}
For the special case of $\alpha\in\OO$, $\sqrt{\alpha}\notin \OO$, 
\eqref{frog1} simplifies to
\begin{equation*}
  V(\alpha)_{1,2}'\colon y_1^2x_1+2y_1=\beta^2y_1.
\end{equation*}
In particular
\begin{equation}
  \label{goods1}
  V(2+\alpha_n)_{1,2}' \colon y_1^2x_1+2y_1=(2+\alpha_{n})x_1.
\end{equation}
The $\OO_{n}$-points on $ V(2+\alpha_n)_{1,2}'$
come in natural ``families'' of size $2^{n+1}$, which is clearest to explain by
a general lemma:
\begin{lemma}
  
  \label{sandy}
Let $\OO\subset\C$ be a Dedekind domain.
Let $m\in\OO$, and let
$\sigma$ be an automorphism of $\OO$ such that $\sigma(m)/m=u^2$ for
some unit $u\in\OO^\times$.
If $(y_1,x_1)\in\OO^2$ is a solution to $y_1^2x_1+2y_1=mx_1$,
then so is $(u^{-1}\sigma(y_1),u\sigma(x_1))$.
\end{lemma}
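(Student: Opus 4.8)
The plan is a direct substitution, organized so that the bookkeeping of the unit $u$ is transparent. Since $\sigma$ is a ring automorphism of $\OO$, applying it to the hypothesis $y_1^2x_1+2y_1=mx_1$ immediately gives $\sigma(y_1)^2\sigma(x_1)+2\sigma(y_1)=\sigma(m)\sigma(x_1)$ with $\sigma(y_1),\sigma(x_1)\in\OO$; so the pair $(\sigma(y_1),\sigma(x_1))$ solves the analogous equation with $m$ replaced by $\sigma(m)$. It therefore suffices to understand the effect of the scaling $(Y,X)\mapsto(u^{-1}Y,uX)$ on such an equation.

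Next I would record the following one-line computation: if $(Y,X)\in\OO^2$ satisfies $Y^2X+2Y=\mu X$ for some $\mu\in\OO$, then for any $u\in\OO^\times$ the pair $(u^{-1}Y,uX)$ again lies in $\OO^2$ (both $u$ and $u^{-1}$ lie in $\OO$), and
\[
(u^{-1}Y)^2(uX)+2(u^{-1}Y)=u^{-1}\bigl(Y^2X+2Y\bigr)=u^{-1}\mu X=(u^{-2}\mu)(uX),
\]
so $(u^{-1}Y,uX)$ solves $Y'^2X'+2Y'=(u^{-2}\mu)X'$. Applying this with $(Y,X)=(\sigma(y_1),\sigma(x_1))$ and $\mu=\sigma(m)$, and invoking the hypothesis in the form $u^{-2}\sigma(m)=m$ (equivalently $\sigma(m)/m=u^2$), shows that $(u^{-1}\sigma(y_1),u\sigma(x_1))$ solves $Y'^2X'+2Y'=mX'$, which is exactly the assertion.

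There is no genuine obstacle: the whole content is in rigging the scaling so that the two factors $u^{-1}$ from $(u^{-1}Y)^2$ and the single factor $u$ from $uX$ collapse to the same overall $u^{-1}$ that already multiplies the linear term $2(u^{-1}Y)$, leaving a leftover $u^{-2}$ on the right to be absorbed by $\sigma(m)/m=u^2$. The Dedekind hypothesis on $\OO$ plays no role beyond "$\OO$ is a commutative ring"; it is stated only because the lemma will be used with $\OO=\OO_n$, $m=2+\alpha_n$, and $\sigma$ an element of $\Gal(F_n/\Q)$ for which $\sigma(2+\alpha_n)/(2+\alpha_n)$ is a square unit, thereby generating the $2^{n+1}$-element families of $\OO_n$-points on the curve $V(2+\alpha_n)_{1,2}'$ of \eqref{goods1}.
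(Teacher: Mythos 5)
Your proof is correct, and it is exactly the routine verification the paper intends: the paper states Lemma \ref{sandy} without proof precisely because the direct substitution you carry out (apply $\sigma$ to the equation, then rescale by $(Y,X)\mapsto(u^{-1}Y,uX)$ and absorb the leftover $u^{-2}$ via $\sigma(m)=u^2m$) is the whole content. Your side remark that the Dedekind hypothesis is not used is also accurate; it is only there to match the setting $\OO=\OO_n$ in which the lemma is applied.
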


We need the following lemma so that we can take $m=\alpha_{n+1}^2$.
(Recall $F_{n+1}=\Q(\alpha_{n+1})$ from the introduction.)

\begin{lemma}
\label{shinto}
Let $\sigma$ generate $\Gal(F_{n+1}/\Q)$.
Then $\sigma^j(\alpha_{n+1})/\alpha_{n+1}$ is a unit in $F_{n}$.
\end{lemma}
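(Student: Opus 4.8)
The plan is to show that $\sigma^j(\alpha_{n+1})/\alpha_{n+1}$ is an algebraic integer with algebraic-integer inverse, and moreover lies in $F_n$. First I would recall the trigonometric description: $\alpha_{n+1}=2\cos(2\pi/2^{n+3})=\zeta+\zeta^{-1}$ where $\zeta=\zeta_{2^{n+3}}$, and $\sigma$ acts on $\zeta$ by $\zeta\mapsto\zeta^g$ for some odd $g$ (a generator of the relevant quotient of $(\Z/2^{n+3}\Z)^\times$). Then $\sigma^j(\alpha_{n+1})=\zeta^{g^j}+\zeta^{-g^j}$, so
\[
\frac{\sigma^j(\alpha_{n+1})}{\alpha_{n+1}}=\frac{\zeta^{g^j}+\zeta^{-g^j}}{\zeta+\zeta^{-1}}=\frac{\zeta^{2g^j}+1}{\zeta^{g^j-1}(\zeta^{2}+1)}=\zeta^{1-g^j}\cdot\frac{\zeta^{2g^j}-1}{\zeta^{2}-1},
\]
using that $2g^j$ is even. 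The factor $\zeta^{1-g^j}$ is a root of unity, hence a unit. The remaining quotient $(\zeta^{2g^j}-1)/(\zeta^{2}-1)$ is a ratio of two elements lying above the same prime $2$ in $\Q(\zeta_{2^{n+2}})=F_{n+1}(\ )$... more precisely it is a ratio of cyclotomic integers that is a unit because $g^j$ is odd: both $\zeta^{2}$ and $\zeta^{2g^j}$ are primitive $2^{n+2}$-th roots of unity, and it is classical that $(\eta^a-1)/(\eta-1)$ is a unit in $\Z[\eta]$ whenever $\eta$ is a primitive $p$-power root of unity and $a$ is coprime to $p$ (here $p=2$, $\eta=\zeta^2$, $a=g^j$ odd).

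The second point is membership in $F_n$. Here I would argue Galois-theoretically: $\sigma^j(\alpha_{n+1})/\alpha_{n+1}$ is fixed by the unique index-$2$ subgroup of $\Gal(F_{n+1}/\Q)$, namely $\langle\sigma\text{ on }F_{n+1}\text{ restricted}\rangle$... concretely, the nontrivial element $\tau$ of $\Gal(F_{n+1}/F_n)$ sends $\alpha_{n+1}\mapsto-\alpha_{n+1}$ (since $\alpha_{n+1}^2=2+\alpha_n\in F_n$ and $\alpha_{n+1}\notin F_n$), and $\tau$ commutes with every power of $\sigma$ because $\Gal(F_{n+1}/\Q)$ is abelian (it is $\cong\Z/2^{n+1}\Z$). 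Therefore $\tau$ sends $\sigma^j(\alpha_{n+1})\mapsto-\sigma^j(\alpha_{n+1})$ as well, so the quotient $\sigma^j(\alpha_{n+1})/\alpha_{n+1}$ is $\tau$-fixed, hence lies in $F_n$. Combining the two points: the quotient is a unit in $\OO_{n+1}^\times\cap F_n=\OO_n^\times$, which is exactly the claim.

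The main obstacle I anticipate is pinning down cleanly that the cyclotomic quotient is genuinely a \emph{unit} (not merely an algebraic integer), i.e.\ that its inverse $(\zeta^2-1)/(\zeta^{2g^j}-1)$ is also integral. This is where one invokes the standard fact about $p$-power cyclotomic fields that all the "cyclotomic numbers" $(\zeta_{p^m}^a-1)/(\zeta_{p^m}-1)$ with $p\nmid a$ are units — equivalently, that $(\zeta_{p^m}-1)$ generates the unique prime above $p$ and this prime is totally ramified, so the valuations match. Alternatively, and perhaps more self-containedly for this paper, one can avoid cyclotomic unit theory entirely: write $\sigma^j(\alpha_{n+1})$ as a polynomial in $\alpha_{n+1}$ with integer coefficients (using the minimal polynomial / Chebyshev-type relations, e.g.\ $\alpha_{n+1}$ satisfies the $2^{n+1}$-th "cosine" polynomial), observe directly that $\alpha_{n+1}\mid\sigma^j(\alpha_{n+1})$ in $\OO_{n+1}$ because the constant term vanishes while the relevant power-of-two index structure forces divisibility both ways; then the ratio and its inverse are both in $\OO_{n+1}$, and the $F_n$-membership argument above finishes it. I would present whichever of these two routes is shorter given the conventions already set up, likely the explicit Galois/divisibility route since the paper is otherwise elementary about $F_n$.
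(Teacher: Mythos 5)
Your argument is correct in substance and reaches the right conclusion, but the first half takes a genuinely different route from the paper. For the key point that the ratio is a unit, the paper stays inside $F_{n+1}$: since $\N_{F_{n+1}/\Q}(\alpha_{n+1})=\pm2$ and $2$ is totally ramified in $F_{n+1}$, the ideal $(\alpha_{n+1})$ \emph{is} the unique prime above $2$, hence is carried to itself by $\sigma^j$, so $\sigma^j(\alpha_{n+1})/\alpha_{n+1}$ is a unit --- no cyclotomic field or cyclotomic-unit theory needed. You instead pass up to $\Q(\zeta_{2^{n+3}})$ and invoke the classical fact that $(\eta^a-1)/(\eta-1)$ is a unit for $\eta$ a primitive $p$-power root of unity and $p\nmid a$; this works, but note a small slip in your displayed chain: $(\zeta^{2g^j}+1)/(\zeta^2+1)$ is \emph{not} equal to $(\zeta^{2g^j}-1)/(\zeta^2-1)$ in general (try $\zeta=\zeta_8$, $g^j=3$: you get $-i$ versus $i$). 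To apply the cyclotomic-unit fact cleanly you should set $\eta=-\zeta^2$, which is again a primitive $2^{n+2}$-th root of unity, and write $\zeta^{2g^j}+1=-(\eta^{g^j}-1)$ using that $g^j$ is odd; alternatively just observe that all four quantities $\zeta^{\pm 2g^j}\mp1$, $\zeta^2\pm1$ generate the unique prime above $2$, so any ratio of them is a unit. Since both sides of your erroneous identity are units, the conclusion survives, but the identity as written is false. The second half of your argument (the ratio lies in $F_n$ because $\Gal(F_{n+1}/\Q)$ is abelian, so the involution $\tau$ fixing $F_n$ negates both $\alpha_{n+1}$ and $\sigma^j(\alpha_{n+1})$) is exactly the paper's argument. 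What the paper's route buys is brevity and self-containment; what yours buys is an explicit closed form for the unit as a root of unity times a cyclotomic unit, which could be useful if one wanted the units $u_j$ of Proposition \ref{runts} explicitly.
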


\begin{proof}

Since $\N_{F_{n+1}/\Q}(\alpha_{n+1})=2$ and $2$ ramifies completely in $F_{n+1}$,
$\sigma^j(\alpha_{n+1})$ equals $\alpha_{n+1}$ up to a unit in $F_{n+1}$.
Also $F_{n}$ is the fixed field of the involution of $F_{n+1}$ that takes
$\alpha_{n+1}$ to $-\alpha_{n+1}$, which 
clearly fixes $\sigma^j(\alpha_{n+1})/\alpha_{n+1}$.
\end{proof}
 Applying Lemmas \ref{sandy} and \ref{shinto} we have the following
\begin{proposition}
  \label{runts}
Let $\sigma$ generate $\Gal(F_{n+1}/\Q)$ and let 
$u_j=\sigma^j(\alpha_{n+1})/\alpha_{n+1}$
  for $0\le j< 2^{n+1}$; the $u_j$ are distinct units in $F_{n}$
with $u_{2^{n}}=-1$. If $(y_1,x_1)$ is
an $\OO_{n}$-point on $ V(2+\alpha_{n})_{1,2}'$ in
\textup{(\ref{goods1})}, then so
  is $(u_j^{-1}\sigma^j(y_1), u_j\sigma^j(x_1))$.
\end{proposition}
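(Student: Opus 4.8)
The plan is to reduce the whole statement to Lemma~\ref{sandy} applied with $m=\alpha_{n+1}^{2}=2+\alpha_{n}$, together with the Galois theory of the tower $F_{n}\subset F_{n+1}$, most of which is already packaged in Lemma~\ref{shinto}.

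First I would establish the three claims about the $u_{j}$. That each $u_{j}=\sigma^{j}(\alpha_{n+1})/\alpha_{n+1}$ is a unit of $\OO_{n}^{\times}$ is exactly Lemma~\ref{shinto}: the quotient is a unit in $\OO_{n+1}^{\times}$ because $\N_{F_{n+1}/\Q}(\alpha_{n+1})=\pm 2$ and $2$ is totally ramified in $F_{n+1}$, and it is fixed by the nontrivial element of $\Gal(F_{n+1}/F_{n})$ since $\Gal(F_{n+1}/\Q)$ is abelian, so it lies in $F_{n}$. For distinctness, $u_{i}=u_{j}$ forces $\sigma^{i-j}$ to fix $\alpha_{n+1}$, hence to fix $F_{n+1}=\Q(\alpha_{n+1})$, so $i\equiv j\pmod{2^{n+1}}$. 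For $u_{2^{n}}=-1$: the unique order-$2$ subgroup of the cyclic group $\Gal(F_{n+1}/\Q)\cong\Z/2^{n+1}\Z$ is $\Gal(F_{n+1}/F_{n})$, generated by $\sigma^{2^{n}}$; its nontrivial element permutes the roots $\pm\alpha_{n+1}$ of $x^{2}-(2+\alpha_{n})\in F_{n}[x]$ and cannot fix $\alpha_{n+1}$, so $\sigma^{2^{n}}(\alpha_{n+1})=-\alpha_{n+1}$ and $u_{2^{n}}=-1$.

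Next I would invoke Lemma~\ref{sandy}. Since $F_{n}/\Q$ is Galois, the restriction of $\sigma^{j}$ to $F_{n}$ is an automorphism of $\OO_{n}=\Z[\alpha_{n}]$, which I still denote $\sigma^{j}$. Put $m=2+\alpha_{n}=\alpha_{n+1}^{2}\in\OO_{n}$ and $u=u_{j}\in\OO_{n}^{\times}$. The hypothesis $\sigma^{j}(m)/m=u_{j}^{2}$ of Lemma~\ref{sandy} holds because $\sigma^{j}(m)=\sigma^{j}(\alpha_{n+1})^{2}=(u_{j}\alpha_{n+1})^{2}=u_{j}^{2}\alpha_{n+1}^{2}=u_{j}^{2}m$, using that the automorphism $\sigma^{j}$ of $F_{n+1}$ restricts on $F_{n}$ to the chosen $\sigma^{j}$ and is a ring map. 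Since Equation~\eqref{goods1} is precisely $y_{1}^{2}x_{1}+2y_{1}=mx_{1}$, Lemma~\ref{sandy} gives that any $\OO_{n}$-point $(y_{1},x_{1})$ of $V(2+\alpha_{n})_{1,2}'$ is carried to the $\OO_{n}$-point $(u_{j}^{-1}\sigma^{j}(y_{1}),\,u_{j}\sigma^{j}(x_{1}))$, which is the assertion. (As a sanity check, $j=2^{n}$ gives $\sigma^{2^{n}}|_{F_{n}}=\mathrm{id}$ and $u_{2^{n}}=-1$, so the map is $(y_{1},x_{1})\mapsto-(y_{1},x_{1})$, under which $y_{1}^{2}x_{1}+2y_{1}=mx_{1}$ is visibly invariant.)

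The argument is essentially bookkeeping, and I expect the only genuinely delicate point to be the interplay between the field $F_{n+1}$, where $\alpha_{n+1}$ and the $u_{j}$ naturally live, and the base field $F_{n}$, over which the curve $V(2+\alpha_{n})_{1,2}'$ is defined: one must check that each $u_{j}$ really descends to a unit of $\OO_{n}$ and that the twisting identity $\sigma^{j}(\alpha_{n+1}^{2})/\alpha_{n+1}^{2}=u_{j}^{2}$ holds as an equality in $\OO_{n}$. Once these are in place, the existence of the $2^{n+1}$-member family and the special value $u_{2^{n}}=-1$ are immediate.
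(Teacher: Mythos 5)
Your proof is correct and follows the same route as the paper, which obtains the proposition by exactly this application of Lemma~\ref{sandy} with $m=2+\alpha_n=\alpha_{n+1}^2$ together with Lemma~\ref{shinto}. Your added verifications (distinctness of the $u_j$, the value $u_{2^n}=-1$, and the descent of $u_j$ and of the identity $\sigma^j(m)/m=u_j^2$ to $\OO_n$) are all sound and merely make explicit what the paper leaves to the reader.
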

 A consequence of Proposition \ref{runts} is the following:
\begin{proposition}
  \label{stunts}

The number $N_n$ of $\OO_{n}$-points on $ V(2+\alpha_{n})_{1,2}'$
  satisfies $N_n\equiv\,1\pmod{2^{n+1}}$.
The number of $\OO_{n}$-PCFs
  of $\alpha_{n+1}$ of type $(1,2)$ is divisible by $2^{n}$.
\end{proposition}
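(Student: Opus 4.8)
The plan is to upgrade the group action of Proposition~\ref{runts} to a \emph{free} action on (most of) the finite set $S_n:=V(2+\alpha_n)_{1,2}'(\OO_n)$ and read off both statements from it. Note $S_n$ is finite by Theorem~\ref{tadpole}, since here $A=1,B=0,C=-(2+\alpha_n)$, so $B^2-4AC=4(2+\alpha_n)\ne0$. For $j\in\Z/2^{n+1}\Z$ write $\tau_j$ for the map $(y_1,x_1)\mapsto(u_j^{-1}\sigma^j(y_1),u_j\sigma^j(x_1))$ of Proposition~\ref{runts}. First I would check $\tau_i\circ\tau_j=\tau_{i+j}$, which is the identity $u_i\,\sigma^i(u_j)=u_{i+j}$, immediate from $u_j=\sigma^j(\alpha_{n+1})/\alpha_{n+1}$ and $\sigma$ having order $2^{n+1}$; so $\Z/2^{n+1}\Z$ acts on $S_n$. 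Its unique element of order $2$ is $2^n$; since $u_{2^n}=-1$ and $\sigma^{2^n}$ is the involution of $F_{n+1}/F_n$ (fixing $F_n$ pointwise), $\tau_{2^n}$ is simply negation $(y_1,x_1)\mapsto(-y_1,-x_1)$, whose only fixed $\OO_n$-point is $(0,0,0)$ — which does lie on $V(2+\alpha_n)_{1,2}'$, being the value $y_1=0$ of the parametrization~\eqref{redo}. Because every nontrivial subgroup of $\Z/2^{n+1}\Z$ contains $\langle 2^n\rangle$, a point with nontrivial stabilizer is fixed by $\tau_{2^n}$, hence equals $(0,0,0)$; thus $\Z/2^{n+1}\Z$ acts freely on $S_n\setminus\{(0,0,0)\}$, every orbit there has size $2^{n+1}$, and $N_n\equiv1\pmod{2^{n+1}}$. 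This is the first assertion.

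For the second, I would show that every point of $S_n$ other than $(0,0,0)$ gives a \emph{convergent} PCF. INEQ~\ref{grits} is vacuous when $k=2$: the $(2,2)$-entry of $D(a_{j+1})D(a_{j+2})$ is $K_0=1$, so $|M_{22}|=1\not>1$. And for $(b_1,a_1,a_2)\in S_n$ with $b_1\ne0$, \eqref{redo} gives $a_1=-2b_1/(b_1^2-m)\ne0$ (with $m:=2+\alpha_n$) and $a_2=2b_1$; hence $E_{21}=a_1\ne0$ (so $E\ne\pm I$), while $E_{22}=E_{11}=b_1a_1+1$ and $\Tr(E)=2(b_1a_1+1)=-2(b_1^2+m)/(b_1^2-m)$, which is real with $\Tr(E)^2>4$ because $(b_1^2+m)^2-(b_1^2-m)^2=4mb_1^2>0$ (using $m>0$, $b_1\ne0$, and $b_1^2\ne m$ since $\pm\alpha_{n+1}\notin\OO_n$). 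So none of the three divergence criteria of Theorem~\ref{bactrian} holds, and the PCF converges — necessarily to $\alpha_{n+1}$ or $-\alpha_{n+1}$, the roots of $x^2-m$. Conversely, any $\OO_n$-PCF $P$ of $\alpha_{n+1}$ of type $(1,2)$ arises this way: $\Quad(P)$ has $\alpha_{n+1}$ as a root with coefficients in $F_n$, and $[F_{n+1}:F_n]=2$ forces $\Quad(P)$ to be a scalar multiple of $x^2-m$, so $P$ gives a point of $V(2+\alpha_n)_{1,2}(\OO_n)$ on the component $V'$ (since $E_{21}\ne0$ for a finite convergent value). Hence the PCFs of $\alpha_{n+1}$ of type $(1,2)$ are in bijection with $A_n^+:=\{p\in S_n:\text{the PCF of }p\text{ converges to }\alpha_{n+1}\}$, and likewise $A_n^-$ with limit $-\alpha_{n+1}$, and $S_n=\{(0,0,0)\}\sqcup A_n^+\sqcup A_n^-$.

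Finally, negating all partial quotients negates a convergent PCF (conjugating $\phi_c$ by $z\mapsto-z$ gives $\phi_{-c}$), so $\tau_{2^n}$, being negation, restricts to a bijection from $A_n^+$ to $A_n^-$; thus $|A_n^+|=|A_n^-|$ and $N_n=1+2|A_n^+|$. Comparing with $N_n\equiv1\pmod{2^{n+1}}$ gives $2|A_n^+|\equiv0\pmod{2^{n+1}}$, i.e.\ $2^n\mid|A_n^+|$, which is the second assertion. I expect the only delicate point to be the convergence dichotomy $S_n\setminus\{(0,0,0)\}=A_n^+\sqcup A_n^-$ with no divergent points: it rests on INEQ~\ref{grits} being empty for $k=2$, on the trace bound $\Tr(E)^2>4$ holding identically along $V'$, and on $V'$ meeting the degenerate line $V_{1,2}$ only at $(0,0,0)$. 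Everything else is elementary combinatorics on a cyclic $2$-group.
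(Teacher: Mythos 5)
Your proposal is correct and follows essentially the same route as the paper: a free action of the cyclic group of order $2^{n+1}$ from Proposition~\ref{runts} on the points other than $(0,0,0)$ (freeness reduced to the order-two element $\tau_{2^n}$, i.e.\ negation), convergence of every nonextraneous point via $\Tr(E)^2-4=16(2+\alpha_n)y_1^2/(y_1^2-2-\alpha_n)^2>0$, and negation pairing the PCFs of $\alpha_{n+1}$ with those of $-\alpha_{n+1}$. You spell out a few steps the paper leaves implicit (the vacuity of INEQ~\ref{grits} for $k=2$ and the bijection between points and PCFs), but the argument is the same.
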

\begin{proof}

  The $\OO_{n}$-points on $ V(2+\alpha_{n})_{1,2}'$ consist of
  the extraneous point $(y_1,x_1)=(0,0)$ together with families
  $(u_j^{-1}\sigma^j(y_1), u_j\sigma^j(x_1))$ with $y_1\ne 0\ne x_1$,
  which correspond to $\OO_{n}$-PCFs for
  $\alpha_{n+1}$ or $-\alpha_{n+1}$ of type $(1,2)$.

To see that the $2^{n+1}$ members of a family are distinct, suppose
$x_1=u_j\sigma^j(x_1)$ for some $j\ne0$.
Then $\sigma^j(\alpha_{n+1}x_1)=\alpha_{n+1}x_1$ and because some power of 
$\sigma^j$
equals $\sigma^{2^{n}}$, $-\alpha_{n+1}x_1
=\sigma^{2^{n}}(\alpha_{n+1}x_1)=\alpha_{n+1}x_1$.
So $x_1=0$, which implies $y_1=0$, a contradiction.

Note that Equation \eqref{goods1} implies
$x_1x_2(x_1x_2+4)=4y_1x_1(y_1x_1+2)=4x_1^2\alpha_{n+1}^2>0$, which implies
$x_1x_2<-4$ or $x_1x_2>0$, so by Algorithm \ref{con}
the PCFs from the families all converge.
  \end{proof}

We next reduce finding integral points on $ V(2+\alpha_{n})_{1,2}'$
to finding integral points on another curve.  We give a general lemma.

\begin{lemma}
  
  \label{radios}
Let $\OO\subset\C$ be a Dedekind domain,
let $p$ be a rational prime that has a unique
$\OO$-prime $\mathfrak p$ above it, and assume $\mathfrak p=(\pi)$ is principal.
If $y_1,x_1\in\OO$ satisfy $y_1^2x_1+py_1=\pi x_1$,
then $p\mid x_1$ and $(x_1/p)\mid y_1$,
so that $a=x_1/p, b=py_1/x_1\in\OO$ satisfy $a^2 b^2+b=\pi$.
Conversely, if $a,b\in\OO$ satisfy $a^2 b^2+b=\pi$,
then $x_1=pa, y_1=ab\in\OO$ satisfy $y_1^2x_1+py_1=\pi x_1$.
\end{lemma}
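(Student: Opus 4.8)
The plan is to dispose of the converse by a one-line substitution and the forward direction by a valuation count at the primes of $\OO$, the only real content being a parity mismatch at $\mathfrak p$ itself. For the converse, given $a,b\in\OO$ with $a^2b^2+b=\pi$ I would set $x_1=pa$ and $y_1=ab$; both lie in $\OO$, and $y_1^2x_1+py_1=pa(a^2b^2+b)=pa\pi=\pi x_1$, so $(y_1,x_1)$ solves the equation, while $x_1/p=a$ and (when $a\ne 0$) $py_1/x_1=b$ show this inverts the first construction. So that direction is immediate.

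For the forward direction, suppose $(y_1,x_1)\in\OO^2$ satisfies $y_1^2x_1+py_1=\pi x_1$; I may assume $x_1\ne 0$, since $x_1=0$ forces $y_1=0$. I would work with the equivalent forms $x_1(\pi-y_1^2)=py_1$, $\;y_1(y_1x_1+p)=\pi x_1$, and, dividing by $x_1$, $\;y_1^2+py_1/x_1=\pi$. The assertions ``$p\mid x_1$ and $(x_1/p)\mid y_1$'' amount to $v_{\mathfrak q}(x_1)\ge v_{\mathfrak q}(p)$ and $v_{\mathfrak q}(y_1)\ge v_{\mathfrak q}(x_1)-v_{\mathfrak q}(p)$ at every prime $\mathfrak q$ of $\OO$; once these hold, $a:=x_1/p$ and $b:=py_1/x_1$ lie in $\OO$ and $a^2b^2+b=y_1^2+py_1/x_1=\pi$ by the third form, completing the proof. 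At a prime $\mathfrak q\ne\mathfrak p$ this is routine: $v_{\mathfrak q}(p)=0$ because $\mathfrak p$ is the only prime over $p$, so the first inequality is automatic, and $x_1(\pi-y_1^2)=py_1$ gives $v_{\mathfrak q}(y_1)=v_{\mathfrak q}(x_1)+v_{\mathfrak q}(\pi-y_1^2)\ge v_{\mathfrak q}(x_1)$, which is the second.

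The crux — and the step I expect to be the main obstacle — is the prime $\mathfrak p=(\pi)$. There $v_{\mathfrak p}(\pi)=1$ is odd while $v_{\mathfrak p}(y_1^2)=2v_{\mathfrak p}(y_1)$ is even, so no cancellation occurs in $\pi-y_1^2$ and $v_{\mathfrak p}(\pi-y_1^2)=\min(1,\,2v_{\mathfrak p}(y_1))$, which is $0$ if $v_{\mathfrak p}(y_1)=0$ and $1$ if $v_{\mathfrak p}(y_1)\ge 1$. Writing $e:=v_{\mathfrak p}(p)$ (so $p\OO=\mathfrak p^{e}$ by uniqueness of $\mathfrak p$), the identity $x_1(\pi-y_1^2)=py_1$ yields $v_{\mathfrak p}(x_1)=e+v_{\mathfrak p}(y_1)-v_{\mathfrak p}(\pi-y_1^2)$, which equals $e$ in the first case and $e+v_{\mathfrak p}(y_1)-1\ge e$ in the second; in both cases $v_{\mathfrak p}(x_1)\ge e=v_{\mathfrak p}(p)$ and $v_{\mathfrak p}(y_1)\ge v_{\mathfrak p}(x_1)-e$, which is exactly what was needed. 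The point one has to see is that the divisibilities can only fail at $\mathfrak p$ and that the odd-versus-even valuation comparison is precisely what forbids the feared cancellation in $\pi-y_1^2$; everything else is bookkeeping.
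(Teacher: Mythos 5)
Your proof is correct, including the reduction to $x_1\ne 0$ and the check that the two constructions are mutually inverse. It does, however, take a different route from the paper's. The paper argues entirely with element divisibility: writing $p=\pi^e u$ with $u$ a unit, it first shows $\pi\mid x_1$ (using that $(\pi)$ is prime, so $\pi\mid y_1^2x_1$ forces $\pi\mid x_1$ or $\pi\mid y_1$, and the latter case also yields $\pi\mid x_1$), then divides the equation by $\pi$ and iterates $e$ times to conclude $\pi^e\mid x_1$, hence $p\mid x_1$; the divisibility $(x_1/p)\mid y_1$ then drops out of the single identity $y_1=a(\pi-y_1^2)$ with $a=x_1/p$, with no further localization. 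Your argument instead verifies the two divisibilities as valuation inequalities at every prime $\mathfrak q$ simultaneously, the whole content being concentrated at $\mathfrak q=\mathfrak p$, where the observation $v_{\mathfrak p}(\pi-y_1^2)=\min\bigl(1,\,2v_{\mathfrak p}(y_1)\bigr)$ (odd versus even, so no cancellation) replaces the paper's induction on $e$. What your version buys is a one-shot computation that makes explicit exactly where divisibility could fail and why it does not; what the paper's version buys is a completely valuation-free, more elementary descent, plus a slicker derivation of $a\mid y_1$ directly from a global identity rather than from a prime-by-prime bound. One small point worth keeping in mind in either treatment: at the extraneous solution $x_1=y_1=0$ the formula $b=py_1/x_1$ is indeterminate (the corresponding point is $(a,b)=(0,\pi)$), so excluding $x_1=0$ at the outset, as you do, is the right move.
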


\begin{proof}

We can write $p=\pi^eu$ for some unit $u$ and $e>0$.
We have $\pi\mid y_1^2x_1=\pi x_1-\pi^e uy_1$ so $\pi\mid x_1$ or $\pi\mid y_1$.
If $\pi\mid y_1$ we get $\pi(y_1/\pi)^2x_1+\pi^eu(y_1/\pi)=x_1$, which implies
$\pi\mid x_1$, so $\pi\mid x_1$ in either case.
We have $y_1^2(x_1/\pi)+\pi^{e-1}uy_1=\pi(x_1/\pi)$, which is identical to the
original equation with $x_1/\pi$ replacing $x_1$ and $e-1$ replacing $e$,
so we can repeat this process $e$ times to show that $p=\pi^eu\mid x_1$.
Now we have $y_1=\pi a-y_1^2a=a(\pi-y_1^2)$ with $a=x_1/p$, so $a\mid y_1$.
Letting $y_1=ab$ and dividing by $a$ we obtain $b=\pi-(ab)^2$.
\end{proof}

\begin{remark1}
\label{pinto}
Applying Lemma \ref{radios} with $\OO=\OO_{n}=\Z[\alpha_{n}]$, $p=2$,
and $\pi=2+\alpha_{n}$, we see that $\OO_{n}$-points on
$ V(2+\alpha_{n})_{1,2}'$ correspond one-to-one with $\OO_{n}$-points
on the curve
\begin{equation}
  \label{oranges}
  E(2+\alpha_{n}) \colon (a^2 b+1)b=2+\alpha_{n} .
\end{equation}
  \end{remark1}

The curve $E(2+\alpha_{n})$ is in turn closely related
to the curve
\[
F(2+\alpha_{n})=
\BP^1\setminus \{ \infty, \alpha_{n+1}, -\alpha_{n+1}\}
\]
given over $\OO_{n}$ by the equation
\begin{equation}
  \label{turret}
  u[t^2-(2+\alpha_{n})] = 1 .
\end{equation}
\begin{proposition}

  The curves $E(2+\alpha_{n})$ and $F(2+\alpha_{n})$
  are isomorphic over $\OO_{n}[1/2]$.
\end{proposition}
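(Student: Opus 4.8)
The plan is to write down the isomorphism explicitly. Geometrically $E(2+\alpha_n)$ is a double cover of an affine line and $F(2+\alpha_n)=\BP^1\setminus\{\infty,\alpha_{n+1},-\alpha_{n+1}\}$ is an open subset of an affine line, so the natural guess is that the map $(a,b)\mapsto t:=ab$ identifies them, with the coordinate $u$ on $F$ becoming $-1/b$. This substitution fails to be an isomorphism over $\OO_n$ only because $b$ need not be a unit there, and the content of the statement is precisely that $b$ becomes a unit once $2$ is inverted. I would therefore organize the argument in three steps: (i) show $2+\alpha_n$ is a unit in $\OO_n[1/2]$; (ii) rewrite the two coordinate rings over $\OO_n[1/2]$ using this; (iii) exhibit mutually inverse ring homomorphisms.

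\textbf{Step 1 (the only step with arithmetic content).} I would show $2+\alpha_n\in\OO_n[1/2]^\times$. Using $\alpha_{n+1}^2=2+\alpha_n$ together with the facts recalled in the proof of Lemma~\ref{shinto} — $\N_{F_{n+1}/\Q}(\alpha_{n+1})=2$ and $2$ is totally ramified in $F_{n+1}$, so $(\alpha_{n+1})$ is the unique prime of $\OO_{n+1}$ over $2$ — and $\N_{F_{n+1}/F_n}(\alpha_{n+1})=\alpha_{n+1}\cdot(-\alpha_{n+1})=-(2+\alpha_n)$, one deduces that $(2+\alpha_n)\OO_n$ is the unique prime $\mathfrak p$ of $\OO_n$ over $2$ (equivalently, $\N_{F_n/\Q}(2+\alpha_n)=\pm2$). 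Since $\mathfrak p\mid(2)$, it follows that $2+\alpha_n$ is a unit in $\OO_n[1/2]$. I expect this to be the main (indeed only nonformal) obstacle; everything after it is a routine manipulation.

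\textbf{Step 2 (rewrite the coordinate rings).} Over $\OO_n[1/2]$ the curve $E(2+\alpha_n)$ has coordinate ring $R_E=\OO_n[1/2][a,b]/\big((a^2b+1)b-(2+\alpha_n)\big)$, in which $b$ is \emph{automatically} a unit, with $b^{-1}=(a^2b+1)/(2+\alpha_n)$ by Step~1; and $F(2+\alpha_n)$ has coordinate ring $R_F=\OO_n[1/2][t,u]/\big(u(t^2-(2+\alpha_n))-1\big)$, in which $u$ is a unit with $u^{-1}=t^2-(2+\alpha_n)$.

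\textbf{Step 3 (the two maps).} I would define $\varphi\colon R_F\to R_E$ by $t\mapsto ab$, $u\mapsto -b^{-1}$, and $\psi\colon R_E\to R_F$ by $a\mapsto -tu$, $b\mapsto (2+\alpha_n)-t^2\,(=-u^{-1})$. Substituting into the two defining relations shows each is well defined: under $\varphi$ the relation for $F$ becomes $(-b^{-1})\big(a^2b^2-(2+\alpha_n)\big)-1=(-b^{-1})(-b)-1=0$, and under $\psi$ the relation for $E$ becomes $\big((-tu)^2(2+\alpha_n-t^2)+1\big)(2+\alpha_n-t^2)-(2+\alpha_n)=(1-t^2u)(-u^{-1})-(2+\alpha_n)=0$. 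Finally one checks on generators that $\psi\varphi=\operatorname{id}_{R_F}$ and $\varphi\psi=\operatorname{id}_{R_E}$: e.g.\ $\psi\varphi(t)=(-tu)(-u^{-1})=t$, $\psi\varphi(u)=\psi(-b^{-1})=u$, while $\varphi\psi(a)=-(ab)(-b^{-1})=a$ and $\varphi\psi(b)=(2+\alpha_n)-a^2b^2=b$ using the relation in $R_E$. This gives $R_E\cong R_F$ over $\OO_n[1/2]$, hence the isomorphism of curves; concretely $(a,b)\mapsto ab$ identifies $E(2+\alpha_n)$ over $\OO_n[1/2]$ with $\BP^1\setminus\{\infty,\alpha_{n+1},-\alpha_{n+1}\}$, the removed points $t=\pm\alpha_{n+1}$ being exactly the zeros of $b=\alpha_{n+1}^2-t^2$. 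It is worth noting that inverting $2$ cannot be dropped: $(a,b)=(0,2+\alpha_n)$ is an $\OO_n$-point of $E(2+\alpha_n)$ whose image has $u=-(2+\alpha_n)^{-1}\notin\OO_n$.
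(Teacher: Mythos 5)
Your proof is correct and follows essentially the same route as the paper, which simply exhibits the same explicit isomorphism $t=ab$, $u=-1/b$ with inverse $b=-1/u$, $a=-ut$. Your Step 1, showing that $2+\alpha_n$ (hence $b$) becomes a unit after inverting $2$, makes explicit the point the paper leaves implicit about why these maps are actually defined over $\OO_n[1/2]$ and why the localization is needed.
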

\begin{proof}

  An explicit isomorphism from $E(2+\alpha_{n})$ in (\ref{oranges})
  to $F(2+\alpha_{n})$ in (\ref{turret}) is given by
  $t=ab$, $u=-1/b$ with inverse $b=-1/u$, $a=-ut$.
\end{proof}
 One could also deduce the finiteness of the $\OO_{n}$-points
on $E(2+\alpha_{n})$ via the finiteness
of the $\OO_{n}[1/2]$-points on $\BP^1\setminus\{0,
\alpha_{n+1}, -\alpha_{n+1}\}$, which is a common application of Siegel's theorem.
In the following we shall find all solutions to $(a^2 b+1)b=2+\alpha_{n}$,
then recover the PCF as $\alpha_{n+1}=[ab,\overline{2a,2ab}]$.

\subsection{\texorpdfstring{\protect{\boldmath{$\Z$}}}{Z}-points on the Curve
  \texorpdfstring{\protect{\boldmath{$E(2)$}}}{E(2)}}

\begin{proposition}

The $\Z$-points on the curve
\begin{equation*}
E(2)\colon 2=(a^2b+1)b
\end{equation*}
are $(a,b)=(1,1)$, $(-1,1)$, $(-1,-2)$, $(1,-2)$, $(0,2)$.
\end{proposition}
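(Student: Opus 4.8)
The plan is to use the fact that over $\Z$ the equation $(a^2b+1)b = 2$ immediately constrains $b$: since $a^2b+1\in\Z$, the factorization $2 = (a^2b+1)\cdot b$ exhibits $b$ as a divisor of $2$, so $b\in\{1,-1,2,-2\}$. This reduces the problem to four one-variable equations in $a$, each of which is trivially solvable. (This is exactly the mechanism already used in Lemma \ref{radios} and in the $(0,3)$ analysis of Section \ref{sec22}, specialized to the base case $n=0$ where $2+\alpha_0=2$.)

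First I would record the divisibility $b\mid 2$ and enumerate the four possibilities for $b$. Then I would substitute each in turn. For $b=1$ the equation becomes $a^2+1=2$, so $a=\pm1$, yielding the points $(1,1)$ and $(-1,1)$. For $b=-1$ it becomes $(-a^2+1)(-1)=2$, i.e.\ $a^2=3$, which has no solution in $\Z$, so this case is empty. For $b=2$ it becomes $(2a^2+1)\cdot 2 = 2$, forcing $2a^2+1=1$, hence $a=0$, giving $(0,2)$. For $b=-2$ it becomes $(-2a^2+1)(-2)=2$, forcing $-2a^2+1=-1$, i.e.\ $a^2=1$, giving the points $(1,-2)$ and $(-1,-2)$.

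Collecting the cases yields exactly the five points $(1,1),(-1,1),(-1,-2),(1,-2),(0,2)$, which completes the proof. There is no real obstacle here; the only points requiring a moment's attention are verifying that $b=-1$ contributes nothing and that no divisor of $2$ has been overlooked, after which the finitely many substitutions are routine.
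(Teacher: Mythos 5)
Your proposal is correct and is exactly the paper's argument: the paper's proof reads ``Firstly, $b\mid 2$ implies $b=\pm1,\pm2$, and then we can solve for $a$ in each case,'' and you have simply carried out those four substitutions explicitly.
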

\begin{proof}

Firstly, $b\mid 2$ implies $b=\pm1,\pm2$, and then we can solve for $a$ in each case.
\end{proof}
As a consequence we deduce the following continued fractions.
\begin{corollary}

There are precisely
two $\Z$-PCFs for $\sqrt{2}$ of type $(1,2)$:
\begin{equation*}
\sqrt{2}=[1,\overline{2,2}]\qquad\mbox{and}\qquad\sqrt{2}=[2,\overline{-2,4}].
\end{equation*}
\end{corollary}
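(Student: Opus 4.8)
The plan is to exploit the dictionary, just established in this section, between $\OO_n$-points on $V(2+\alpha_n)_{1,2}'$ and $\OO_n$-points on $E(2+\alpha_n)$, specialized to $n=0$. Here $\OO_0=\Z$, $\alpha_0=0$, and $\pi=2$, so Lemma~\ref{radios} (equivalently Remark~\ref{pinto}) furnishes a bijection between the $\Z$-points $(y_1,x_1)$ of $V(2)_{1,2}'$ and the $\Z$-points $(a,b)$ of $E(2)\colon(a^2b+1)b=2$, via $x_1=2a$, $y_1=ab$, with $x_2=2y_1=2ab$ coming from \eqref{redo}. The preceding proposition already lists the five $\Z$-points of $E(2)$, namely $(a,b)=(1,1),\,(-1,1),\,(-1,-2),\,(1,-2),\,(0,2)$, so by the recovery formula $\sqrt2=[ab,\overline{2a,2ab}]$ the only candidate type-$(1,2)$ $\Z$-PCFs coming from $V(2)_{1,2}'$ are
\[
[1,\overline{2,2}],\quad [-1,\overline{-2,-2}],\quad [2,\overline{-2,4}],\quad [-2,\overline{2,-4}],\quad [0,\overline{0,0}].
\]

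Next I would rule out the non-contributing cases. Since $A=1\ne0$ and $B^2=0\ne-8=4AC$, the curve $V(2)_{1,2}$ has exactly the two components $V_{1,2}\colon x_1=x_2=0$ and $V(2)_{1,2}'$; by Corollary~\ref{uber} the line $V_{1,2}(\C)$ lies in $V(\C)^{\mathrm{div}}$, so it yields no convergent PCF, and it is precisely the point $(a,b)=(0,2)$ (giving $(y_1,x_1,x_2)=(0,0,0)$, where $E_{1,2}=I$ and Algorithm~\ref{con} returns ``doesn't exist''). For the four remaining candidates, observe that the involution $(a,b)\mapsto(-a,b)$ preserves $E(2)$ (as $a$ enters only through $a^2$) and negates each of $ab$, $2a$, $2ab$; hence it sends a PCF to the one with every partial quotient negated, whose value is the negative. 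Thus the four candidates form two mirror pairs, $\{[1,\overline{2,2}],\,[-1,\overline{-2,-2}]\}$ and $\{[2,\overline{-2,4}],\,[-2,\overline{2,-4}]\}$, and it suffices to decide, within each pair, which member has value $+\sqrt2$.

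For that I would invoke what is already on record: $[1,\overline{2,2}]=[1,\overline{2}]$ after collapsing the period (or directly via Proposition~\ref{ratify} with $m=2$, or by running Algorithm~\ref{con}), which is the classical expansion $\sqrt2=[1,\overline{2}]$; and $[2,\overline{-2,4}]=\sqrt2$ is exactly the Example computed in Section~\ref{convergence}'s vicinity, where $E=\bigl[\begin{smallmatrix}-3&-4\\-2&-3\end{smallmatrix}\bigr]$ has $|E_{21}\beta+E_{22}|=|{-2\sqrt2-3}|>1$. Their mirrors $[-1,\overline{-2,-2}]$ and $[-2,\overline{2,-4}]$ therefore converge to $-\sqrt2$ and are PCFs of $-\sqrt2$, not of $\sqrt2$. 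Counting up, precisely two of the five $\Z$-points of $V(2)_{1,2}$ give convergent PCFs with value $\sqrt2$, namely $[1,\overline{2,2}]$ and $[2,\overline{-2,4}]$.

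There is no genuinely hard step here: the argument is entirely a corollary of the $\Z$-point count on $E(2)$ together with the convergence machinery. The only point requiring care—the ``main obstacle'' such as it is—is the bookkeeping around the extraneous solution $(0,2)$ and the determination of which root of $x^2-2$ each surviving PCF actually converges to, which is handled cleanly by the negation symmetry plus Algorithm~\ref{con}.
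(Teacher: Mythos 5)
Your proposal is correct and follows exactly the route the paper intends: the bijection of Lemma~\ref{radios}/Remark~\ref{pinto} between $\Z$-points of $V(2)_{1,2}'$ and of $E(2)$, the recovery formula $[ab,\overline{2a,2ab}]$ applied to the five listed points, discarding the extraneous $(0,2)$, and sorting the remaining mirror pairs by convergence to $\pm\sqrt2$. The paper leaves these steps implicit (``As a consequence we deduce\dots''), and your write-up simply makes the same bookkeeping explicit.
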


\subsection{\texorpdfstring{\protect{\boldmath{$\Z[\sqrt{2}]$}}}{Z[sqrt(2)]}-points on the curve
\texorpdfstring{\protect{\boldmath{$E(2+\sqrt{2})$}}}{E(2+sqrt(2))}}

\begin{theorem}
\label{tips}

Let $w=\sqrt{2}$ and $u=\sqrt{2}+1$.
Then the $\Z[\sqrt{2}]$-points on the curve
\begin{equation}
\label{round}
E(2+\sqrt{2})\colon 2+\sqrt{2}=uw=(a^2b+1)b
\end{equation}
are $(a,b)=(0,wu)$ and the ten pairs
$(\pm a, b)$ with $a,\, b$ as follows:
  \begin{align*}
  b&=-u,\, a=-1, \\
  b&=u,\,a=w-1=1/u, \\
  b&=1/u^3,\, a=13+9w=(3+w)u^2, \\
  b&=-u^5,\, a=31-22w=-(3-w)/u^3, \\ \noalign{\medskip}
  b&=w,\,a=1, \\
  b&=-wu^2,\, a=1-w=-1/u, \\ \noalign{\medskip}
  b&=-w/u, a=-2-w=-wu, \\
  b&=-wu^3, a=4-3w=-w/u^2\\
  b&=-w/u^{13}, a =-13wu^{10}, \\
  b&=-wu^{15}, a=-13w/u^{11}.
  \end{align*}
\end{theorem}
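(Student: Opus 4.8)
The plan is to transfer the problem down to $\Z[\sqrt2]$, observe that $b$ must divide $uw=2+\sqrt2$ and so lies in one of a few one-parameter cosets of the unit group, and then kill each coset by a $2$-adic (Skolem-type) argument, one branch of which runs into Ljunggren's equation; finiteness is already guaranteed by Theorem~\ref{tadpole}, so the real content is the exact list.

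First I would dispose of the degenerate solution: if $a=0$ in \eqref{round} then $b=uw$, giving the point $(0,wu)$, so from now on $a\neq0$. From $(a^2b+1)b=uw$ we get $b\mid uw$ in $\Z[\sqrt2]$. Since $2$ is totally ramified, $(2)=(\sqrt2)^2$ with $w=\sqrt2$ the unique prime above $2$, while $\Z[\sqrt2]^{\times}=\{\pm u^n:n\in\Z\}$; hence the only divisors of $uw$ up to units are $1$ and $w$, so $b=\pm u^n$ or $b=\pm wu^n$ for some $n\in\Z$. Setting $t:=ab$ turns \eqref{round} into $t^2=uw-b$, subject to $a=t/b\in\Z[\sqrt2]$ (automatic when $b$ is a unit, and equivalent to $w\mid t$ when $b=\pm wu^n$). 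Everything therefore reduces to the question: for which $n$ is $uw-b$ a square in $\Z[\sqrt2]$?

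Next I would normalize the exponent. The squares among the units of $\Z[\sqrt2]$ are exactly the $u^{2k}$ (neither $-1$ nor $u$ is totally positive), so I can split each of the cases $b=\pm u^n$, $b=\pm wu^n$ according to the parity of $n$, pull out a square unit, and reduce ``$uw-b=\square$'' to a short list of equations $c^2=\Lambda_n$ with $\Lambda_n$ a specific binary-recurrence (Pell / Pell--Lucas type) expression in $n$, possibly times $w$ or $u$. Taking norm and trace down to $\Q$ converts each such equation into a Diophantine equation over $\Z$. In most branches this $\Z$-equation is settled by a congruence modulo a small power of $2$ --- precisely the situation where Skolem's $p$-adic method for $p=2$ applies directly, exactly as in the proof of Theorem~\ref{trails} --- forcing $n$ into a finite short list. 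In the one genuinely hard branch the naive equation has too large a unit rank for Skolem's method to apply, so I would pass to a cover: adjoin $i$ and work in $\Q(\zeta_8)=\Q(\sqrt2,i)$, where the relevant unit rank drops to $1$ and Skolem becomes available (equivalently, restrict $n$ to a fixed residue class); this is the passage underlying the isomorphism $E(2+\sqrt2)\cong F(2+\sqrt2)$ over $\OO_n[1/2]$ and the auxiliary curve $\BP^1\setminus\{0,\alpha_{n+1},-\alpha_{n+1}\}$. After the cover the $2$-adic analysis goes through and the surviving equation collapses to Ljunggren's $x^2+1=2y^4$, whose only solutions are $(x,y)=(\pm1,\pm1)$ and $(\pm239,\pm13)$ \cite{ljunggren}. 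Collecting the admissible $n$ from all branches, back-substituting, and discarding those with $a=t/b\notin\Z[\sqrt2]$ produces exactly the ten pairs $(\pm a,b)$; a direct check confirms these and $(0,wu)$ all satisfy \eqref{round}.

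The main obstacle will be the hard branch together with the bookkeeping: correctly setting up the cover so that Skolem's method becomes applicable there (verifying the required rank/independence hypothesis on the relevant $2$-adic logarithms), faithfully translating the two-variable square conditions over $\Z[\sqrt2]$ into honest one-variable Thue/Ljunggren equations over $\Z$, and checking that no solution is lost in the reduction or in the passage to the cover. Throughout, the count $21=5\cdot4+1$ --- twenty non-extraneous $E$-points (the ten pairs $(\pm a,b)$), organized via Proposition~\ref{runts} into five families of size $2^{2}=4$, plus the extraneous point --- serves as a running consistency check, compatible with Proposition~\ref{stunts} and with the ten PCFs of Corollary~\ref{pot}.
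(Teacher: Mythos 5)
Your opening moves match the paper's: dispose of $a=0$, deduce $b\mid uw$ so that $b=\pm u^n$ or $\pm wu^n$, and reduce to deciding when $uw-b$ is a square in $\Z[\sqrt2]$. But the engine that actually kills each coset is where your proposal goes astray. The paper substitutes $x=-wb/u$, $y=x\|ab\|$ and lands on $y^2=x(x+2)(x+\|b\|)$, so the four cases $\|b\|=1,-1,-2,2$ become concrete curves over $F_1$. The cases $\|b\|=1$ and $\|b\|=-2$ are elliptic curves with \emph{finite} Mordell--Weil group over $F_1$ (Theorem~\ref{saiga}, proved by Fermat/Bessy-style descent from $E(\Q)$ and $E^\sigma(\Q)$), which finishes them outright; the case $\|b\|=2$ is a \emph{rational} curve whose integral points are put in bijection with the sixteen points of Theorem~\ref{trails} --- and that, not the hard branch, is where Ljunggren's equation enters. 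Your assertion that ``most branches are settled by a congruence modulo a small power of $2$'' is true only of $\|b\|=1$, and your claim that the hard branch collapses to Ljunggren after a cover misplaces where Ljunggren is used.

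The genuinely hard branch is $\|b\|=-1$, where the elliptic curve $y^2=x(x+2)(x-1)$ has rank $1$ over $F_1$, so no descent or congruence finishes it. Here the paper writes $wu={a'}^2-uz^2=\N_{L_1/K}(a'+vz)$ with $v=\sqrt{u}$, i.e., it works in the quartic field $L_1=\Q\bigl(\sqrt{1+\sqrt2}\,\bigr)$ (LMFDB 4.2.1024.1), whose relative norm-one unit group has rank $1$; the solutions form a single coset of $\pm u_1^{\Z}$, and Proposition~\ref{oryx} interpolates $\N_{K/\Q}(z(j))$ $2$-adically to show each class $j\bmod 2$ contains exactly one solution with $\N_{K/\Q}(z)=\pm1$. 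Your proposed cover $\Q(\zeta_8)=\Q(\sqrt2,i)$ cannot play this role: splitting ${a'}^2\pm uz^2$ as a norm form requires $\sqrt{\pm u}$, and $\Q(\sqrt{1+\sqrt2})$ is not Galois over $\Q$ (its conjugate $\sqrt{1-\sqrt2}$ is imaginary), hence is not contained in the abelian field $\Q(\zeta_8)$. Likewise, the isomorphism $E(2+\alpha_n)\cong F(2+\alpha_n)$ over $\OO_n[1/2]$ that you invoke is only a change of variables used to connect with Siegel's theorem; it is not the cover on which Skolem's method is run. So while your overall strategy (coset decomposition plus a $2$-adic argument) is in the right spirit, the proposal is missing the elliptic-curve descent that handles two of the four cases, misidentifies the field in which Skolem's method is applied in the rank-one case, and mislocates the appearance of Ljunggren's equation.
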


 Most of the rest of this section is devoted to proving
Theorem~\ref{tips}.

By Equation \eqref{round}, $b|w$ in $F_1$, so $b=\pm u^k,\pm wu^k$.
We divide the proof up into four cases: $\|b\|=1,-1,-2,2$, which correspond to
$b=\pm u^{2k}$, $\pm u^{2k+1}$, $\pm wu^{2k}$, $\pm wu^{2k+1}$.

We change variables to simplify.
Let $x=-wb/u\in \Z[w]$, so $\|x\|=2\|b\|$.
The identity $(ab)^2=uw+uwx/2=uw(x+2)/2$ implies
\begin{align}
\|ab\|^2&=\|uw\|\|x+2\|/4=(x+2)\left(\frac{\|b\|}{x}+1\right)\nonumber\\
\text{and so }y^2&=x(x+2)(x+\|b\|)\text{ where }y=x\|ab\|,\label{kouprey}
\end{align}
which is elliptic if $\|b\|\ne2$.
Hence, it is necessary and sufficient to find all the
$\Z[w]$-points $(x,y)$ on the curve \eqref{kouprey}
with $w(x+2)/(ux^2)=a^2$ a square in $\Z[w]$ and $\|x\|=2\|b\|$.

We pause in this proof to prove a theorem that follows from the ``method of
descent'' of Fermat and from an application of this method by Bessy.

\begin{theorem}
\label{saiga}

The elliptic curve $E\colon y^2=x^3-x$ has Mordell--Weil group
$E(F_1)\cong\Z/4\Z\times\Z/2\Z$.
The seven affine $F_1$-rational points are
\[(x,y)=(0,0),(1,0),(-1,0),(1-w,\pm(2-w)),(1+w,\pm(2+w)).\]
\end{theorem}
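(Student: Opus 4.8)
The goal is to determine the Mordell--Weil group of $E\colon y^2 = x^3 - x$ over $F_1 = \Q(\sqrt2)$ and list its rational points. First I would observe that $E$ has full $2$-torsion over $\Q$ already, namely $(0,0)$, $(1,0)$, $(-1,0)$, so $E(F_1)_{\mathrm{tors}} \supseteq (\Z/2\Z)^2$, and hence $E(F_1)_{\mathrm{tors}}$ contains $\Z/4\Z\times\Z/2\Z$ as soon as we exhibit a point of order $4$; the points $(1\pm w,\,\pm(2\pm w))$ (one checks $(1+w)^3-(1+w) = 7+5w = (2+w)^2$ and similarly for the conjugate) are such points, since doubling $(1+w,2+w)$ lands on a $2$-torsion point. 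So the easy half is: the listed seven affine points, together with $\infty$, form a subgroup isomorphic to $\Z/4\Z\times\Z/2\Z$.

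**The hard half — bounding the rank and torsion from above.** I would then show $E(F_1) = \Z/4\Z\times\Z/2\Z$ exactly, i.e. that the rank is $0$ and there is no further torsion. The rank statement is the crux. The paper flags that this "follows from the method of descent of Fermat and from an application of this method by Bessy" — so the intended route is a classical complete $2$-descent. Concretely: using the three $2$-isogenies (or the full $2$-descent via the map $E(F_1)/2E(F_1) \hookrightarrow F_1^\times/(F_1^\times)^2 \times F_1^\times/(F_1^\times)^2$ sending $(x,y)\mapsto (x-1,\ x+1)$ away from the $2$-torsion, with the usual adjustments at the $2$-torsion points), I would bound the image by a local analysis at the primes of bad reduction. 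The field $F_1$ has class number $1$ and unit group $\{\pm1\}\times u^{\Z}$ with $u = 1+\sqrt2$; the relevant primes are $\mathfrak p_2 = (\sqrt2)$ (the ramified prime above $2$) and no others, since $E$ has good reduction away from $2$. So the Selmer group sits inside the subgroup of $F_1^\times/(F_1^\times)^2$ generated by $-1$, $u$, and $\sqrt2$, a group of order $8$ in each coordinate, and I would cut this down by congruence conditions modulo powers of $\mathfrak p_2$ (equivalently, solvability of the relevant torsor equations $N_1 z_1^2 - N_2 z_2^2 = \ldots$ locally at $\mathfrak p_2$ and at the real places). The outcome should be that the $2$-Selmer group has $\F_2$-dimension exactly $3$, matching the contribution of the full rational $2$-torsion, so the rank is $0$. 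For the torsion: reduction modulo a prime of good reduction of small norm — e.g. a degree-$1$ prime above $7$ or above $17$, where $E(\F_q)$ has order not divisible by $8$ — pins $E(F_1)_{\mathrm{tors}}$ down to exactly $\Z/4\Z\times\Z/2\Z$.

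**Listing the points.** Once the rank is $0$ and the torsion is $\Z/4\Z\times\Z/2\Z$, the affine $F_1$-points are precisely the seven non-identity elements of this group, which I would enumerate directly: the three $2$-torsion points $(0,0),(1,0),(-1,0)$; a chosen order-$4$ point $(1-w,\,2-w)$ and its inverse $(1-w,\,-(2-w))$; and the coset $(1+w,\,2+w)$, $(1+w,\,-(2+w))$ obtained by adding the $2$-torsion point $(0,0)$ (one verifies $(1-w)+(0,0)$ on $E$ gives $x$-coordinate $\frac{0\cdot(1-w)+1}{1-w - 0}\cdot(\ldots)$ — more cleanly, the $x$-coordinates of the two order-$4$ cosets are the roots of $x^2 - 2x - 1 = 0$, namely $1\pm w$). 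This gives exactly the seven points claimed.

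**Main obstacle.** The genuine work is the upper bound on the rank: carrying out the $2$-descent over $F_1$ carefully enough to see that the Selmer group is no larger than the $2$-torsion forces, which requires the local solvability computations at $\mathfrak p_2$. The torsion upper bound via reduction and the downward enumeration of points are routine by comparison; I expect the descent to be the only step needing real care, and the reference to Fermat/Bessy suggests the authors present it in its classical "infinite descent" guise rather than as a formal Selmer-group computation.
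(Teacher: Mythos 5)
Your overall strategy is workable and the statement is true, but you take a genuinely different route from the paper. The paper performs no descent over $F_1$ at all: writing $\sigma$ for the nontrivial element of $\Gal(F_1/\Q)$, it decomposes $2P = (P+P^\sigma) + (P-P^\sigma)$, notes that $P+P^\sigma \in E(\Q)$ while $P-P^\sigma$ is identified with a rational point of the quadratic twist $E^\sigma\colon y^2=x^3-4x$ over $\Q$, and then quotes the classical facts (Fermat, Fr\'enicle de Bessy) that $E(\Q)=E[2]$ and $E^\sigma(\Q)=E^\sigma[2]$. Hence $2E(F_1)\subseteq E[2]$, so $E(F_1)\subseteq E[4]$, and the seven points are read off from the roots in $F_1$ of the $4$-division polynomial $(x^3-x)(x^2+1)(x^4-6x^2+1)$. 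What this buys is that all the hard arithmetic input is over $\Q$ and is already classical; the quadratic field only enters through elementary Galois theory and a factorization of a quartic. Your direct complete $2$-descent over $F_1$ would also succeed --- $F_1$ has class number one, unit group generated by $-1$ and $1+\sqrt2$, and bad reduction only at $(\sqrt2)$, so the local analysis is finite --- and your torsion bound by reduction at a degree-one prime of good reduction (e.g.\ above $7$, where $E(\F_7)\cong\Z/4\Z\times\Z/2\Z$) is a fine substitute for the paper's enumeration of $E[4]$. The cost is that you must actually carry out the local solvability computations at the ramified prime above $2$, which is the step the paper's argument sidesteps entirely.

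Two corrections, one of which matters. The harmless one: $(1+w)^3-(1+w)=6+4w=(2+w)^2$, not $7+5w$ (that is $(1+w)^3$ itself). The one that matters: for the full $2$-descent, the injection $E(F_1)/2E(F_1)\hookrightarrow \mathrm{Sel}^{(2)}$ together with $\dim_{\F_2}E(F_1)/2E(F_1)=\operatorname{rank}E(F_1)+\dim_{\F_2}E(F_1)[2]=\operatorname{rank}E(F_1)+2$ means you must show the $2$-Selmer group has $\F_2$-dimension exactly $2$, not $3$, in order to conclude the rank is $0$; dimension $3$ would only give rank at most $1$. The three nontrivial $2$-torsion points do not contribute three independent Selmer classes: their images in $\bigl(F_1^\times/(F_1^\times)^2\bigr)^2$ multiply to the trivial class, and one of them already lies in $2E(F_1)$ on account of the order-$4$ point. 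With that dimension count corrected, your plan goes through.
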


\begin{proof}
 Let $\sigma$ generate $\Gal(F_1/\Q)$, and denote the
quadratic twist of\linebreak $E$ by 2 by $E^\sigma\colon y^2=x^3-4x$,
which is $F_1$-isomorphic to $E$ but not $\Q$-isomorphic.  Fermat
showed in effect that
$E(\Q)=\langle(0,0),(1,0)\rangle=E[2]\cong(\Z/2\Z)^2$, and
Bessy\footnote{ Dickson \cite[Ch.~XXII, page 617]{dickson2} 
     attributes this to \cite{frenicle} Bernard 
  Fr\'{e}nicle de Bessy (c.~1604--1674 [Dickson's ``\dag 1765'' must
    be a transposition typo for 1675]), page 175 of a posthumous
  ``Trait\'{e} des Triangles Rectangles en Nombres, Paris, 1676,
  101--6; M\'{e}m.\ Acad.\ Sc.\ Paris, 5, 1666--1699; \'{e}d.\ Paris
  5, 1729, 174; Recu[e]il de plusieurs traitez [sic] de
  math\'{e}matiques de l'$\!$Acad.\ Roy.\ Sc.\ Paris,
  1676''\kern-.07em.  Bessy corresponded regularly with Fermat
  (1607--1665).  } showed in effect that
$E^\sigma(\Q)=\langle(0,0),(2,0)\rangle=E^\sigma[2]\cong(\Z/2\Z)^2$.

Suppose $P\in E(F_1)$.
Then
\begin{equation*}
2P = (P + P^\sigma) + (P - P^\sigma).
\end{equation*}
Now $(P + P^\sigma)^\sigma = P + P^\sigma$, so $P + P^\sigma \in E(F_1)^\sigma=E(\Q)$.
Likewise $(P - P^\sigma)^\sigma = -(P - P^\sigma)$, which identifies
$P - P^\sigma$ with a point on $E^\sigma(\Q)$.
So we can recover $E(F_1)$ as the preimage in $E(F_1)$ of
$E(\Q) + E^\sigma(\Q)$ under multiplication by~$2$.
Note that the sum $E(\Q) + E^\sigma(\Q)$ need not be direct, but
$E(\Q) \cap E^\sigma(\Q)$ is contained in the \hbox{$2$-torsion} subgroup
$E[2] = E^\sigma[2]$.
Since $E(\Q)=E[2]$ and $E^\sigma(\Q)=E^\sigma[2]$, we have $E(F_1)\subset E[4]$.
The 4-division polynomial of $E$ is $(x^3-x)(x^2+1)(x^4-6x^2+1)$, which has
seven roots in $F_1$.  Five of these give $F_1$-rational $y$'s,
which give the seven points in the theorem.
Alternatively, we could have used Magma's \texttt{Generators} \cite{magma} command.
\end{proof}

 \ul{Case $\|b\| = 1$.}  This easiest case is just a matter
of considering\linebreak Equation~\eqref{round} mod $4$. Indeed
$\|b\| = 1$ implies $b = m + n w$ with $(m,n) \equiv (1,0) \pmod 2$.  But
then $2+w - b = (2-m) + (1-n)w$ has norm $(2-m)^2 - 2(1-n)^2 \equiv 3
\pmod 4$, which is not a square in~$\Z$, so $2+w - b=(ab)^2$ cannot be
a square in $\Z[w]$.

Alternatively, Equation \eqref{kouprey} with $\|b\|=1$ is the elliptic curve
$E\colon y^2=(x+1)^3-(x+1)$, which is $\Q$-isomorphic to the $E$ in
Theorem \ref{saiga}.
So $E$ has only affine points
\[(x,y)=(0,0),(-1,0),(-2,0),(-w,\pm(2-w)),(w,\pm(2+w)),\]
which have $\|x\|=0,1,4,-2$, none of which equals $2\|b\|=2$.

\ul{Case $\|b\| = -2$.}
Equation \eqref{kouprey} with $\|b\|=-2$ is the elliptic curve
$E\colon y^2=x^3-4x$, which is $F_1$-isomorphic to the $E$ in
Theorem \ref{saiga}.
So $E$ has only affine points
\[(x,y)=(0,0),(2,0),(-2,0),(2-2w,\pm(4w-4)),(2+2w,\pm(4w+4)),\]
the last four of which have $\|x\|=-4=2\|b\|$ and $w(x+2)/(ux^2)$ a square.
These correspond to the points $(a,b)=(\pm(1-w),-4-3w=-wu^2),(\pm1,w)$,
which correspond to the 5th and 6th pairs in the theorem.

\ul{Case $\|b\| = 2$.}
In this case the curve \eqref{kouprey} is rational, and the norm of $x$ needs
to be $4$.
One solution is $(x,y)=(-2,0)$ which gives the extraneous solution
$(a,b)=(0,uw)$.
Otherwise, to get integral points we let $t=y/(x+2)$ be integral, so that
$(x,y)=(t^2,t(t^2+2))$ with $\|t\|=\pm2$.
Hence, we can take $t=\pm wu^j$, $x=2u^{2j}$, and $b=-wu^{2j+1}$.

\begin{lemma}
Let $(z_1,z_2,z_3)$ be a $\Z[w]$-solution to \eqref{gift} with
$(A,B,C)=(1,0,-uw)$.
Then $(\pm a,b)=\left((z_2z_3+1)/ z_2^2 ,-uwz_2^2\right)$ are
$\Z[w]$-solutions to \eqref{round} with $\|b\|=2\|z_2\|^2=2(-1)^2=2$
by the proof of Theorem \textup{\ref{trails}}.
Conversely, if $(a,b)$ is a $\Z[w]$-solution to \eqref{round} with
$\|b\|=2$ then $(z_1,z_2,z_3)=\left(z_2(1\pm ab)(a^2b+1),z_2,z_2(a^2b\pm a+1)\right)$,
where $z_2^2=-b/(uw)$, is a $\Z[w]$-solution to \eqref{gift}.
\end{lemma}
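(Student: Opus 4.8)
The plan is to establish both directions of the correspondence by direct substitution in $\Z[w]$; the only non-formal ingredients are the integrality assertions, which I will isolate.

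\textbf{Forward direction.} Start from a $\Z[w]$-solution $(z_1,z_2,z_3)$ of \eqref{gift} with $(A,B,C)=(1,0,-uw)$. Eliminating $z_1$ as in \eqref{riot}, this is equivalent to the single relation $uw(z_2z_3+1)^2=z_2^2+1$ together with $z_1z_2+1=uw(z_2z_3+1)$ (the second equation of \eqref{gift}), the latter determining $z_1$ since $z_2\ne0$. Writing $uw(z_2z_3+1)^2=z_2^2+1$ in the form \eqref{rattle} gives $z_2\bigl(uwz_2z_3^2+2uwz_3-z_2\bigr)=-u$, so $z_2\mid u$ in $\Z[w]$; hence $z_2$ is a unit and $\|z_2\|=\pm1$. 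It follows that $a:=(z_2z_3+1)/z_2^2$ and $b:=-uwz_2^2$ lie in $\Z[w]$, with $\|b\|=\|uw\|\,\|z_2\|^2=2$. Finally, using $uw(z_2z_3+1)^2=z_2^2+1$ one finds $a^2b=-uw(z_2z_3+1)^2/z_2^2=-(z_2^2+1)/z_2^2$, hence $a^2b+1=-1/z_2^2$ and $(a^2b+1)b=uw$, so $(\pm a,b)$ solves \eqref{round}.

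\textbf{Converse.} Take $(a,b)\in\Z[w]^2$ with $(a^2b+1)b=uw$ and $\|b\|=2$. From the treatment of the case $\|b\|=2$ immediately above --- where \eqref{kouprey} with $t=y/(x+2)$ forces $x=2u^{2j}$ and $b=-wu^{2j+1}$ --- we have $-b/(uw)=u^{2j}$, a square in $\Z[w]$; set $z_2:=u^{j}$ (equivalently, any square root of $-b/(uw)$), a unit with $b=-uwz_2^2$. Since $a^2b+1=uw/b\in\Z[w]$, the elements $z_1:=z_2(1\pm ab)(a^2b+1)$ and $z_3:=z_2(a^2b\pm a+1)$ lie in $\Z[w]$. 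Substituting $uw=a^2b^2+b=b(a^2b+1)$ one computes $z_2z_3+1=z_2^2(a^2b\pm a+1)+1=\bigl(uw-b(a^2b\pm a+1)\bigr)/uw=\mp ab/uw$; then $uw(z_2z_3+1)^2=a^2b^2/uw=(uw-b)/uw=z_2^2+1$, which is \eqref{riot}, and $z_1z_2+1=z_2^2(1\pm ab)(a^2b+1)+1=-(1\pm ab)+1=\mp ab=uw(z_2z_3+1)$, which is the second equation of \eqref{gift}. The first equation of \eqref{gift} (namely $z_2-z_1z_2z_3-z_1-z_3=0$) follows by the same substitution, and the third equation of \eqref{gift} is then automatic, being $-uw$ times the first. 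Thus $(z_1,z_2,z_3)$ is a $\Z[w]$-solution of \eqref{gift} with $\|b\|=2\|z_2\|^2=2$.

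\textbf{Main obstacle.} No step is conceptually deep; everything reduces to a finite computation in $\Z[w]$ using $uw=a^2b^2+b$. The one place requiring care is integrality: in the forward direction it is the observation $z_2\mid u$, which forces $z_2$ to be a unit so that $a$ is an algebraic integer and $\|b\|$ is exactly $2$; in the converse it is that $-b/(uw)$ is a perfect square in $\Z[w]$, which is supplied precisely by the prior determination $b=-wu^{2j+1}$ of all $\|b\|=2$ solutions (without it $-b/(uw)$ could be $-u^{2j}$, which is not a square in the real field $F_1$). Combined with Theorem~\ref{trails} --- which records the eight triples $\pm(a_1,a_2,a_3)$, all having $a_2=z_2\in\{\pm u^{\pm1},\pm u^{\pm7}\}$ --- this lemma will yield exactly the ten pairs $(\pm a,b)$ with $\|b\|=2$ appearing in Theorem~\ref{tips}.
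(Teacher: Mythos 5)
Your proof is correct and takes essentially the same route as the paper, which simply observes that $z_2$ is a unit and that $-b/(uw)$ is the square of a unit and then declares the rest ``algebraic verification''; you supply that verification explicitly, and your derivation of the unit property of $z_2$ from \eqref{rattle} is exactly the paper's. The only (harmless) difference is that you verify each direction produces solutions but do not check the two maps are mutually inverse, which the paper's proof mentions and which is used afterwards to count the $\|b\|=2$ solutions via Theorem~\ref{trails}; the lemma as stated, however, does not assert this.
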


\begin{proof}
Since $z_2$ is a unit of norm 1 and $-b/(wu)$ is the square of a unit from
above, the proof reduces to algebraic verification that the two maps are
well defined and inverse to each other.
\end{proof}

The pair $(\pm a,b)$ corresponds to a quadruplet of $(z_1,z_2,z_3)$,
so the sixteen solutions in Theorem \ref{trails} correspond to four pairs
of solutions here, namely $b=-wu^{2j+1}$ with $j=\pm1,\pm7$ and
$a=\pm(4-3w),\pm(2+w),\pm(149266-105547w),\pm(61828+43719w)$,
respectively.
In terms of $y_1,x_1,x_2$ the first map is
\[\left(y_1,x_1,x_2)=\pm(z_1z_2+1,-\frac{2}{z_2^2}(z_2z_3+1),2(z_1z_2+1)\right).\]

\ul{Case $\|b\| = -1$.}
We try to proceed as in the first two cases.
The curve $E\colon y^2=x(x+2)(x-1)$, elliptic curve 96A1 
in \cite{cremona} and 96.b3 in \cite{lmfdb}, has 
$E(\Q)=E[2]$.  The curve
$E^\sigma\colon y^2=x(x+4)(x-2)$, elliptic curve 192A2 in \cite{cremona}
and 192.a2 in \cite{lmfdb}, has
$E^\sigma(\Q)\cong\Z\times\Z/2\Z\times\Z/2\Z$.
The point $(x,y)=(4,8)$ is a generator for $E^\sigma(\Q)/\text{torsion}$, which corresponds to
$(x,y)=(2,-2w)$ on the original curve $E$.
Looking for rational preimages of $\langle(2,-2w),(1,0),(0,0)\rangle$ under
$[2]$ we find an additional point $P:=(w,w)$ satisfying $2P=(2,-2w)$.
So $E(F_1)$ is
$\langle P,(1,0),(0,0)\rangle\cong\Z\times\Z/2\Z\times\Z/2\Z$,
which Magma's \texttt{Generators} tells us directly.

Searching $E(F_1)=nP+E[2]$, $n\in\Z$, for $n<100$, the largest $n$ we
find with coordinates in $\Z[w]$ is $n=4$.  So
$E$ appears to have 23 affine $\Z[w]$-points:
\begin{align*}
(x,y)=&(0,0),(-2,0),(1,0),(-1,\pm w),(2,\pm2w),(4,\pm6w),\\
&(w,\pm w),(-w,\pm w),\left(-3w + 4,\pm(9w - 12)\right),\left(3w + 4,\pm(9w + 12)\right),\\
&\left(17w + 24,\pm(119w + 168)\right),\left(-17w + 24,\pm(119w - 168)\right),(25,\pm90w),\end{align*}
with $\|x\|=0,4,1,1,4,16,-2,-2,-2,-2,-2,-2,625$.
Of the six $x$'s with norm $-2$ only the 1st, 2nd, 5th, and 6th have
square $w(x+2)/(ux^2)$, giving
\begin{align*}
(a,b)=&(\pm1,-1-w),\left(\pm(1-w),1+w\right),\\
&\left(\pm(13+9w),-7+5w\right),\left(\pm(31-22w),-41-29w\right),
\end{align*}
which correspond to the first four pairs in the theorem.
We now have
to roll up our sleeves and do some more work to show we get no other solutions
in this case.

To simplify notation, for the remainder of  this section we set
$K=F_1=\Q(\alpha_1)=\Q(\sqrt{2})$.
Write $b = \pm u z^2$ for some unit $z$ of $\Z[w]$.
Setting $a':=ab$, we have ${a'}^2 = wu - b = wu \mp u z^2$ and
\be
\label{eq:pell_u}
wu = {a'}^2 \pm u z^2,
\ee
which is a ``generalized (Fermat--)Pell equation'' over $\Z[w]$
subject to the additional condition that $z$ be a unit.

The two choices of sign are equivalent under Galois conjugation,
because the conjugate of $uz^2$ is $-u^{-1} \ov{z}^2 = -u(\ov{z}/u)^2$.
We choose the minus sign, so that \eqref{eq:pell_u} is equivalent to
\begin{equation}
\label{eq:pell_v}
wu = \N_{L_1/K} (a' + vz),
\end{equation}
where
\begin{equation}
\label{roast}
v=u^{1/2}
\end{equation}
and $L_1$ is the quadratic extension $K(v)$ of~$K$.
Because $u$ has one positive and one negative conjugate,
this extension $L_1$ is a quartic number field with two real embeddings
and one conjugate pair of complex embeddings.  By Dirichlet's unit theorem,
then, the group $U_{L_1}$ of units of~$L_1$ has rank~$2$.  The image of
the norm map $\N_{L_1/K} : U_{L_1} \ra U_K$ has rank~$1$, because it
contains the unit $u^2 = \N_{L_1/K}(u)$, and is contained in the
\hbox{rank-$1$} group $U_K$.  Therefore $\ker(\N_{L_1/K})$ has rank~$1$,
and is thus of the form $\pm u_1^\Z$ for some $u_1$ (so that if
$u_1 = x_1 + v z_1$ then $(x_1,z_1)$ is a fundamental solution
of $x_1^2 - u z_1^2 = 1$ over~$\Z[w]$).  Hence the solutions
$a'+vz \in \Z[v]$ of \eqref{eq:pell_v} constitute a finite number of
cosets of $\pm u_1^\Z$, and in each coset the condition that
$\N_{K/\Q}(z) = \pm 1$ becomes an exponential diophantine equation
in one variable.  Thus the \hbox{$p$-adic} technique applies:
extend from $u_1^\Z$ to $u_1^{\Z_p}$ for some prime~$p$,
write $\N_{K/\Q}(z)$ as a function on $\Z_p$,
and count the preimages of $\pm 1$.  As it happens, our problem
gives rise to a very favorable case of this technique:
there is only one coset, and when we choose $p=2$ we find
four preimages, each corresponding to one of our known solutions.
(In general, not all \hbox{$p$-adic} solutions come from~$\Z$, and the
spurious ones must be ruled out by further analysis.)  The details follow.

 The unit group of~$L_1$ is generated mod $\pm 1$ by $v$ and
$v^3-v^2-v = -u + (u-1)v = -(1+w) + wv$.\footnote{ We
  obtained this using the built-in function \texttt{bnfinit} in
  \texttt{gp} \cite{pari}; these units $v$ and $v^3-v^2-v$ are also
  the generators of $U_{L_1} \bmod \{\pm 1\}$ listed in the LMFDB
  \cite{lmfdb} entry 4.2.1024.1 for~$L_1$.}  Of these, $v$ has norm
$-u$ but $-(1+w) + wv$ has norm~$1$, so the kernel of $\N_{L_1/K}:
U_{L_1} \ra U_K$ consists of the powers of
\begin{equation*}
u_1 := -(1+w) + wv
\end{equation*}
and their negatives.  Note that
$\N_{L_1/K}(1+v) = \N_{L_1/K}(1-v) = -w$ and that $(1+v)/(1-v) = -(1+w) + wv$;
since this is a unit, the ideals $(1+v)$ and $(1-v)$ are the same,
so each has square $(w)$, whence the ideal $(w)$ is ramified in~$L_1$
(and thus the rational prime $(2)$ of~$\Q$ is also totally ramified
in~$L_1$, where it factors as $(1+v)^4$).  Now since the ideal $(wu) = (w)$
is prime in~$K$\/ and ramified in~$L_1$, we know that the solutions
$\alpha \in \Z[v]$ of the equation $\N_{L_1/K}(\alpha) = wu$ either form
a single coset of $\pm u_1^\Z$ or do not exist at all; and a quick search
finds the solution
\begin{equation*}
\alpha_1 = 1+w+v = u+v,
\end{equation*}
so the general solution is $\alpha = \pm \alpha_1\0 u_1^k$.
(We could also have found an initial solution by working backwards from
one of the first four lines of the list in the theorem with $\|b\| = -1$.)
These solutions must be permuted by the Galois involution of $L_1/K$, 
and indeed we compute that $u-v = -\alpha_1 u_1$, so in general the
$\Gal(L_1/K)$ conjugate of $\alpha_1\0 u_1^k$ is $-\alpha_1\0 u_1^{1-k}$.

Recall that we seek $\alpha = a'+vz$ such that $\N_{L_1/K}(\alpha) = wu$
and $\N_{K/\Q}(z) = \pm 1$.  We now know that $\N_{L_1/K}(\alpha) = wu$
is equivalent to $\alpha = \pm \alpha_1\0 u_1^k$, and that the $v$~coefficient
of $\alpha_1\0 u_1^k$ is invariant under $k \llra 1-k$.  We may thus assume that
$k$ is~even.  Moreover the choice of sign in $\alpha = \pm \alpha_1\0 u_1^k$
does not affect $\N_{K/\Q}(z)$.  We tabulate $a',z,\N_{K/\Q}(z)$ for 
the five smallest $\{k,1-k\}$ pairs, listing the even $k$\/ first
in each pair:

{\small 
\[
\begin{array}{c||c|c|c|c|c}
k & 0,1 & 2,-1 & -2,3 & 4,-3 & -4,5
\cr \hline
a' & \pm(1+w) & \pm(5+3w) & \pm(21+15w) & \pm(97+69w) & \pm (449+317w)
\cr \hline
z & 1 & -(3+2w) & 13+10w & -(63+44w) & 289+204w
\cr \hline
\N_{K/\Q}(z) & 1 & 1 & -31 & 97 & 289
\end{array}
\]
}

 Extending this calculation further suggests that if $k'
\equiv k \pmod 4$ but $k' \neq k$, then the corresponding
values of $\N_{K/\Q}(z)$ are congruent modulo $2^{\val_2(k'-k)+3}$ but
not modulo~$2^{\val_2(k'-k)+4}$, and in particular that no value
appears more than once in each congruence class $k \equiv k_0 \pmod
4$.  Since each congruence class already contains one case of
$\N_{K/\Q}(z) = 1$, this would imply that there are no others, and
thus that our list of $\|b\| = -1$ solutions is complete.  In the
remainder of this section, we prove this (in the equivalent form of
Proposition \ref{oryx} below) by extending $\N_{K/\Q}(z)$ to a
continuous function from $k \in 2\Z_2$ to~$\Z_2$, and in effect
finding the valuation of this function's derivative.

We compute that $\N_{L_1/K} (1-u_1) = 4+2w$ and $\N_{L_1/\Q} (1-u_1) = 8$,
so in particular $\val_2(1-u_1) = 3/4$ and $\val_2(1-u_1^2) = 3/2 > 1$.
Thus $u_1^{2j}$ has a \hbox{$2$-adically} convergent binomial expansion
 \begin{equation*}
u_1^{2j} = \left(1 - \left(1-u_1^2\right)\right)^j
= \sum_{n=0}^\infty (-1)^n {j \choose n} (1-u_1^2)^n
\end{equation*}
for $j \in \Z$, which extends to a continuous function $j \mapsto u_1^{2j}$
from $\Z_2$ to $\Z_2[v]$.  The same is then true of $a'$ and~$z$,
which take values in $\Z_2[w]$, and of $\N_{K/\Q}(z)$, taking values in $\Z_2$.

We first determine the function $z = z(j)$.  For $n \ge 0$, write
\begin{equation*}
(1-u_1^2)^n = r_n + v s_n
\end{equation*}
with $r_n, s_n \in \Z[w]$.  Then
\begin{align*}
\alpha_1\0 u_1^{2j}
& = \sum_{n=0}^\infty (-1)^n {j \choose n} (u+v) (r_n + v s_n)
\\
& = \sum_{n=0}^\infty (-1)^n {j \choose n}
 \bigl( (ur_n + us_n) + v (r_n + u s_n) \bigr) \quad\mbox{using \eqref{roast}}.
\label{eq:binom_rs}
\end{align*}
Thus $z(j)$ is the $v$ coefficient 
\begin{equation*}
z(j) = \sum_{n=0}^\infty (-1)^n {j \choose n} (r_n + u s_n)
  = \sum_{n=0}^\infty (-1)^n {j \choose n} t_n,
\end{equation*}
where  
\begin{equation*}
t_n := r_n + u s_n
\end{equation*}
(see Table~\ref{tab:1} for the values of $r_n, s_n, t_n$ for $0\le n \le 4$).
Therefore
\be
\label{eq:ssum}
\N_{K/\Q}(z(j)) = z(j) \, \sigma(z(j))
  = \sum_{n_1=0}^\infty \sum_{n_2=0}^\infty
 (-1)^{n_1+n_2} {j \choose n_1}{j \choose n_2} \, t_{n_1} \, \sigma(t_{n_2}).
\ee
We proceed to study how each term in the double sum changes when $j$
is replaced by some $j'$ that is \hbox{$2$-adically} close to~$j$.

\begin{table}[h!]
  \caption{ $r_n$, $s_n$, $t_n$ for $n\leq 4$}\label{tab:1}
  \[
\begin{array}{c||c|c|c|c|c}
n & 0 & 1 & 2 & 3 & 4
\cr \hline
r_n & 1 & -4 - 4w & 104 + 72w & -2080 - 1472w & \m42176 + 29824w
\cr \hline
s_n & 0 & \m4 + 2w & -64 - 48w & 1344 + 944w & -27136 - 19200w
\cr \hline
t_n & 1 & \m4 + 2w & -56 - 40w & 1152 + 816w & -23360 - 16512w
\end{array}
\]
\end{table}

We observed already that $\val_2(1-u^2) = 3/2$.
Thus
\begin{equation*}
\val_2(r_n + v s_n) = \val_2(\bigl(1-u^2)^n\bigr) = 3n/2.
\end{equation*}
We claim
\begin{lemma}
\label{addax}

Each of $r_n$ and $s_n$, and thus also $t_n$,
has valuation at least $3n/2$.
\end{lemma}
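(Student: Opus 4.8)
The plan is to set aside the $2$-adic-analytic picture for this lemma and argue by a plain induction on~$n$, exploiting the multiplicativity $(1-u_1^2)^n = (1-u_1^2)\,(1-u_1^2)^{n-1}$ together with $v^2 = u$. Writing $(1-u_1^2)^{n-1} = r_{n-1} + v s_{n-1}$, expanding the product, and using that $\{1,v\}$ is a $K$-basis of $L_1$ (so the $K$-part and the $v$-part are well defined), one reads off the recursion
\[
r_n = r_1 r_{n-1} + u\,s_1 s_{n-1},\qquad s_n = r_1 s_{n-1} + s_1 r_{n-1}\qquad(n\ge 2),
\]
with seeds $r_1 = -4-4w$ and $s_1 = 4+2w$ from Table~\ref{tab:1}. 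Both recursions are $\Z[w]$-bilinear in $(r_{n-1},s_{n-1})$, and the only coefficients that occur are $r_1$, $u s_1$ and $s_1$; so it will be enough to check that $\val_2(r_1)$ and $\val_2(u s_1)=\val_2(s_1)$ are each at least $3/2$, after which every application of the recursion raises the valuations of $r_n$ and $s_n$ by at least $3/2$.

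For the base case $n=1$: since $u = 1+w$ has norm $-1$ it is a unit, so $\val_2(r_1) = \val_2(-4(1+w)) = \val_2(4) = 2$; and because $2$ ramifies in $K=\Q(\sqrt2)$ we have $\val_2(w) = \tfrac12$, so from $s_1 = 2(2+w) = 2wu$ we get $\val_2(s_1) = \val_2(2) + \val_2(w) = \tfrac32$. In particular $\val_2(r_1),\val_2(s_1)\ge 3/2$. For the inductive step, assume $\val_2(r_{n-1}),\val_2(s_{n-1})\ge 3(n-1)/2$. Using the ultrametric inequality and $\val_2(u)=0$, the recursion gives $\val_2(r_n)\ge\min\{\val_2(r_1)+\val_2(r_{n-1}),\ \val_2(s_1)+\val_2(s_{n-1})\}\ge \tfrac32+\tfrac{3(n-1)}2 = \tfrac{3n}2$, and likewise $\val_2(s_n)\ge\min\{\val_2(r_1),\val_2(s_1)\}+\tfrac{3(n-1)}2 = \tfrac{3n}2$. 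Finally $t_n = r_n + u s_n$ with $u$ a unit, so $\val_2(t_n)\ge\min\{\val_2(r_n),\val_2(s_n)\}\ge 3n/2$, which finishes the argument.

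The only point needing a moment's care — and the reason the bare recursion is the cleanest route — is that one cannot simply read the per-coordinate bound off the already-known identity $\val_2\bigl((1-u_1^2)^n\bigr) = 3n/2$: because $2$ ramifies in $L_1$, the $K$-part $r_n$ and the $v$-part $v s_n$ (each a priori possibly of valuation below $3n/2$) can combine to an element of strictly larger valuation, so the two coordinates genuinely have to be tracked separately. The induction above does precisely this, by never re-forming the sum. (Alternatively one could pin down the exact valuations by conjugating under $\Gal(L_1/K)$ and applying a lifting-the-exponent estimate to $u_1^{2n}-1$, but that is more than the lemma requires.)
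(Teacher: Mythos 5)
Your proof is correct, but it takes a genuinely different route from the paper's. The paper's argument is a short valuation-parity trick: it rewrites $r_n + v s_n = (r_n - s_n) + (1+v)s_n$, notes that $\val_2(1+v)=1/4$ while $\val_2$ takes values in $\tfrac12\Z\cup\{\infty\}$ on $K$, concludes that the two summands have \emph{distinct} valuations, and hence that each is at least $\val_2\bigl((1-u_1^2)^n\bigr)=3n/2$; the bound for $r_n=(r_n-s_n)+s_n$ then follows. This addresses exactly the cancellation worry you raise at the end (the naive split $r_n + v s_n$ fails because $\val_2(v)=0$, so both coordinates lie in $\tfrac12\Z$ and could cancel), and it costs nothing beyond the already-established facts $\val_2(1-u_1^2)=3/2$ and $(1+v)^4=(2)$. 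Your induction via the recursion $r_n=r_1r_{n-1}+u\,s_1s_{n-1}$, $s_n=r_1s_{n-1}+s_1r_{n-1}$ (which is correctly derived from $v^2=u$, and whose seeds $\val_2(r_1)=2$, $\val_2(s_1)=3/2$ check out since $r_1=-4u$ and $s_1=2wu$) sidesteps the cancellation issue entirely by never re-forming the sum, and is more elementary in that it uses only the ramification of $2$ in $K$ (to get $\val_2(w)=1/2$) and the ultrametric inequality, never the quartic field $L_1$ or the valuation of $1+v$. The trade-off is that your argument needs the explicit values of $r_1,s_1$ and an induction, whereas the paper's is a two-line deduction from data it has already set up for the surrounding $2$-adic analysis. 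Both are complete proofs of the lemma.
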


\begin{proof}

We have seen that the ideal $(1+v)^4$ of $L_1$ equals $(2)$,
so $\val_2(1+v) = 1/4$.  Since $r_n + v s_n = (r_n - s_n) + (1+v) s_n$
has valuation $3n/2$, and the image of $K$ under $\val_2$ is
$\frac12 \Z \cup \{\infty\}$, the terms $r_n - s_n$ and $(1+v) s_n$
have different valuations, so each of these valuations must be at least $3n/2$,
and then the same is true of the valuation of $r_n = (r_n-s_n) + s_n$
and Lemma \ref{addax} is proved.
\end{proof}

As for the factor ${j \choose n_1}{j \choose n_2}$ in \eqref{eq:ssum},
we show

\begin{lemma}
\label{eland}

If $n_1,n_2$ are nonnegative integers, then
\begin{equation*}
\val_2\left(
  {j' \choose n_1}{j' \choose n_2} - {j \choose n_1}{j \choose n_2}
\right)
\ge \val_2(j'-j) - n_1 - n_2 + 1
\end{equation*}
holds for all $j,j' \in \Z_2$,
with strict inequality if both $n_1$ and $n_2$ are positive.
\end{lemma}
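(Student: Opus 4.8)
The plan is to deduce the estimate from two elementary facts about $2$-adic binomial coefficients and then combine them multiplicatively. Throughout I write $P_n(x):=\binom{x}{n}$, regarded as a polynomial of degree $n$ over $\Q$, and set $h:=j'-j$, so that the four binomial coefficients in the statement are $P_{n_1}(j'),P_{n_2}(j'),P_{n_1}(j),P_{n_2}(j)$. The first input bound is that $\val_2\bigl(P_n(j)\bigr)\ge-\val_2(n!)=-(n-s_2(n))$ for $j\in\Z_2$, since $n!\,P_n(j)=j(j-1)\cdots(j-n+1)\in\Z_2$; here $s_2$ is the binary digit sum, so $\val_2\bigl(P_n(j)\bigr)\ge-n$ always and $\val_2\bigl(P_n(j)\bigr)\ge-n+1$ as soon as $n\ge1$. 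The second is that $\val_2\binom{h}{k}\ge\val_2(h)-\val_2(k!)\ge\val_2(h)-k+1$ for $k\ge1$, because in $\binom{h}{k}=h(h-1)\cdots(h-k+1)/k!$ the factor $h$ contributes $\val_2(h)$ and the remaining numerator factors lie in $\Z_2$.

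First I would treat the one-variable difference. Vandermonde's identity $\binom{j+h}{n}=\sum_{k=0}^{n}\binom{h}{k}\binom{j}{n-k}$ is a polynomial identity in $j$ and $h$, hence valid for $j,h\in\Z_2$, and it gives $P_n(j')-P_n(j)=\sum_{k=1}^{n}\binom{h}{k}P_{n-k}(j)$. By the two input bounds the $k$-th summand has valuation at least $\bigl(\val_2(h)-k+1\bigr)+\bigl(-(n-k)\bigr)=\val_2(h)-n+1$, independently of $k$, so
\[
\val_2\bigl(P_n(j')-P_n(j)\bigr)\ \ge\ \val_2(h)-n+1 .
\]
This is already the claimed inequality when one of $n_1,n_2$ is zero.

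Next I would handle the product via the telescoping identity
\[
P_{n_1}(j')P_{n_2}(j')-P_{n_1}(j)P_{n_2}(j)
=\bigl(P_{n_1}(j')-P_{n_1}(j)\bigr)P_{n_2}(j')+P_{n_1}(j)\bigl(P_{n_2}(j')-P_{n_2}(j)\bigr).
\]
Bounding the first summand by the one-variable estimate for $n_1$ together with $\val_2\bigl(P_{n_2}(j')\bigr)\ge-n_2$, and the second by $\val_2\bigl(P_{n_1}(j)\bigr)\ge-n_1$ together with the one-variable estimate for $n_2$, each summand has valuation $\ge\val_2(h)-n_1-n_2+1$, whence the same holds for the difference. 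If moreover $n_1,n_2\ge1$, I would instead use the sharper bounds $\val_2\bigl(P_{n_2}(j')\bigr)\ge-n_2+1$ and $\val_2\bigl(P_{n_1}(j)\bigr)\ge-n_1+1$ (valid precisely because $s_2(n_i)\ge1$), which raises the valuation of each summand — and hence of the difference — to $\ge\val_2(h)-n_1-n_2+2$, giving the strict inequality.

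I do not anticipate a real obstacle: the whole argument is a short valuation computation, and the only delicate point is the bookkeeping between the regimes $n\ge1$ and $n=0$, since the single extra unit of valuation that yields the strict inequality comes exactly from $s_2(n)\ge1$ when $n\ge1$. One should also check that the degenerate cases $n_1=0$ or $n_2=0$ (where one summand of the telescoping identity vanishes and no strict inequality is asserted) and the trivial case $j=j'$ are consistent with the statement as written.
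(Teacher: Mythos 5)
Your proof is correct, and it reaches the stated bounds by a genuinely different decomposition than the paper's. The paper's argument is a two-line denominator clearing: it sets $P(X)=n_1!\,n_2!\binom{X}{n_1}\binom{X}{n_2}\in\Z[X]$, notes that $P(j')-P(j)$ is a multiple of $j'-j$ and hence has valuation at least $\val_2(j'-j)$, and then divides back by $n_1!\,n_2!$ using $\val_2(n!)\le n$, with strictness for $n>0$. You instead make the divisibility by $h=j'-j$ explicit term by term via the Chu--Vandermonde expansion $\binom{j+h}{n}-\binom{j}{n}=\sum_{k\ge1}\binom{h}{k}\binom{j}{n-k}$, and then telescope the product difference into two one-variable differences; the extra unit of valuation in the strict case comes from the undifferenced factor $\binom{j}{n_i}$ via $s_2(n_i)\ge1$, which is exactly the same arithmetic input ($\val_2(n!)=n-s_2(n)$) that powers the paper's strictness claim. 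So the two proofs share their number-theoretic core but organize the polynomial difference differently: the paper's is shorter, while yours exhibits the difference as an explicit sum from which one could read off which terms dominate. One simplification available to you: since $\binom{j}{m}\in\Z_2$ for all $j\in\Z_2$ (integer-valued polynomials are $\Z_2$-valued by density of $\Z$ and continuity), your first input bound can be replaced by $\val_2\bigl(\binom{j}{m}\bigr)\ge0$, which subsumes both $\ge-m$ and $\ge-m+1$ and in fact yields the slightly stronger conclusion $\val_2(\,\cdot\,)\ge\val_2(j'-j)-\max(n_1,n_2)+1$; the lemma as stated is of course all that is needed downstream.
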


\begin{proof}

Let $P(X) = n_1! n_2! {X \choose n_1}{X \choose n_2} \in \Z[X]$.
Then
\begin{equation*}
  {j' \choose n_1}{j' \choose n_2} - {j \choose n_1}{j \choose n_2}
  = \frac{P(j') - P(j)}{n_1! \, n_2!}.
\end{equation*}
The numerator is a multiple of $j'-j$, and thus has valuation at least
$\val_2(j'-j)$.  For the denominator, we use the inequality
\be
\label{eq:v(n!)}
  \val_2(n!) \le n,
\ee
which is valid for all nonnegative integers~$n$,
and strict for $n>0$.  (The difference is the number of 1s
in the binary representation of~$n$; this is a known consequence of
the general formula for $\val_p(n!)$ in terms of the \hbox{base-$p$}
representation of~$n$.)  This completes the proof of Lemma \ref{eland};
it might seem that $(n_1,n_2)=(0,0)$ is an exception, but in this case
${j' \choose n_1}{j' \choose n_2} - {j \choose n_1}{j \choose n_2} = 1 - 1 = 0$
for all $j,j'$ and there is nothing to prove.
\end{proof}

Combining Lemmas \ref{addax} and \ref{eland} gives

\begin{lemma}
\label{impala}

If $n_1,n_2$ are nonnegative integers, then
\begin{equation*}
 \val_2\left(
  \left[
     {j' \choose n_1}{j' \choose n_2} - {j \choose n_1}{j \choose n_2}
  \right]
   \, t_{n_1} \, \sigma(t_{n_2})
 \right)
 \ge \val_2(j'-j) + \frac{n_1 + n_2}{2} + 1
\end{equation*}
holds for all $j,j' \in \Z_2$,
with strict inequality if both $n_1$ and $n_2$ are positive.\qed
\end{lemma}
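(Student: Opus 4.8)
The plan is to obtain Lemma~\ref{impala} by simply multiplying the three valuation estimates already in hand. The first point to nail down is that the $2$-adic valuation $\val_2$ (normalized by $\val_2(2)=1$) is invariant under $\Gal(K/\Q)$: since $2$ is totally ramified in $K=\Q(\sqrt{2})$, there is a single prime of $K$ above $2$, and every field automorphism must send it to itself, so $\val_2(\sigma(t_{n_2})) = \val_2(t_{n_2})$.

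With that in place, I would first apply Lemma~\ref{addax} to the two ``$t$'' factors, giving $\val_2(t_{n_1}) \ge 3n_1/2$ and $\val_2(\sigma(t_{n_2})) = \val_2(t_{n_2}) \ge 3n_2/2$. Next I would apply Lemma~\ref{eland} to the bracketed difference of products of binomial coefficients, which bounds its valuation below by $\val_2(j'-j) - n_1 - n_2 + 1$, the inequality being strict whenever both $n_1$ and $n_2$ are positive. Since the valuation of a product is the sum of the valuations of its factors, adding these three lower bounds yields
\begin{equation*}
\val_2(\cdots) \;\ge\; \bigl(\val_2(j'-j) - n_1 - n_2 + 1\bigr) + \tfrac{3n_1}{2} + \tfrac{3n_2}{2} \;=\; \val_2(j'-j) + \tfrac{n_1+n_2}{2} + 1,
\end{equation*}
and the strictness assertion is inherited verbatim from Lemma~\ref{eland}.

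I do not expect any genuine obstacle: once Lemmas~\ref{addax} and~\ref{eland} are available, the argument is pure bookkeeping with additivity of valuations. The only points calling for a word of care are the Galois-invariance of $\val_2$ noted above, and the degenerate possibilities that some $t_n$ vanishes (in which case the left-hand side is $+\infty$ and there is nothing to prove) or that $n_1=n_2=0$ (in which case the bracket is $1-1=0$, handled exactly as in Lemma~\ref{eland}).
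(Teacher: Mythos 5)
Your proposal is correct and is essentially the paper's own argument: the paper gives Lemma~\ref{impala} with no further proof precisely because it follows by adding the bound of Lemma~\ref{eland} to the bounds $\val_2(t_{n_1})\ge 3n_1/2$ and $\val_2(\sigma(t_{n_2}))=\val_2(t_{n_2})\ge 3n_2/2$ from Lemma~\ref{addax}, exactly as you do. Your explicit remarks on the Galois-invariance of $\val_2$ (via total ramification of $2$) and on the degenerate cases are sound and, if anything, slightly more careful than the paper's silent treatment.
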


Recall that our aim is to prove 

\begin{proposition}
\label{oryx}

If $j',j \in \Z_2$ with $j' \equiv j \bmod 2$ then
\begin{equation*}
 \val_2(\N_{K/\Q}\0 z(j') - \N_{K/\Q}\0 z(j)) = \val_2(j'-j) + 4.
\end{equation*}
\end{proposition}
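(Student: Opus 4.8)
The plan is to treat $f(j) := \N_{K/\Q}\bigl(z(j)\bigr)$ as a $2$-adic function and estimate its divided difference uniformly. Let $\sigma$ generate $\Gal(K/\Q)$, so $\N_{K/\Q}(z) = z\,\sigma(z)$, and for $j \neq j'$ (the case $j=j'$ being trivial) set $h = h(j,j') := \bigl(z(j')-z(j)\bigr)/(j'-j)$. Since $j'-j \in \Q$ is fixed by $\sigma$,
\begin{equation*}
f(j') - f(j) = z(j')\,\sigma z(j') - z(j)\,\sigma z(j) = (j'-j)\Bigl(h\,\sigma z(j') + z(j)\,\sigma h\Bigr),
\end{equation*}
so it suffices to prove $\val_2\bigl(h\,\sigma z(j') + z(j)\,\sigma h\bigr) = 4$ whenever $j \equiv j' \pmod 2$.

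Next I would expand everything in the binomial basis. Writing $D_n(j,j') := \bigl(\binom{j'}{n}-\binom{j}{n}\bigr)/(j'-j)$, which lies in $\tfrac1{n!}\Z_2$ because $\binom{X}{n}-\binom{Y}{n}$ is divisible by $X-Y$ inside $\tfrac1{n!}\Z[X,Y]$, the expansion $z(j) = \sum_{n\ge0}(-1)^n\binom{j}{n}t_n$ gives $z(j) = 1 + \epsilon(j)$ with $\epsilon(j) = \sum_{n\ge1}(-1)^n\binom{j}{n}t_n$ and $h = \sum_{n\ge1}(-1)^n D_n(j,j')\,t_n$; both series converge $2$-adically since $\val_2(t_n)\ge 3n/2$ by Lemma~\ref{addax} while $\val_2\bigl(\binom{j}{n}\bigr),\ \val_2\bigl(D_n(j,j')\bigr)\ge -\val_2(n!)$. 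Substituting $z = 1+\epsilon$ yields
\begin{equation*}
h\,\sigma z(j') + z(j)\,\sigma h = \Tr_{K/\Q}(h) + h\,\sigma\epsilon(j') + \epsilon(j)\,\sigma h .
\end{equation*}

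The heart of the argument will be to evaluate the right-hand side modulo $2^5$. From Table~\ref{tab:1}, $t_0 = 1$, $t_1 = 4+2w$, $t_2 = -56-40w$, so $\Tr_{K/\Q}t_1 = 8$, $\Tr_{K/\Q}t_2 = -112$, $\N_{K/\Q}t_1 = 8$, $\N_{K/\Q}t_2 = -64$, $t_1\,\sigma t_2 = -64+48w$; also $D_1 = 1$ and $D_2 = (j'+j-1)/2$, of valuation $-1$ since $j'+j$ is even. All contributions with $n\ge 3$ will be seen to have valuation $\ge 5$, the coefficients of $w$ cancel identically (as they must, the left side being rational; their leading sum is $48\bigl(D_2(j'-j)+\binom{j}{2}-\binom{j'}{2}\bigr) = 0$), and one is left with
\begin{align*}
\Tr_{K/\Q}(h) &\equiv -8 - 112\,D_2 = -8 - 56(j'+j-1), \\
h\,\sigma\epsilon(j') + \epsilon(j)\,\sigma h &\equiv 8j' + 8j \pmod{2^5}.
\end{align*}
Adding these gives $h\,\sigma z(j') + z(j)\,\sigma h \equiv 48\bigl(1-(j'+j)\bigr)\pmod{2^5}$, and since $j'+j$ is even the factor $1-(j'+j)$ is odd, so this has valuation exactly $\val_2(48) = 4$, which proves the proposition.

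The hard part will be the bookkeeping that justifies the two displayed congruences, i.e.\ that every discarded term has valuation $\ge 5$. I plan to write $h = -t_1 + D_2 t_2 + (\val_2\ge\tfrac72)$, $\epsilon(j) = -j\,t_1 + \binom{j}{2}t_2 + (\val_2\ge\tfrac92)$, and likewise for $\sigma\epsilon(j')$ and $\sigma h$, multiply out, and check that past the leading $8j'$, $8j$ every surviving piece---the rational terms $64\,D_2 j'$, $-64\,D_2\binom{j'}{2}$, their $j$-counterparts, and all cross terms carrying a tail factor---has valuation $\ge 5$; for $\Tr_{K/\Q}(h) = \sum_{n\ge1}(-1)^n D_n\,\Tr_{K/\Q}t_n$ the $n\ge 3$ terms have valuation $\ge \lceil 3n/2\rceil - \val_2(n!)$ (using $\Tr_{K/\Q}t_n\in\Z$ and Lemma~\ref{addax}), which is $\ge 5$ for $n\ge 5$, and for the two borderline indices one uses $\Tr_{K/\Q}t_3 = 2304$, $\Tr_{K/\Q}t_4 = -46720$ with $\val_2(D_3)\ge -1$, $\val_2(D_4)\ge -2$ (these last two from reducing $6\,D_3$ and $24\,D_4$ modulo~$2$, where $j\equiv j'\pmod 2$ makes the reductions vanish enough). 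Each of these is a finite, mechanical valuation estimate once the binomial expansions and Lemma~\ref{addax} are in hand.
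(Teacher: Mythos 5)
Your proposal is correct, and it reaches the paper's conclusion by a genuinely different decomposition. The paper works directly with the double sum \eqref{eq:ssum} for $\N_{K/\Q}(z(j'))-\N_{K/\Q}(z(j))$: Lemma~\ref{eland} (valuations of differences of products of binomial coefficients) combines with Lemma~\ref{addax} into Lemma~\ref{impala}, which truncates the sum to finitely many $(n_1,n_2)$; the surviving terms are then packaged into the explicit quartic $1+2^4\bigl(733j-\tfrac{4051}{3}j^2+740j^3-\tfrac{368}{3}j^4\bigr)+\eta(j)$, whose finite differences are analyzed term by term. You instead factor out $j'-j$ at the outset via the product-rule identity $f(j')-f(j)=(j'-j)\bigl(h\,\sigma z(j')+z(j)\,\sigma h\bigr)$ (valid because $j'-j$ is $\sigma$-fixed, and the bracket is rational since it equals its own $\sigma$-conjugate), which converts the problem into showing that a single quantity has valuation exactly $4$, computable modulo $2^5$. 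This replaces Lemmas~\ref{eland} and~\ref{impala} by the elementary divisibility $\binom{X}{n}-\binom{Y}{n}\in(X-Y)\cdot\tfrac{1}{n!}\Z[X,Y]$ together with the parity refinement $\val_2(D_4)\ge-2$ for $j\equiv j'\pmod 2$ (which you do need: without it the $n=4$ tail term of $h$ is only bounded by valuation $3$ and would not clear the $7/2$ threshold). I checked the arithmetic: the $w$-coefficients cancel identically as they must; the borderline rational terms such as $64D_2\binom{j'}{2}$, $64jD_2$, and $64D_2\bigl(\binom{j}{2}+\binom{j'}{2}\bigr)$ all have valuation at least $5$ because $\binom{j}{2},\binom{j'}{2}\in\Z_2$; and the total $48\bigl(1-(j+j')\bigr)\pmod{2^5}$ has valuation exactly $4$ since $j+j'$ is even. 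Both arguments rest on the same foundation (the binomial expansion of $u_1^{2j}$, Lemma~\ref{addax}, and the Table~\ref{tab:1} data through $n=4$); yours buys a cleaner structural isolation of the ``derivative'' and avoids the explicit polynomial model, at the cost of slightly more delicate bookkeeping of which cross terms survive modulo $2^5$.
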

\noindent (We took $k = 2j$, so if we likewise set $k' = 2j'$ then
$k' \equiv k \pmod 4$ makes $j' \equiv j \pmod 2$, and
$\val_2(k'-k) + 3 = \val_2(j'-j) + 4$.)

\begin{proof}

By Lemma \ref{impala} it suffices to prove this with
the double sums \eqref{eq:ssum} for $\N_{K/\Q}\0 z(j')$ and $\N_{K/\Q}\0 z(j)$
replaced by finite sums over $n_1 + n_2 < 6$ together with
$(n_1,n_2) = (0,6)$ and $(6,0)$.  We next check that
it is enough to consider $n_1,n_2 \in \{0,1,2\}$ together with
$(n_1,n_2) = (0,4)$ and $(4,0)$.

Combining the $(n_1,n_2)$ and $(n_2,n_1)$ terms when $n_1 \neq n_2$,
we rewrite~\eqref{eq:ssum} as
\begin{align*}
\label{eq:ssum1}
\N_{K/\Q}(z(j))
  = &
  \sum_{n=0}^\infty {j \choose n}^2 \, \N_{K/\Q}(t_n)
\\
&  + \sum_{n_1=0}^\infty \sum_{n_2=n_1+1}^\infty
   (-1)^{n_1+n_2} {j \choose n_1}{j \choose n_2}
     \, \Tr_{K/\Q}(t_{n_1} \, \sigma(t_{n_2})).
\end{align*}
We computed $r_n,s_n,t_n$ for $0\leq n \le 8$; 
Table~\ref{tab:1} lists the values for $n \le 4$.

 Each $t_n$ turns out to have valuation $3n/2$, attaining equality in
\linebreak Lemma~\ref{addax}, so $\val_2(\N_{K/\Q}(t_n)) = 3n$.  We next tabulate
$\val_2\bigl( \Tr_{K/\Q}(t_{n_1} \, \sigma(t_{n_2})) /\allowbreak (n_1! \, n_2!) \bigr)$
for $n_1 < 3$ and $ n_1 < n_2 \le 6$, which includes all cases of
$n_1 + n_2 \le 6$ with $n_1 < n_2$, and thus confirms our claim that
the terms with $n_1,n_2 \le 2$ or $(n_1,n_2) = (0,4)$ and $(4,0)$ suffice:
\[
\begin{array}{c||c|c|c|c|r|r}
    n_2 & 1 & 2 & 3 & 4 & 5 & 6
\cr \hline
n_1 = 0 & 3 & 3 & 7 & 4 & 6 & 6
\cr
n_1 = 1 &   & 6 & 6 & 6 & 7 & 12
\cr
n_1 = 2 &   &   & 7 & 6 & 10 & 8
\end{array}
\]
\noindent (Note that we are using here the actual $\val_2(n_i!)$,
not the upper bound~\eqref{eq:v(n!)}.)

 Using only the terms for which
\[
\val_2(\N_{K/\Q}(t_n)/n!^2) \quad \text{or} \quad
\val_2\left( \frac{\Tr_{K/\Q}(t_{n_1} \, \sigma(t_{n_2})}{n_1! \, n_2!} \right)
\]
is at most~$4$ (and thus omitting also the pair $(n_1,n_2)=(1,2)$ and $(2,1)$), we compute
\begin{equation*}
 \val_2(\N_{K/\Q}\0 z(j))
  = 1 + 2^4\left( 733j - \frac{4051}{3} j^2 + 740 j^3 - \frac{368}{3} j^4 \right)
  + \eta(j)
\end{equation*}
for some function $\eta: \Z_2 \to 2^4 \Z_2$ such that
$\val_2(\eta(j') - \eta(j)) > \val_2(j'-j) + 4$ for all distinct
$j,j' \in \Z_2$.
So it remains to check that the difference between the values of
$1 + 2^4( 733X - \frac{4051}{3} X^2 + 740 X^3 - \frac{368}{3} X^4 )$
at $X=j'$ and $X=j$ has valuation exactly $\val_2(j'-j) + 4$
provided $j' \equiv j \pmod 2$.
The term $1$ does not change; the term $2^4 733 X$ changes by
$2^4 733 (j'-j)$, which has the desired valuation $\val_2(j'-j) + 4$;
the term $2^4 \frac{4051}{3} X^2$ changes by
$2^4 \frac{4051}{3} (j'-j) (j'+j)$, which has valuation strictly greater
than $\val_2(j'-j) + 4$ because $\val_2(j'+j) > 0$ by the assumption
$j' \equiv j \bmod 2$; and the change in each of the remaining terms
$2^4 740 X^3$ and $-2^4 \frac{368}{3} X^4$ is $2^4(j'-j)$ times
some multiple of~$2$, and thus has valuation $> \val_2(j'-j) + 4$
for all $j,j' \in \Z_2$.
\end{proof}

 This proves Proposition \ref{oryx},
and thus completes at last our proof that our list of
$\|b\| = -1$ solutions is complete, concluding the proof of
Theorem~\ref{tips}. 

\begin{remark1}
We could also solve the case $\|b\| = -2$ using this method.
Here $b = \pm w z^2$ so $wu = {a'}^2 + b = {a'}^2 \pm w z^2$
for some unit~$z$.  Again the two choices of sign are equivalent
under Galois conjugation.  This time we are working in the quartic field
$L_2 = \Q(\root 4 \of 2\,) = \Q(v)$ where $v^2 = w$, number field 4.2.2048.1 in
\cite{lmfdb}; the units are generated
mod~$\pm 1$ by $1+v$ and $1-v$, and the kernel of $\N_{L_2/K}\colon U_L \ra U_K$
is $\pm u_2^\Z$ where $u_2 = -(1+v)/(1-v) = (3+2w) + (2+2w) v$.
The solutions of $\N_{L_2/K} \alpha = wu$ in $\Z[v]$ are
$\pm \alpha_2\0 u_2^k$ where $k\in\Z$ and $\alpha_2 = u(w-v)$,
with $\pm \alpha_2\0 u_2^{1-k}$ the $\Gal(L_2/K)$ conjugate of
$\pm \alpha_2\0 u_2^k$.  This time there is only one pair of solutions
$\alpha = \pm a' \pm v z$ with $\N_{K/\Q} z = \pm 1$, namely
$\pm uw \pm uv$ itself (with $k=0,1$).  Again this can be proved
by fixing the parity of~$k$ (which loses no generality thanks to the
symmetry $k \llra 1-k$) and regarding $\alpha_2\0 u_2^k$ as a
$\Z_2[v]$-valued function of $k \in 2\Z_2$.  We spare 
the details, for both our sake and the reader's.
\end{remark1}

\begin{corollary}
\label{pot}
There are exactly ten $\Z[\sqrt{2}]$-PCFs for $\sqrt{2+\sqrt{2}}$
of type $(1,2)$.
They are
\begin{align*}
\label{rind}
  \sqrt{2+\sqrt{2}}
  & =  [1+\sqrt{2},\,\overline{-2,\,2+2\sqrt{2}}]\\
  &= [1,\,\overline{2\sqrt{2}-2,\,2}]\\
  & =  [-1+2\sqrt{2},\,\overline{26+18\sqrt{2},\,-2+4\sqrt{2}}]\\
  & =  [5+3\sqrt{2},\,\overline{62-44\sqrt{2},\,10+6\sqrt{2}}]\\
  & =  [\sqrt{2},\,\overline{2,\,2\sqrt{2}}]\\
  & =  [2+\sqrt{2},\,\overline{2-2\sqrt{2},\,4+2\sqrt{2}}]\\
  & =  [2,\,\overline{-4-2\sqrt{2},\,4}]\\
  & =  [2+2\sqrt{2},\,\overline{8-6\sqrt{2},\,4+4\sqrt{2}}]\\
  & =  [-182+130\sqrt{2},\,
    \overline{-123656-87438\sqrt{2},\,-364+260\sqrt{2}}]\\
  & =  [442+312\sqrt{2},\,\overline{-298532+211094\sqrt{2},\,884+624\sqrt{2}}].
\end{align*}
\end{corollary}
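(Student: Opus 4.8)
The plan is to read off Corollary~\ref{pot} from Theorem~\ref{tips} by (i) transporting each $\Z[\sqrt2]$-point of $E(2+\sqrt2)$ to the PCF it parametrizes, (ii) discarding those points that do not give PCFs of $\sqrt{2+\sqrt2}$ of type $(1,2)$, and (iii) checking convergence with Algorithm~\ref{con}. Concretely, Lemma~\ref{radios} (with $p=2$, $\pi=2+\sqrt2$) together with \eqref{redo} sends a $\Z[\sqrt2]$-point $(a,b)$ of $E(2+\sqrt2)\colon(a^2b+1)b=2+\sqrt2$ to the point $(y_1,x_1,x_2)=(ab,\,2a,\,2ab)$ of $V(2+\sqrt2)_{1,2}'$ — here one uses $(ab)^2=(2+\sqrt2)-b$ to verify $x_1=-2ab/((ab)^2-2-\sqrt2)=2a$ — and hence to the formal PCF $\alpha_2=[\,ab,\overline{2a,2ab}\,]$. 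Conversely, any $\Z[\sqrt2]$-PCF of $\sqrt{2+\sqrt2}$ of type $(1,2)$ determines a point of $V(2+\sqrt2)_{1,2}(\Z[\sqrt2])$; since $A=1\neq0$ and $B^2-4AC=4(2+\sqrt2)\neq0$, the only components are the line $V_{1,2}$ and $V(2+\sqrt2)_{1,2}'$, and all points of $V_{1,2}$ are divergent by Corollary~\ref{uber}. So every such PCF arises from one of the points listed in Theorem~\ref{tips}.

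Next I would sort that list. Theorem~\ref{tips} produces the extraneous point $(a,b)=(0,wu)$, which maps to $(y_1,x_1,x_2)=(0,0,0)\in V_{1,2}$ (divergent, hence contributing nothing), together with twenty points $(\pm a,b)$, one $\pm$-pair for each of the ten displayed values of $b$. For each of these twenty points the PCF $[\,ab,\overline{2a,2ab}\,]$ converges: this is exactly the computation in the proof of Proposition~\ref{stunts}, where \eqref{goods1} gives $x_1x_2(x_1x_2+4)=4x_1^2\alpha_2^2>0$, forcing $x_1x_2>0$ or $x_1x_2<-4$, which defeats INEQ~\ref{grits} and the trace condition in Algorithm~\ref{con}; so each of the twenty converges, necessarily to an element of $\B=\{\alpha_2,-\alpha_2\}$.

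Finally I would pin down the value and the count. Within a $\pm$-pair, $[\,-ab,\overline{-2a,-2ab}\,]$ is obtained from $[\,ab,\overline{2a,2ab}\,]$ by negating every partial quotient, and negating all partial quotients of a convergent continued fraction negates its value (because $\phi_{-c}(-z)=-\phi_c(z)$, so all convergents negate); hence exactly one member of each pair converges to $+\sqrt{2+\sqrt2}$ and the other to $-\sqrt{2+\sqrt2}$. Thus the twenty points give exactly ten PCFs of $\sqrt{2+\sqrt2}$ of type $(1,2)$, and these ten are pairwise distinct since $(a,b)$ is recovered from $(y_1,x_1)$ by $a=x_1/2$, $b=2y_1/x_1$ (and $a\neq0$ throughout). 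Substituting the ten pairs $(a,b)$ of Theorem~\ref{tips} into $[\,ab,\overline{2a,2ab}\,]$ then yields the displayed list (for example $b=-u$, $a=-1$ gives $ab=1+\sqrt2$, $2a=-2$, $2ab=2+2\sqrt2$, i.e. $[\,1+\sqrt2,\overline{-2,\,2+2\sqrt2}\,]$; and $b=-w/u^{13}$, $a=-13wu^{10}$ gives $ab=26u^{-3}=-182+130\sqrt2$, and so on). The only non-mechanical part is this final sign bookkeeping and the explicit arithmetic in $\Q(\sqrt2)$; the substantive difficulty — completeness of the list of $(a,b)$'s — has already been settled in Theorem~\ref{tips}, so no serious obstacle remains for this corollary.
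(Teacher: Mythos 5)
Your proposal is correct and follows essentially the same route as the paper: Theorem~\ref{tips} supplies the complete list of points, Lemma~\ref{radios} and \eqref{redo} recover the PCF as $[ab,\overline{2a,2ab}]$, and convergence is settled by Algorithm~\ref{con} via the positivity argument already recorded in the proof of Proposition~\ref{stunts}. You merely spell out more explicitly than the paper does the sign bookkeeping (that exactly one of each pair $(\pm a,b)$ converges to $+\sqrt{2+\sqrt{2}}$), which is a harmless and correct refinement of the same argument.
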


\begin{proof}

Again we check convergence using Algorithm \ref{con}.

\end{proof}
\noindent For the last continued fraction $[b_1, \overline{a_1, a_2}]$
we have 
$a_1\alpha_2-b_1a_1-1\approx 0.995825$, which means that every extra
significant digit requires $\approx -2/(2\log_{10}(0.995825))\approx 550 $
more convergents by Remark \ref{pudu}\ref{pudu3}.

\begin{remark1}
          In keeping with Proposition \ref{stunts}, the number $21$ of
          $\OO_1$-points on $E(2+\sqrt{2})$ in Theorem
          \ref{tips} (which is the same as the number of $\OO_1$-points
          on $V(2+\sqrt{2})_{1,2}'$ by Remark \ref{pinto})
          is congruent to $1\bmod4$.  Likewise also in keeping
          with Proposition \ref{stunts}, the number $10$ of 
          $\OO_{1}$-PCFs of $\alpha_2$ of type $(1,2)$
          is divisible by $2$.
        \end{remark1}

\appendix
\section{Proofs of Convergence Results}

As advertised we now prove the results in Section \ref{convergence}.
By ``converge'' we mean converge in the $\BP^1(\C)$ metric,
so in particular we say the limit exists even if it is converging to infinity.

\begin{proposition}\label{giraffe}
Let $A=\left[\begin{smallmatrix}a&b\\
c&d\end{smallmatrix}\right],ad-bc=\varepsilon=\pm1$,
and $z\in\BP^1(\C)$.
Let
\[w=\lim_{n\to\infty}\oA^n(z)\]
if the limit exists.
Let $\lambda_\pm$ be the eigenvalues of $A$ chosen so that
$|\lambda_+|\ge1\ge|\lambda_-|$.
If $A\ne\pm\sqrt{\varepsilon} I$ let
\[\beta_\pm=\beta_\pm(A)=\frac{\lambda_\pm-d}c=\frac{b}{\lambda_\pm-a}
\left(=\frac{a-\lambda_\mp}c=\frac{b}{d-\lambda_\mp}\right)\in\BP^1(\C)
\]
where we take whichever expression is not the indeterminate $0/0$.
\begin{enumerate}
\item\label{giraffe1} If $A=\pm\sqrt{\varepsilon} I$, then $w=z$ for every $z$.
\item\label{giraffe2} If $\left|a+d\right|<2$, $(a+d)\sqrt{\varepsilon}\in\R$,
and $z=\beta_\pm$, then $w=z$.
\item\label{giraffe3} If $A\ne\pm\sqrt{\varepsilon} I$ and
$a+d=\pm2\sqrt{\varepsilon}$, then $w=\beta_+=\beta_-$ for every $z$.
\item\label{giraffe4} Suppose $\left|a+d\right|>2$ or
$(a+d)\sqrt{\varepsilon}\not\in\R$.
If $z=\beta_-$, then $w=\beta_-$.
\item\label{giraffe5} The limit
does not exist if and only if
$\left|a+d\right|<2$, $(a+d)\sqrt{\varepsilon}\in\R$, and $z\ne \beta_\pm$.
\item\label{giraffe6} Suppose $|a+d|>2$ or $(a+d)\sqrt{\varepsilon}\not\in\R$.
If $z\ne \beta_-$, then $w=\beta_+$.
\end{enumerate}
The real dimension count of the choice $(A,z)$ for the six cases is,
respectively, $2,5,6,6,7,8$.
In particular, case \textup{\eqref{giraffe6}} is the generic case.
\end{proposition}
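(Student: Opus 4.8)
The plan is to exploit the conjugation-invariance of the iteration. Since $\overline{A}$ depends only on the projective class of $A$, since $\overline{A}^n=\overline{A^n}$, and since conjugating $A$ by $P$ conjugates the dynamical system $(\BP^1(\C),\overline{A})$ by the homeomorphism $\overline{P}$ — which carries the fixed points $\beta_\pm(A)$, identified by Proposition \ref{returned} with the two eigenlines of $A$, to the fixed points of the conjugate — it suffices to analyze $\overline{A}$ on a Jordan normal form of $A$. Normalizing $\det A=\varepsilon$, the eigenvalues solve $\lambda^2-(a+d)\lambda+\varepsilon=0$, so $\lambda_+\lambda_-=\varepsilon$; the substitution $\mu=\lambda/\sqrt\varepsilon$, $s=(a+d)/\sqrt\varepsilon$ gives $\mu^2-s\mu+1=0$, where $s=\pm(a+d)\sqrt\varepsilon$ (sign depending on $\varepsilon$), so $|s|=|a+d|$ and $s\in\R\iff(a+d)\sqrt\varepsilon\in\R$. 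First I would record the spectral dictionary: (i) $A$ scalar $\iff A=\pm\sqrt\varepsilon\,I$; (ii) $A$ non-scalar with repeated eigenvalue $\iff$ $a+d=\pm2\sqrt\varepsilon$ and $A$ not scalar; (iii) $\lambda_+\ne\lambda_-$ with $|\lambda_+|=|\lambda_-|=1$ $\iff$ $(a+d)\sqrt\varepsilon\in\R$ and $|a+d|<2$; (iv) $|\lambda_+|>1>|\lambda_-|$ $\iff$ $(a+d)\sqrt\varepsilon\notin\R$ or $|a+d|>2$. The only equivalence needing an argument is that distinct eigenvalues lie on the unit circle exactly when $s$ is real with $|s|<2$: if $|\mu_+|=|\mu_-|$ then, since $\mu_+\mu_-=1$, both have modulus $1$, whence $\mu_-=\bar\mu_+$ and $s=\mu_++\mu_-\in\R$; conversely one reads off the location of the roots of $\mu^2-s\mu+1$ for real $s$.

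Then I would dispose of the three normal forms. \textbf{Scalar:} $\overline{A}=\Id$, so $\overline{A}^n(z)=z$ for all $z$, which is case \ref{giraffe1}. \textbf{Diagonalizable with distinct eigenvalues:} conjugate so that $\beta_-\mapsto0$ and $\beta_+\mapsto\infty$ (Proposition \ref{returned} identifies $\vec v_{\beta_\pm}$ with the $\lambda_\pm$-eigenvectors); then $\overline{A}$ becomes $w\mapsto\rho w$ with $\rho=\lambda_+/\lambda_-$, and $\overline{A}^n$ becomes $w\mapsto\rho^n w$. In subcase (iii), $|\rho|=1$ and $\rho\ne1$: the fixed points $0,\infty$ (i.e.\ $\beta_\pm$) stay put, giving \ref{giraffe2}; for $w\notin\{0,\infty\}$ the sequence $\rho^nw$ has constant modulus in $(0,\infty)$, so it converges in $\BP^1$ only if $\rho^n$ converges, hence (any limit $L$ satisfies $\rho L=L$, $|L|=1$) only if $\rho=1$, a contradiction — no limit, which is the "$\Leftarrow$'' half of \ref{giraffe5}. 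In subcase (iv), $|\rho|>1$: $w=0$ (that is $z=\beta_-$) is fixed, giving \ref{giraffe4}, while for $w\ne0$ we get $\rho^nw\to\infty$, i.e.\ $\overline{A}^n(z)\to\beta_+$, giving \ref{giraffe6}. \textbf{Parabolic (non-scalar, repeated eigenvalue):} conjugate the Jordan block so the unique fixed point goes to $\infty$; then $\overline{A}$ is a nonzero translation $w\mapsto w+c$, so $\overline{A}^n(z)\to\infty$, i.e.\ $\overline{A}^n(z)\to\beta_+=\beta_-$ for every $z$, which is \ref{giraffe3}. Finally the "$\Rightarrow$'' half of \ref{giraffe5} follows because the four cases are exhaustive and in each of them except "(iii) with $z\notin\{\beta_\pm\}$'' the limit exists by the above.

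For the dimension count I would parametrize $(A,z)$ by the real $6$-dimensional set $\{A\in M_2(\C):\det A=\varepsilon\}$ (for each of the two discrete values of $\varepsilon$) together with $z\in\BP^1(\C)$ of real dimension $2$, total $8$. Then: scalar matrices form a $0$-dimensional set and $z$ is free, giving $2$; loxodromic matrices are open and dense, hence $6$-dimensional, with $z=\beta_-$ forced (total $6$) or free (total $8$); the parabolic locus $a+d=\pm2\sqrt\varepsilon$ imposes two real conditions, dimension $4$, with $z$ free, giving $6$; the elliptic locus imposes the single real condition $(a+d)\sqrt\varepsilon\in\R$ on an open set, dimension $5$, with $z\in\{\beta_\pm\}$ (total $5$) or $z$ free (total $7$). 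This gives $2,5,6,6,7,8$, and since the ambient space has real dimension $8$, case \ref{giraffe6} is the only dense one, hence generic.

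\textbf{Main obstacle.} The dynamics on each normal form is routine multiplication or translation; the real care is in the spectral dictionary — in particular tracking $\sqrt\varepsilon$ when $\varepsilon=-1$ so that "both eigenvalues on the unit circle'' becomes \emph{exactly} "$(a+d)\sqrt\varepsilon\in\R$ and $|a+d|<2$'', and verifying that the four trace conditions genuinely partition $\GL_2(\C)$ modulo scalars with no overlap or gap (for instance, that $|a+d|=2$ with $(a+d)\sqrt\varepsilon\notin\R$ is loxodromic, not parabolic). I also need to confirm, via Proposition \ref{returned}, that the convention $|\lambda_+|\ge1\ge|\lambda_-|$ is used consistently throughout, so that $\beta_+$ is indeed the attracting fixed point in the loxodromic case and matches the claimed limits in \ref{giraffe4} and \ref{giraffe6}.
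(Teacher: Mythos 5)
Your proof is correct, and its conceptual skeleton (classify $A$ by its eigenvalue configuration, identify $\beta_\pm$ with the eigendirections, note that the elliptic/parabolic/loxodromic trichotomy is detected by $\Tr(A)$ after normalizing by $\sqrt{\varepsilon}$) matches the paper's. Where you diverge is in the mechanics of computing $\lim_n \oA^n(z)$ in the non-elliptic cases: you conjugate $A$ to Jordan normal form and read off the dynamics of $w\mapsto\rho w$ (with $\rho=\lambda_+/\lambda_-$) or $w\mapsto w+c$, whereas the paper keeps $A$ in its original coordinates and uses the Cayley--Hamilton identity $A^n=t_{n-1}A-\varepsilon t_{n-2}I$ with $t_n=(\lambda_+^{n+1}-\lambda_-^{n+1})/(\lambda_+-\lambda_-)$ (or $(n+1)\lambda_+^n$ in the repeated case), so that $\oA^n(z)$ is an explicit M\"obius map whose coefficients converge, with $\varepsilon t_{n-2}/t_{n-1}\to\lambda_-$; the limit map is the rank-one matrix $A-\lambda_- I$, which collapses everything except $\beta_-$ onto $\beta_+$, handling the parabolic and loxodromic cases uniformly. (The paper does conjugate to a rotation for the elliptic case, exactly as you do.) Your route is the cleaner textbook classification of M\"obius transformations; the paper's route buys an explicit formula for $\oA^n(z)$ whose error term $O(1/|\lambda_+/\lambda_-|^{n})$ underlies the convergence-rate estimates quoted in Remark \ref{pudu}\ref{pudu3}, and it reuses the same $t_n$ recursion that already appears in the proof of Proposition \ref{ratify}. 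Your dimension count, which the paper asserts without proof, is a correct and welcome addition. Two very small points of care: in your subcase (iii) the limit $L$ of $\rho^n w$ satisfies $|L|=|w|$ (not necessarily $1$), which is all you need since $L\neq 0,\infty$ forces $\rho=1$ from $\rho L=L$; and your boundary check that $|a+d|=2$ with $(a+d)\sqrt{\varepsilon}\notin\R$ is loxodromic is exactly the verification needed to see that the six cases are exhaustive.
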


\begin{proof}
First note that if $A=\pm\sqrt{\varepsilon} I$,
then $\oA(z)=z$ for every $z$,
so $\lim_{n\to\infty}\oA^n(z)=z.$
So assume $A\ne\pm\sqrt{\varepsilon} I$ for the remainder of the proof,
so that in particular $\beta_\pm$ is defined.
Since $\oA(\beta_\pm)=\beta_\pm$,
\[\lim_{n\to\infty}\oA^n(\beta_\pm)=\beta_\pm.\]

As in the proof of Proposition \ref{ratify}
$A^n=t_{n-1}A-\varepsilon t_{n-2}I$ where
\[t_n=\begin{cases}
\frac{\lambda_+^{n+1}-\lambda_-^{n+1}}{\lambda_+-\lambda_-}&\text{if }
\lambda_+\ne\lambda_-,\\
(n+1)\lambda_+^n&\text{if }\lambda_+=\lambda_-.
\end{cases}\]
Hence if $t_{n-1}\ne0$,
\[\oA^n(z)=\frac{\left(a-\varepsilon\frac{t_{n-2}}{t_{n-1}}\right)z+b}{cz+\left(d-\varepsilon\frac{t_{n-2}}{t_{n-1}}\right)}.\]
If $\left|\lambda_+\right|>1>\left|\lambda_-\right|$ or $\lambda_+=\lambda_-=\pm\sqrt{\varepsilon}$
then
\[
\lim_{n\to\infty}\oA^n(z)=\frac{\left(a-\frac{\varepsilon}{\lambda_+}
\right)z+b}{cz+\left(d-\frac{\varepsilon}{\lambda_+}\right)}
=\frac{\left(a-\lambda_-\right)z+b}{cz+\left(d-\lambda_-\right)}=\beta_+
\]
provided $z\ne(\lambda_--d)/c=b/(\lambda_--a)=\beta_-$.
If $\left|\lambda_+\right|=1=\left|\lambda_-\right|$ but 
$\lambda_+\ne\lambda_-$ then
the map $z\to\oA(z)$
is conjugate to $z\to e^{i\theta}z$ for some $\theta\ne 2\pi m$, and
$\lim_{n\to\infty}e^{ni\theta}z$ does not exist unless $z$ is one of the fixed
points $z=0,\infty$.
\end{proof}

\begin{remark1}
  \begin{enumerate}
\item The indeterminate $0/0$ occurs in the definition of the $\beta_\pm$
if and only if $bc=0$ if and only if one of $\beta_\pm$ is $0$ or $\infty$.
Both expressions for, say, $\beta_+$ are the indeterminate $0/0$ if and only if
$A$ is a multiple of the identity, which is excluded from the definition.
\item Cases \ref{giraffe1} and \ref{giraffe3} correspond to
$\lambda_+=\lambda_-$, Cases \ref{giraffe2} and \ref{giraffe5} to
$|\lambda_+|=|\lambda_-|=1,\lambda_+\ne\lambda_-$, and Cases \ref{giraffe4}
and \ref{giraffe6} to $|\lambda_+|>1>|\lambda_-|$.
In Cases \ref{giraffe4} and \ref{giraffe6}, $\beta_+(A)$ is called the
{\em attractive} point and $\beta_-(A)$ the {\em repulsive} point.
In Case \ref{giraffe3} $\beta_+(A)=\beta_-(A)$ is also called attractive.
In Cases \ref{giraffe2} and \ref{giraffe5} both points are called
{\em indifferent} .
\item An alternate and more direct definition of $\beta_\pm(A)$ is
\[\beta_\pm(A)=\frac{a-d\pm f_{-\varepsilon}(a+d)}{2c}=
  \frac{2b}{d-a\pm f_{-\varepsilon}(a+d)}\in\BP^1(\C)\]
if $A\ne\pm\sqrt{\varepsilon}I$, where we take $\beta_\pm(A)$ to be whichever
expression is not an indeterminate $0/0$,
\[f_\pm(z)=z\sqrt{1\pm4/z^2}\]
for $z\ne0$ with the usual principal value for the square root,
$f_+(0)=2$, and $f_-(0)=2i$.
The key property of $f_\pm$ is that $|z+f_\pm(z)|>2$ everywhere but the branch
cut, proved in Proposition \ref{pronghorn} below.
Note $f_\pm(-z)=-f_\pm(z)$ for $z\ne0$, which is necessary for $\beta_\pm$ to
depend only on $\oA$.
  \end{enumerate}
\end{remark1}


\begin{proposition}
\label{pronghorn}
The function $f_-(z)$ is holomorphic on the open set that is the complement
of the branch cut $-2\le z\le2$ where $z$ is real.
The function $f_+(z)$ is holomorphic on the open set that is the complement
of the branch cut $-2\le iz\le2$ where $z$ is imaginary.
Furthermore $\left|z+f_\pm(z)\right|=2$ on the branch cuts and
$\left|z+f_\pm(z)\right|>2$ everywhere else.

\end{proposition}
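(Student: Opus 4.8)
The plan is to reduce everything to the single function $f_-$ and then recognize $z+f_-(z)$ as (twice) the exterior uniformizer of $\C\setminus[-2,2]$ coming from the Joukowski map.

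First I would record the identity $f_+(z)=-i\,f_-(iz)$, valid for all $z$, including $z=0$ where $-i f_-(0)=-i(2i)=2=f_+(0)$ (for $z\neq0$ it follows from $(iz)^2=-z^2$, so that $1-4/(iz)^2=1+4/z^2$). Since $z\mapsto iz$ is a biholomorphism of $\C$ taking the segment $\{z:\ -2\le iz\le2\}$ onto $[-2,2]$, holomorphy of $f_+$ off its branch cut is equivalent to holomorphy of $f_-$ on $\C\setminus[-2,2]$; and from $i\bigl(z+f_+(z)\bigr)=iz-i^2f_-(iz)=iz+f_-(iz)$ we get $\left|z+f_+(z)\right|=\left|iz+f_-(iz)\right|$, so the two modulus statements for $f_+$ at $z$ are exactly those for $f_-$ at the point $iz$. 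Hence it suffices to treat $f_-$.

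Next I would pin down $f_-$ on $U:=\C\setminus[-2,2]$. The number $1-4/z^2$ is real and $\le0$ precisely when $4/z^2$ is real and $\ge1$, which happens exactly when $z$ is real with $0<|z|\le2$; so for $z\in U$ the argument $1-4/z^2$ stays off the branch cut $(-\infty,0]$ of the principal square root, and $z\neq0$ as well, whence $f_-(z)=z\sqrt{1-4/z^2}$ is holomorphic on $U$. Squaring gives $f_-(z)^2=z^2-4$, and since $\sqrt{1-4/z^2}\to1$ as $z\to\infty$ we have $f_-(z)/z\to1$; thus $f_-$ is precisely the holomorphic branch of $\sqrt{z^2-4}$ on $U$ asymptotic to $z$ at infinity. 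Then I would invoke the classical fact that $\zeta\mapsto\zeta+1/\zeta$ is a biholomorphism of $\{\,|\zeta|>1\,\}$ onto $U$; writing $\zeta=\zeta(z)$ for its holomorphic inverse, we have $(\zeta-1/\zeta)^2=(\zeta+1/\zeta)^2-4=z^2-4$, so $\zeta-1/\zeta=\pm f_-(z)$ with the sign locally constant, hence constant on the connected set $U$, and equal to $+1$ by comparing behavior as $z\to\infty$ (where $\zeta\sim z$ and $\zeta-1/\zeta\sim\zeta$). Therefore $z+f_-(z)=(\zeta+1/\zeta)+(\zeta-1/\zeta)=2\zeta$, giving $\left|z+f_-(z)\right|=2|\zeta|>2$ on $U$. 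On the cut, for $z\in(-2,2)$ one has $f_-(z)^2=z^2-4<0$, so $f_-(z)$ is purely imaginary with $\left|f_-(z)\right|^2=4-z^2$, whence $\left|z+f_-(z)\right|^2=z^2+(4-z^2)=4$; at $z=\pm2$ we have $f_-(\pm2)=0$ and $\left|{\pm2}\right|=2$; and at $z=0$ the stipulated value $f_-(0)=2i$ gives $\left|2i\right|=2$. Transporting back through $f_+(z)=-i f_-(iz)$ yields all the assertions for $f_+$.

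The only delicate point is the branch bookkeeping in the Joukowski step: one must ensure that $z+f_-(z)$ picks out the preimage $2\zeta$ with $|\zeta|>1$ rather than $2/\zeta$ with $|1/\zeta|<1$ — which is exactly why the normalization $f_-(z)\sim z$ at $\infty$, together with connectedness of $U$, is needed. Everything else is a routine computation.
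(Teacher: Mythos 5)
Your proposal is correct, but it takes a genuinely different route from the paper. The paper's proof is a short direct computation: after the substitution $w=2/z$ (so that the two claims for $f_\pm$ become the single statement that $\left|1+\sqrt{1-w^2}\right|\ge\left|w\right|$, with equality exactly when $w$ is real and $w^2\ge1$), it expands $\left|1+\sqrt{1-w^2}\right|^2=1+\left|w^2-1\right|+2\Rea\bigl(\sqrt{1-w^2}\bigr)$ and finishes with the triangle inequality $1+\left|w^2-1\right|\ge\left|w\right|^2$ together with the fact that the principal square root has nonnegative real part; the equality cases of these two inequalities recover the branch cuts. You instead reduce $f_+$ to $f_-$ via the rotation $f_+(z)=-i\,f_-(iz)$ and then identify $z+f_-(z)$ with $2\zeta$, where $\zeta$ is the inverse of the Joukowski biholomorphism $\zeta\mapsto\zeta+1/\zeta$ from $\{\left|\zeta\right|>1\}$ onto $\C\setminus[-2,2]$; the sign ambiguity in $\zeta-1/\zeta=\pm f_-(z)$ is correctly resolved by connectedness plus the normalization $f_-(z)\sim z$ at infinity, which is indeed the one delicate step and you handle it. What each approach buys: the paper's argument is entirely elementary and self-contained (two lines once the algebraic identity is written down), while yours imports the classical mapping property of the Joukowski map but in exchange gives the exact value $\left|z+f_\pm(z)\right|=2\left|\zeta\right|$, hence a quantitative and conceptual explanation of why the modulus exceeds $2$ precisely off the cut. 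Your verification of the holomorphy claims and of the equality $\left|z+f_-(z)\right|=2$ on the cut (splitting into $z\in(-2,2)$, $z=\pm2$, and the stipulated value at $z=0$) is also complete and matches what the paper asserts.
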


\begin{proof}
The holomorphicity and branch cuts follow from the definition of the branch
cut of square root.
The last statement is equivalent to
\[\left|1+\sqrt{1-z^2}\right|\begin{cases}=\left|z\right|
\text{ if }z\text{ is real and }z^2\ge1,\\
>\left|z\right|\text{ otherwise.}
\end{cases}\]
$\left|1+\sqrt{1-z^2}\right|^2=1+\left|z^2-1\right|+2\Rea(\sqrt{1-z^2})$ and the real part of a square
root is always $\ge0$ by definition, so the result now follows from the triangle
inequality.
\end{proof}

\begin{corollary}\label{okapi}
Let $A$ and $z$ be as in Proposition~\textup{\ref{giraffe}},
and $\left[\begin{smallmatrix}e&f\\g&h\end{smallmatrix}\right]=BAB^{-1}$ for some invertible $B$.
Then $\overline{B}(\beta_\pm(A))=\beta_\pm(BAB^{-1})$.
Furthermore, let
\[
w=\lim_{n\to\infty}\overline{B}\oA^n(z)
\]
if the limit exists.
\begin{enumerate}
\item If $A=\pm\sqrt{\varepsilon} I$, then $w=\overline{B}(z)$ for every $z$.
\item If $\left|a+d\right|<2$, $(a+d)\sqrt{\varepsilon}\in\R$, and $z=\beta_\pm(A)$,
then $w=\overline{B}(z)$.
\item If $A\ne\pm\sqrt{\varepsilon} I$ and $a+d=\pm2\sqrt{\varepsilon}$, then
$w=\overline{B}(\beta_+(A))=\overline{B}(\beta_-(A))$ for every $z$.
\item Suppose $\left|a+d\right|>2$ or $(a+d)\sqrt{\varepsilon}\not\in\R$.
If $z=\beta_-(A)$, then $w=\overline{B}(\beta_-(A))$.
\item The limit $\lim_{n\to\infty}\overline{B}\oA^n(z)$ does not exist if and
only if $\left|a+d\right|<2$, \mbox{$(a+d)\sqrt{\varepsilon}\in\R$}, and $z\ne \beta_\pm(A)$.
\item Suppose $\left|a+d\right|>2$ or $(a+d)\sqrt{\varepsilon}\not\in\R$.
If $z\ne \beta_-(A)$, then $w=\overline{B}(\beta_+(A))$.
\end{enumerate}
\end{corollary}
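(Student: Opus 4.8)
The plan is to deduce everything from Proposition \ref{giraffe} by exploiting the fact that the construction is equivariant under conjugation and that Proposition \ref{giraffe} is phrased purely in terms of conjugation-invariant data. Write $A' := BAB^{-1} = \left[\begin{smallmatrix}e&f\\g&h\end{smallmatrix}\right]$ and $z' := \overline{B}(z)$. Since $X \mapsto \overline{X}$ is a homomorphism, $\overline{B}\,\oA^n = \overline{BA^nB^{-1}}\,\overline{B} = (\overline{A'})^n\,\overline{B}$, so
\[
w = \lim_{n\to\infty}\overline{B}\,\oA^n(z) = \lim_{n\to\infty}(\overline{A'})^n(z')
\]
whenever either side exists; that is, $w$ is exactly the limit studied in Proposition \ref{giraffe} for the pair $(A',z')$.

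Next I would establish the identity $\overline{B}(\beta_\pm(A)) = \beta_\pm(A')$. By Proposition \ref{returned}, $\vec{v}_{\beta_\pm(A)}$ is, up to scalars, the eigenvector of $A$ with eigenvalue $\lambda_\pm$, so $B\,\vec{v}_{\beta_\pm(A)}$ is an eigenvector of $A'=BAB^{-1}$ with the same eigenvalue; passing to $\BP^1$ this reads $\overline{B}(\beta_\pm(A)) = \beta_\pm(A')$, provided the labels $\pm$ attached to the two eigenvalues agree for $A$ and $A'$. That is automatic, since $A$ and $A'$ have the same characteristic polynomial, hence the same multi-set $\{\lambda_+,\lambda_-\}$, and the normalization $|\lambda_+| \ge 1 \ge |\lambda_-|$ depends only on that multi-set. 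The one delicate point is the degenerate case $\Tr A = 0$, where the listed expressions for $\beta_\pm$ carry a sign ambiguity; but there the two roots are genuinely interchangeable, and this case feeds only into part \ref{giraffe5}, where no limit exists, so nothing is lost. Running the same argument with $B^{-1}$ in place of $B$ shows that $\overline{B}$ is a bijection $\{\beta_+(A),\beta_-(A)\} \to \{\beta_+(A'),\beta_-(A')\}$ matching $+$ with $+$ and $-$ with $-$.

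Finally I would apply Proposition \ref{giraffe} to $(A',z')$ and transcribe each of its six cases. Because $\Tr A' = \Tr A$ and $\det A' = \det A = \varepsilon$, every hypothesis on $a+d$ passes verbatim to the corresponding hypothesis on $e+h$ (for instance $|e+h| = |a+d|$, $(e+h)\sqrt{\varepsilon} = (a+d)\sqrt{\varepsilon}$, and $e+h = \pm 2\sqrt{\varepsilon} \iff a+d = \pm 2\sqrt{\varepsilon}$), and $A' = \pm\sqrt{\varepsilon}\,I \iff A = \pm\sqrt{\varepsilon}\,I$. Likewise, by the previous paragraph and the bijectivity of $\overline{B}$, the conditions $z' = \beta_\pm(A')$, $z' = \beta_-(A')$, $z' \ne \beta_-(A')$ are equivalent to $z = \beta_\pm(A)$, $z = \beta_-(A)$, $z \ne \beta_-(A)$. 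Substituting, parts \ref{giraffe1}--\ref{giraffe6} of Proposition \ref{giraffe} for $(A',z')$ give $w = z' = \overline{B}(z)$ in the first two cases, $w = \beta_+(A') = \beta_-(A') = \overline{B}(\beta_+(A))$ in the parabolic case, $w = \beta_-(A') = \overline{B}(\beta_-(A))$ and non-existence in the two cases \ref{giraffe4} and \ref{giraffe5}, and $w = \beta_+(A') = \overline{B}(\beta_+(A))$ in the generic case \ref{giraffe6} — which are precisely the six assertions of the corollary. I do not expect a genuine obstacle here: the argument is pure bookkeeping, with the only subtlety being the $\Tr A = 0$ sign issue already handled above.
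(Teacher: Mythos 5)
Your proposal is correct and follows essentially the same route as the paper, whose entire proof is the observation that $\overline{B}\oA^n(z)=(\overline{BAB^{-1}})^n(\overline{B}z)$ followed by an application of Proposition~\ref{giraffe}. You simply make explicit the bookkeeping the paper leaves implicit (the conjugation-invariance of the trace and determinant, the eigenvector argument for $\overline{B}(\beta_\pm(A))=\beta_\pm(BAB^{-1})$, and the harmlessness of the labeling ambiguity in the elliptic case).
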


\begin{proof}
 Simply note that
$\overline{B}\oA^n(z)=(\overline{BAB^{-1}})^n(\overline{B}z)$
and apply Proposition~\ref{giraffe}.
\end{proof}

\begin{corollary}\label{brocket}
With notation as in Corollary \textup{\ref{okapi}}, let
\[w=\lim_{n\to\infty}\overline{B}\oA^n(\infty)\]
if the limit exists.
\begin{enumerate}
\item If $A=\pm\sqrt{\varepsilon} I$, then $w=\overline{B}(\infty)$.\label{brocket1}
\item If $\left|a+d\right|<2$, $(a+d)\sqrt{\varepsilon}\in\R$, and $c=0$,
then $w=\overline{B}(\infty)$.\label{brocket2}
\item If $A\ne\pm\sqrt{\varepsilon} I$ and $a+d=\pm2\sqrt{\varepsilon}$, then
$w=\overline{B}(\beta_+(A))=\overline{B}(\beta_-(A))$.\label{brocket3}
\item Suppose $\left|a+d\right|>2$ or $(a+d)\sqrt{\varepsilon}\not\in\R$.
If $c=0$ and $\left|a\right|<1<\left|d\right|$, 
then $w=\overline{B}(\infty)$.\label{brocket4}
\item The limit $\lim_{n\to\infty}\overline{B}\oA^n(\infty)$ does not exist if and only if
$\left|a+d\right|<2$, $(a+d)\sqrt{\varepsilon}\in\R$, and 
$c\ne0$.\label{brocket5}
\item Suppose $\left|a+d\right|>2$ or $(a+d)\sqrt{\varepsilon}\not\in\R$.
If $c\ne0$ or $\left|a\right|>1>\left|d\right|$, 
then $w=\overline{B}(\beta_+(A))$.\label{brocket6}\hfill\qed
\end{enumerate}
\end{corollary}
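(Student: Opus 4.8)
The plan is to deduce Corollary~\ref{brocket} directly from Corollary~\ref{okapi} by taking $z = \infty$ and rewriting the hypotheses, which there are phrased in terms of the fixed points $\beta_\pm(A)$, as the entrywise conditions on $A$ stated here. The only computational input is the behaviour of $\overline A$ at $\infty$: since $A\vec{v}_\infty$ represents $a/c \in \BP^1(\C)$, the point $\infty$ is fixed by $\overline A$ precisely when $c = 0$, and in that case (assuming $A \ne \pm\sqrt{\varepsilon}\,I$) the eigenvector $\vec{v}_\infty$ has eigenvalue $a$, the other fixed point has eigenvalue $d$, and $ad = \varepsilon$ since $bc = 0$.

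First I would dispose of the degenerate cases. Case~\ref{brocket1} is immediate because $\overline A = \mathrm{id}$. Case~\ref{brocket3} is immediate from Corollary~\ref{okapi}(3), whose conclusion holds for every $z$, in particular $z = \infty$. For the remaining cases, note that the elliptic hypothesis of~\ref{brocket2} and~\ref{brocket5} and the loxodromic hypothesis of~\ref{brocket4} and~\ref{brocket6} each exclude $A = \pm\sqrt{\varepsilon}\,I$ (which has $c = 0$ and $|a+d| = 2$), so $\beta_\pm(A)$ is defined throughout.

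Next I would handle the elliptic regime $|a+d| < 2$, $(a+d)\sqrt{\varepsilon} \in \R$. Here the two fixed points $\beta_+(A) \ne \beta_-(A)$ are distinct, so $\infty \in \{\beta_+(A), \beta_-(A)\}$ if and only if $\infty$ is a fixed point if and only if $c = 0$. Thus the hypothesis $c = 0$ in~\ref{brocket2} is exactly ``$z = \beta_\pm(A)$'' and the hypothesis $c \ne 0$ in~\ref{brocket5} is exactly ``$z \ne \beta_\pm(A)$'', so~\ref{brocket2} and~\ref{brocket5} follow from Corollary~\ref{okapi}(2) and~(5) with $z = \infty$. Finally, in the loxodromic regime $|a+d| > 2$ or $(a+d)\sqrt{\varepsilon} \notin \R$, we have $\infty = \beta_-(A)$ iff $\infty$ is the fixed point with the smaller eigenvalue, i.e.\ $c = 0$ and $|a| < |d|$; since $c = 0$ forces $|a|\,|d| = 1$ while the loxodromic hypothesis forbids $|a| = |d| = 1$, this is the same as $c = 0$ and $|a| < 1 < |d|$, and correspondingly $\infty \ne \beta_-(A)$ iff $c \ne 0$ or $|a| > 1 > |d|$. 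Hence~\ref{brocket4} and~\ref{brocket6} follow from Corollary~\ref{okapi}(4) and~(6) with $z = \infty$, the value $\overline B(\beta_+(A))$ in~\ref{brocket6} already being in the desired form. There is no genuine obstacle; the only point requiring care is the bookkeeping of which of $\beta_\pm(A)$ equals $\infty$ when $c = 0$, which is settled by the eigenvalue computation above.
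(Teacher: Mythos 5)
Your proposal is correct and follows exactly the route the paper intends: the paper states Corollary~\ref{brocket} with no proof at all (just a \verb|\qed|), treating it as the specialization $z=\infty$ of Corollary~\ref{okapi}, and your translation of the conditions on $z$ versus $\beta_\pm(A)$ into the entrywise conditions on $A$ (in particular that $\infty$ is fixed iff $c=0$, with eigenvalue $a$, so that $\infty=\beta_-$ iff $c=0$ and $|a|<1<|d|$) is precisely the bookkeeping being left to the reader. Nothing is missing.
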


 We apply Corollary \ref{brocket} to
$B=B_j=M([b_1,\ldots, b_N, a_1, \ldots  a_{j}])$ and
\[A=A_j=M([a_{j+1},\ldots,a_{k},a_{1},\ldots,a_{j}])\text{ for }j=0,\ldots,k-1.\]
By periodicity, $B_0^{-1}B_jA_j=A_0B_0^{-1}B_j$, and thus
$B_jA_jB_j^{-1}=B_0A_0B_0^{-1}=E$ by Equation \eqref{fed1}.

\renewcommand{\thesection}{\arabic{section}}
\setcounter{section}{4}
\setcounter{theorem}{2}
\begin{theorem}

  \label{punchy}
Let $P=\PP$ be
a PCF.
Then the value $\ob(P)$ exists if and only if \textbf{none} of
the following three conditions is satisfied:
\begin{enumerate}
\item\label{bactrian1'}
$E(P)=\pm i^k I$.
\item\label{bactrian2'}
With the $a_i$ periodic as in \eqref{periodic}, 
\[
M([a_{j+1},\ldots,a_{k+j}])_{21}=0\mbox{ and }
\left|M([a_{j+1},\ldots,a_{k+j}])_{22}\right|>1
\]
for some $j=0,1,\ldots,k-1$.
\item\label{bactrian3'}
$\Tr(E(P))^2\in\R$ and $0\le(-1)^k\Tr(E(P))^2<4$.
\end{enumerate}
If it converges, the value $\ob(P)=\beta_+(E(P))$.
\end{theorem}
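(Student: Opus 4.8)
The plan is to reduce the convergence of $P$ to that of its $k$ subsequences of convergents indexed by residue mod~$k$, and then to read off each subsequence from Corollary~\ref{brocket}. Keeping the notation $B_j=M([b_1,\ldots,b_N,a_1,\ldots,a_j])$, $A_j=M([a_{j+1},\ldots,a_{k+j}])$ introduced just before the theorem (so $B_jA_jB_j^{-1}=E(P)=:E$, and $A_j$ is conjugate to $A_0=M([a_1,\ldots,a_k])$ by $M([a_1,\ldots,a_j])$), multiplicativity of $M$ on concatenations gives $M([c_1,\ldots,c_{N+nk+j}])=B_0A_0^{\,n}M([a_1,\ldots,a_j])=B_jA_j^{\,n}$, so the $(N+nk+j)$-th convergent equals $\overline{B_j}\,\overline{A_j}^{\,n}(\infty)$. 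Since $\{N+nk+j:n\ge0,\ 0\le j\le k-1\}$ is all of $\{N,N+1,\dots\}$, the value $\ob(P)$ exists iff each of the $k$ sequences $\bigl(\overline{B_j}\,\overline{A_j}^{\,n}(\infty)\bigr)_{n\ge0}$ converges and the limits coincide. Throughout, $\det A_j=(-1)^k=:\varepsilon$ and $\Tr A_j=\Tr E$, so the alternatives of Corollary~\ref{brocket} depend only on~$E$, while $\overline{B_j}(\beta_\pm(A_j))=\beta_\pm(B_jA_jB_j^{-1})=\beta_\pm(E)$ by the first assertion of Corollary~\ref{okapi}.

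Next I would split according to the conjugacy type of~$E$. \emph{Scalar type}, $E=\pm\sqrt\varepsilon I=\pm i^kI$ (this is condition~\ref{bactrian1'}; it forces $k\ge2$ since $V_{N,1}=\emptyset$ by Proposition~\ref{grinder}\ref{grinder2}): each $A_j=\pm i^kI$, so the $j$-th subsequence is the constant $\overline{B_j}(\infty)$, and these are not all equal because $\overline{B_{j+1}}(\infty)=\overline{B_j}(a_{j+1})\ne\overline{B_j}(\infty)$ ($a_{j+1}\ne\infty$, $\overline{B_j}$ bijective); hence $\ob(P)$ does not exist, consistently with $\beta_+(E)$ being undefined. \emph{Parabolic type}, $\Tr E=\pm2\sqrt\varepsilon$: Corollary~\ref{brocket}\ref{brocket3} sends every subsequence to $\overline{B_j}(\beta_+(A_j))=\beta_+(E)$, so $\ob(P)=\beta_+(E)$; and none of the three conditions can hold (the eigenvalues of $A_j$ lie on the unit circle, ruling out~\ref{bactrian2'}, and $|\Tr E|=2$ rules out~\ref{bactrian3'}). \emph{Elliptic type}, $|\Tr E|<2$ and $(\Tr E)\sqrt\varepsilon\in\mathbf R$ — which, by an elementary computation, is the same as $\Tr(E)^2\in\mathbf R$ and $0\le(-1)^k\Tr(E)^2<4$, i.e.\ condition~\ref{bactrian3'}: here Corollary~\ref{brocket}\ref{brocket5} makes the $j$-th subsequence diverge once $(A_j)_{21}\ne0$, which, by the sublemma below, occurs for some~$j$; so $\ob(P)$ does not exist, as it should. \emph{Loxodromic type} (all remaining~$E$, for which $\beta_+(E)\ne\beta_-(E)$): for each~$j$, Corollary~\ref{brocket}\ref{brocket4} and~\ref{brocket6} give that the $j$-th subsequence converges, to $\overline{B_j}(\infty)=\overline{B_j}(\beta_-(A_j))=\beta_-(E)$ when $(A_j)_{21}=0$ and $|(A_j)_{22}|>1$ — which, as $|(A_j)_{11}(A_j)_{22}|=1$, is precisely condition~\ref{bactrian2'} at~$j$ — and to $\overline{B_j}(\beta_+(A_j))=\beta_+(E)$ otherwise. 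Thus $\ob(P)$ exists iff condition~\ref{bactrian2'} holds at every~$j$ or at no~$j$; the first possibility is ruled out by the sublemma, so $\ob(P)$ exists iff~\ref{bactrian2'} fails, and then $\ob(P)=\beta_+(E)$. Assembling the four types yields exactly the asserted equivalence and the formula $\ob(P)=\beta_+(E(P))$.

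The sublemma is: if $E$ is not scalar then $(A_j)_{21}\ne0$ for at least one~$j$, and one cannot have $(A_j)_{21}=0$ and $|(A_j)_{22}|>1$ simultaneously for every~$j$. To prove it I would use the recursion $A_{j+1}=D(a_{j+1})^{-1}A_jD(a_{j+1})$ (immediate from periodicity): if $A_j=\left[\begin{smallmatrix}p&q\\0&r\end{smallmatrix}\right]$, a direct computation gives $A_{j+1}=\left[\begin{smallmatrix}r&0\\ a_{j+1}(p-r)+q&p\end{smallmatrix}\right]$. So if all $A_j$ were upper triangular, each $A_{j+1}$ would automatically have zero $(1,2)$-entry, hence all $A_j$ would be diagonal; then vanishing of the $(2,1)$-entries forces $a_{j+1}((A_j)_{11}-(A_j)_{22})=0$ for all~$j$, and since $(A_j)_{11}=(A_j)_{22}$ would force $A_j=\pm i^kI$ (because $\det A_j=(-1)^k$) and hence $E$ scalar, we conclude $a_{j+1}=0$ for all~$j$; but then $A_j=t^k$, which is $\pm I$ (scalar, excluded) when $k$ is even and $t$ (not upper triangular) when $k$ is odd — a contradiction. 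The ``moreover'' clause is then immediate, since a scalar~$E$ would give $|(A_j)_{22}|=1$.

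I expect the main obstacle to be this sublemma — i.e.\ excluding the ``conspiracy'' in which the $k$ residue subsequences all tend to $\beta_-(E)$ instead of $\beta_+(E)$ — together with the bookkeeping needed to line up each of the six cases of Corollary~\ref{brocket} with the correct one of the three conditions, in particular the elliptic reformulation and keeping track of which of $\beta_\pm(A_j)$ the point~$\infty$ represents in the degenerate configurations.
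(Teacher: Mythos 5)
Your proof is correct, and its architecture --- splitting the convergents into the $k$ residue-class subsequences $\overline{B_j}\,\overline{A_j}^{\,n}(\infty)$, invoking Corollary~\ref{brocket} for each $j$, and sorting by the conjugacy type of $E$ --- is exactly the paper's. The one place you genuinely diverge is in ruling out the ``conspiracy'' in which condition~\ref{bactrian2'} holds at every $j$ (and the analogous all-$j$ scenarios in the scalar and elliptic types): you prove an algebraic sublemma, via the recursion $A_{j+1}=D(a_{j+1})^{-1}A_jD(a_{j+1})$, showing that the $A_j$ cannot all be upper triangular unless $E$ is scalar. That sublemma and its proof are correct, but the paper gets all three exclusions from the single observation you already use in your scalar case: consecutive convergents are distinct, i.e.\ $\overline{B_j}(\infty)\ne\overline{B_{j+1}}(\infty)$. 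Indeed, whenever $(A_j)_{21}=0$ the $j$-th subsequence is \emph{constant} at $\overline{B_j}(\infty)$; so if condition~\ref{bactrian2'} held at two consecutive indices $j$ and $j+1$, both constants would equal $\beta_-(E)$, contradicting $\overline{B_j}(\infty)\ne\overline{B_{j+1}}(\infty)$, and the same contradiction disposes of ``all $j$ in Case~\ref{brocket2}'' in the elliptic type. Your route buys a purely algebraic fact about the matrices $A_j$, independent of any convergence considerations, at the cost of a matrix computation and a small case analysis; the paper's is a one-line dynamical observation. Either yields a complete proof, and your bookkeeping matching the six cases of Corollary~\ref{brocket} to the three conditions is accurate throughout.
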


\begin{proof}
This is just an application of Corollary \ref{brocket} to $B_j$ and $A_j$,
for $j=0,\ldots,k-1$,
with the complication that the limit for each $j$ must be the same.
As noted above the $A_j$ are all in the same conjugacy class,
which determines whether we're in one of four subsets of cases of
Corollary~\ref{brocket}:
Case~\ref{brocket1},
Cases~\ref{brocket2}/\ref{brocket5},
Case~\ref{brocket3}, or Cases~\ref{brocket4}/\ref{brocket6}.
Because consecutive convergents cannot be equal,
$\overline{B_j}(\infty)\ne \overline{B_{j+1}}(\infty)$,
and therefore Case \ref{brocket1} (Theorem \ref{punchy}\ref{bactrian1'}),
Case~\ref{brocket4} (Theorem \ref{punchy}\ref{bactrian2'}, which has to be checked for each $j$ since
it is not conjugation invariant), and
Cases \ref{brocket2}/\ref{brocket5} (Theorem \ref{punchy}\ref{bactrian3'}) are excluded.
So if the limit exists,
we have to be in Case \ref{brocket3} or \ref{brocket6},
and the limit is $\beta_+(E)=\overline{B_j}(\beta_+(A_j))$ for every~$j$.
\end{proof}
\noindent If Theorem \ref{punchy}\ref{bactrian2'} is satisfied, but not \ref{bactrian1'}
and \ref{bactrian3'}, then for each $j\bmod k$ the limit
\[\lim_{n\to\infty}[b_1,\ldots, b_N, a_1, \ldots, a_{j+nk}]\]
exists, and most are the same, but there is at least one pariah $j$ for which
the limit exists but is different from the others.
\renewcommand{\thesection}{\Alph{section}}
\setcounter{section}{1}
\setcounter{theorem}{5}
\begin{corollary}\label{hungry}
   Let $P^*=[b_1, \ldots, b_{N},0,
    \overline{-a_{k},\ldots,-a_{2}, -a_{1}}]$ be the PCF
   dual to $P$.  Then the value $\ob(P^*)$ exists if
  and only if \textbf{none} of the following three conditions is
  satisfied:
\begin{enumerate}
\item
$E(P)=E(P^*)^{-1}=\pm\sqrt{(-1)^k}I$.
\item
With the $a_i$ periodic as in \eqref{periodic},
\[
M([a_{j+1},\ldots,a_{k+j}])_{21}=0\mbox{ and }
\left|M([a_{j+1},\ldots,a_{k+j}])_{22}\right|<1
\]
for some $j=0,1,\ldots,k-1$.
\item
$\Tr(E(P))^2\in\R$ and $0\le(-1)^k\Tr(E(P))^2<4$.
\end{enumerate}
If it converges, the value $\ob(P)=\beta_-(E(P))=\beta_+(E(P^*))$.
\end{corollary}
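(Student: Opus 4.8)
The plan is to apply Theorem~\ref{punchy} directly to $P^*$ --- which is itself a PCF, of type $(N+1,k)$ or, when $N>0$, of type $(N,k)$ after absorbing the interior $0$ via Lemma~\ref{caribou} --- and then to rewrite each of the three conditions it produces, and the formula for the limit, in terms of $P$. The one ingredient that does all the translating is the identity $E(P^*)=E(P)^{-1}$ from \eqref{sitatunga}, together with $\det E(P)=(-1)^k$. Applied to $P^*$, Theorem~\ref{punchy} says $\ob(P^*)$ exists iff none of the following holds: (i) $E(P^*)=\pm i^kI$; (ii) INEQ~\ref{grits} for the partial quotients of $P^*$; (iii) $\Tr(E(P^*))^2\in\R$ and $0\le(-1)^k\Tr(E(P^*))^2<4$; and if it converges then $\ob(P^*)=\beta_+(E(P^*))$.

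Conditions (i) and (iii) translate immediately. Inversion permutes the set $\{\pm i^kI\}=\{\pm\sqrt{(-1)^k}\,I\}$, so (i) is equivalent to $E(P)=\pm\sqrt{(-1)^k}\,I$. By Cayley--Hamilton $E(P)^{-1}=(-1)^k\bigl(\Tr(E(P))\,I-E(P)\bigr)$, whence $\Tr(E(P^*))=(-1)^k\Tr(E(P))$ and so $\Tr(E(P^*))^2=\Tr(E(P))^2$; thus (iii) is verbatim the third condition stated in the corollary.

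The substance is condition (ii). Write the period of $P^*$ as $(\tilde a_1,\dots,\tilde a_k)$ with $\tilde a_\ell=-a_{k+1-\ell}$, made periodic as in \eqref{periodic}, and put $\tilde A_j=M([\tilde a_{j+1},\dots,\tilde a_{k+j}])$ and $A_i=M([a_{i+1},\dots,a_{k+i}])$ for the period-windows of $P^*$ and of $P$, respectively. Using $D(-\alpha)=tD(\alpha)^{-1}t$ from \eqref{goon}, collapsing adjacent factors $t^2=I$, reversing the order of a product under inversion, and applying periodicity of the $a_i$, one computes $\tilde A_j=t\,A_{k-j}^{-1}\,t$; since $\det A_{k-j}=(-1)^k$, conjugating $A_{k-j}^{-1}$ by $t$ yields $(\tilde A_j)_{21}=-(-1)^k(A_{k-j})_{12}$ and $(\tilde A_j)_{22}=(-1)^k(A_{k-j})_{22}$, so INEQ~\ref{grits} for $P^*$ says exactly that $(A_i)_{12}=0$ and $|(A_i)_{22}|>1$ for some $i$. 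To see that this coincides with the second condition of the corollary, I use the conjugacy $A_{i+1}=D(a_{i+1})^{-1}A_iD(a_{i+1})$ between consecutive period-windows of $P$: a one-line matrix multiplication gives $(A_{i+1})_{12}=(A_i)_{21}$ and, whenever this common entry vanishes, $(A_{i+1})_{22}=(A_i)_{11}=(-1)^k/(A_i)_{22}$. Reindexing $i\mapsto i+1$ then converts ``$(A_i)_{12}=0$ and $|(A_i)_{22}|>1$ for some $i$'' into ``$(A_j)_{21}=0$ and $|(A_j)_{22}|<1$ for some $j$'', which is precisely the second condition in the corollary.

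For the limit, the eigenvalues of $E(P^*)=E(P)^{-1}$ are the reciprocals of those of $E(P)$ and the eigenvectors are unchanged (Proposition~\ref{returned}), so inversion interchanges the roles of $\lambda_+$ and $\lambda_-$; hence $\vec v_{\beta_-(E(P))}$ is the eigenvector of $E(P^*)$ belonging to its eigenvalue of modulus $\ge1$, which means $\beta_+(E(P^*))=\beta_-(E(P))$ (in the parabolic and indifferent cases both fixed points coincide, so there is nothing to check). I expect the bookkeeping in condition (ii) to be the main obstacle: tracking the negation, the cyclic reversal, and the $t$-conjugation through the two families of period-windows is routine but error-prone, and one must be careful that ``$\{12\}$-entry $0$, $\{22\}$-modulus $>1$'' passes to ``$\{21\}$-entry $0$, $\{22\}$-modulus $<1$'' window by window after a single index shift, rather than only up to the fixed-point degeneracies already present in conditions (i) and (iii).
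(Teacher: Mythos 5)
Your proof is correct and takes essentially the same route as the paper: apply Theorem~\ref{punchy} to $P^*$ and translate every condition through $E(P^*)=E(P)^{-1}$ and $\det E(P)=(-1)^k$. Your explicit bookkeeping for condition (ii) --- the identity $\tilde A_j=tA_{k-j}^{-1}t$ followed by the shift conjugacy $A_{i+1}=D(a_{i+1})^{-1}A_iD(a_{i+1})$ --- is just a fully worked-out version of what the paper compresses into the single observation that $(A_j)_{21}=0$ forces $(A_j^{-1})_{11}=(-1)^k(A_j)_{22}$ and $(A_j^{-1})_{22}=(-1)^k(A_j)_{11}$.
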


\begin{proof}
 By \eqref{sitatunga} $E(P^*)=E(P)^{-1}$, and hence
$\Tr(E(P^*))=(-1)^k\Tr(E(P^*))$.
If $(A_j)_{21}=0$, then
$(A_j^{-1})_{11}=(-1)^k(A_j)_{22}$ and
$(A_j^{-1})_{22}=(-1)^k(A_j)_{11}$.
\end{proof}
\begin{remark1}
Notice that the conditions in Corollary \ref{hungry} are identical to
the conditions in Theorem \ref{punchy} except that the inequality
in Theorem \ref{punchy}\ref{bactrian2'} is reversed.
\end{remark1}

\bibliographystyle{plain}
\bibliography{PCFCa}
\end{document}